\documentclass{amsart}

\usepackage{latexsym,amssymb,amsmath,enumitem,verbatim,todonotes,xurl,amsthm,nccmath,mathtools,mathrsfs}
\usepackage{thmtools}
\usepackage[hidelinks]{hyperref}
\usepackage[capitalise,nameinlink]{cleveref}

\makeatletter
\@namedef{subjclassname@2020}{
\textup{2020} Mathematics Subject Classification}
\makeatother

\title[Class Symmetric Systems]{A General Theory of Class Symmetric Systems}

\DeclareMathOperator{\Ord}{Ord}

\DeclareMathOperator{\dom}{dom}
\DeclareMathOperator{\id}{id}

\DeclareMathOperator{\1}{\mathsf{1}}

\DeclareMathOperator{\sym}{sym}
\DeclareMathOperator{\fix}{fix}
\DeclareMathOperator{\R}{\textit{R}}
\DeclareMathOperator{\forces}{\Vdash}
\DeclareMathOperator{\witforces}{\forces_{\!\!\ast}}
\DeclareMathOperator{\atforces}{\forces_0}
\DeclareMathOperator{\Sforces}{\forces_{\!\SSS}}
\DeclareMathOperator{\Pforces}{\forces_{\PP}}
\DeclareMathOperator{\notatforces}{\not{\!\forces}_0}
\DeclareMathOperator{\notSforces}{\not{\!\forces}_{\!\SSS}}
\DeclareMathOperator{\sforces}{\forces_{\!\SSS}^\ast}
\DeclareMathOperator{\pforces}{\forces_{\PP}^\ast}
\DeclareMathOperator{\Gforces}{\forces_{\!\SSS}^{\!\vec\Gamma}}

\newcommand{\ZFC}{\mathrm{ZFC}}
\newcommand{\ZF}{\mathrm{ZF}}
\newcommand{\GB}{\mathrm{GB}}

\newcommand{\PP}{\mathbb P}
\newcommand{\QQ}{\mathbb Q}

\newcommand{\C}{\mathcal C}
\newcommand{\G}{\mathcal G}
\newcommand{\F}{\mathcal F}
\newcommand{\SSS}{\mathcal S}
\newcommand{\MM}{\mathcal M}

\newcommand{\HS}{\mathrm{HS}}
\newcommand{\MMGS}{\MM[G]_\SSS}
\newcommand{\ifff}{\leftrightarrow}
\newcommand{\iffq}{\quad\iff\quad}
\newcommand{\concatt}{\mathbin{\raisebox{1ex}{\scalebox{.7}{$\!\frown\!\!$}}}}
\newcommand{\vecdot}[1]{\vec{\dot #1}}

\def\<#1>{\langle#1\rangle}
\newbox\gnBoxA\newdimen\gnCornerHgt\setbox\gnBoxA=\hbox{$\ulcorner$}\global\gnCornerHgt=\ht\gnBoxA\newdimen\gnArgHgt\def\Godelnum#1{\setbox\gnBoxA=\hbox{$#1$}\gnArgHgt=\ht\gnBoxA\ifnum\gnArgHgt<\gnCornerHgt\gnArgHgt=0pt\else\advance\gnArgHgt by -\gnCornerHgt\fi\raise\gnArgHgt\hbox{$\ulcorner$}\box\gnBoxA\raise\gnArgHgt\hbox{$\urcorner$}}

\theoremstyle{definition}
\newtheorem{fact}{Fact}
\newtheorem{lemma}[fact]{Lemma}
\newtheorem{theorem}[fact]{Theorem}
\newtheorem{corollary}[fact]{Corollary}
\newtheorem{claim}[fact]{Claim}

\newtheorem{observation}[fact]{Observation}
\newtheorem{proposition}[fact]{Proposition}

\newtheorem{question}[fact]{Question}

\newtheorem{definition}[fact]{Definition}

\author{Peter Holy}
\address{Institut f\"ur diskrete Mathematik und Geometrie\\
TU Wien\\
Wiedner Hauptstrasse 8-10/104\\
1040 Vienna\\
Austria}
\email{peter.holy@tuwien.ac.at}

\author{Emma Palmer}
\address{Mathematical Institute,
University of Oxford,
Andrew Wiles Building,
Radcliffe Observatory Quarter (550),
Woodstock Road,
Oxford,
OX2 6GG,
United Kingdom\\
ORCiD: 0000-0003-0001-5029}
\email{emma.palmer@hertford.ox.ac.uk}

\author{Jonathan Schilhan}
\address{University of Vienna,
Institute of Mathematics,
Kurt G\"odel Research Center,
Kolingasse 14-16,
1090 Vienna,
Austria\\
ORCiD: 0000-0001-6696-1603}
\email{jonathan.schilhan@univie.ac.at}

\subjclass[2020]{03E25,03E70}
\keywords{Class symmetric systems, Forcing Theorem, Pretameness, Tameness}

\thanks{}

\begin{document}

\begin{abstract}
We develop a general theory for class-sized symmetric systems as a natural extension of symmetric systems with respect to class forcing. In particular, adapting the usual notions of pretameness and tameness for class forcing, we present sufficient conditions for the preservation of the axioms of G\"odel-Bernays set theory (without the axiom of choice), and for the forcing theorem to hold for class-sized symmetric systems.
\end{abstract}

\maketitle

\newcommand{\chkfront}{\scalebox{1.6}[1.0]{$\vee$}}

\section{Introduction and Basic Definitions}

Class-sized symmetric systems have been used to verify a number of important set theoretic results, for instance in the construction of the Gitik model \cite{gitik1980}, in which all uncountable cardinals are singular. Other important examples are given by the Morris model \cite{Morris} (see also \cite{Karagila2020}) or the Bristol model \cite{bristol}, \cite{KARAGILA2026}. The basic properties of these symmetric systems -- whether they satisfy the forcing theorem and preserve the relevant axioms -- have been established on an ad hoc case-by-case basis for these particular systems. The main point of our paper is to provide a general theory of class-sized symmetric systems: we provide useful sufficient conditions, which are based around the well-known notions of pretameness and tameness for class forcing, for when class-sized symmetric systems admit forcing relations and satisfy the truth lemma (these properties are commonly jointly referred to as the \emph{forcing theorem}), and for when they preserve the axioms of G\"odel-Bernays set theory (except for the axiom of choice).
Our results also provide a possible answer to the second question in \cite[Question 7.14]{MATTHEWSpaper}, where Richard Matthews asks for a combinatorial condition on class-sized symmetric systems that ensures the preservation of the axioms of G\"odel-Bernays set theory other than the axiom of choice and the powerset axiom.

\bigskip

We denote G\"odel-Bernays set theory without the axiom of choice by $\GB$, and unless otherwise noted, this will be our base theory in this paper. We present $\GB$ as a two-sorted theory\footnote{Alternatively, $\GB$ can be viewed as a one-sorted theory, where the sets are exactly those classes which are elements of other classes.}, that talks about a domain of sets and a domain of classes; we follow the convention that set variables are given by lowercase letters $x,y,a,b\ldots$ while class variables are given by uppercase letters $X,Y,A,B\dots$.

\medskip

The axioms of $\GB$ are:
\begin{itemize}
\item The following set axioms: Extensionality, Regularity, Pairing, Union, Infinity, and Powerset.
\item Extensionality for classes: for any classes $A,B$, if $(\forall x)(x\in A\ifff x\in B)$, then $A=B$.
\item Every set represents a class: for any set $a$, there is a class $A$ such that $(\forall x)(x\in a\;\ifff\;x\in A)$.
\item Separation and Collection, articulated as \emph{single} axioms:
\begin{itemize}
\item For any set $a$ and class $A$, there is a set $b$ which is their intersection: $(\forall x)(x\in b\;\ifff\;x\in a\land x\in A)$.
\item For any set $a$ and class relation $R$ such that $(\forall x\in a)(\exists y)(\<x,y>\in R)$, there is a set $b$ such that $(\forall x\in a)(\exists y\in b)(\<x,y>\in R)$.
\end{itemize}
\item The Class Existence axioms: Membership, Intersection, Complement, Domain, Product by $V$, Circular permutation, and Transposition\footnote{Some examples: Product by $V$ is $(\forall X\exists Y)(\forall x)(x\in Y\ifff(\exists y\exists z)(x=\<y,z>\land y\in X))$ and Transposition is $(\forall X\exists Y)(\forall x\forall y\forall z)(\<x,y,z>\in Y\ifff\<x,z,y>\in X)$.}. Together, these are equivalent with first-order class comprehension (over Extensionality, Regularity, and Pairing): given a first-order formula\footnote{I.e., a formula with no second-order quantifiers. In this paper, first-order formulas can include atomic formulas of the form $x\in X$ and $X=Y$ (where $X,Y$ are class variables) as sub-formulas.} $\varphi$ with class parameters $\vec A$, there is a class $B$ satisfying $(\forall x)(x\in B\;\ifff\;\varphi(x,\vec A))$, which we write as $B=\{x\mid\varphi(x,\vec A)\}$ (cf. \cite[Ch.~2]{godel1940consistency}).
\end{itemize}

Notably, this is a finite axiomatization of $\GB$. $\GB^-$ denotes $\GB$ without the powerset axiom; as we can see from above, this also has a finite axiomatization. We will sometimes additionally assume the second-order class comprehension scheme: given any second-order formula $\varphi$ with class parameters $\vec A$, there is a class ${B={\{x\mid\varphi(x,\vec A)\}}}$ (together with $\GB$ and global choice, this comprises Kelley-Morse set theory). However, unless otherwise specified, from here we assume only $\GB$.

If $I$ is a class, we say that a class $D$ codes a sequence $\<D_i\mid i\in I>$ of classes $D_i$ if $D=\{\<i,d>\mid i\in I\;\land\;d\in D_i\}$, and with a \emph{sequence of classes}, we always mean such a sequence that is coded by a single class. We will sometimes not distinguish between sequences of classes and their codes (even though only the latter actually exist as classes). Analogous codings and remarks apply to ordered tuples of classes.

\begin{definition}\label{def:system}
A \emph{class symmetric system} is a triple of the form $\SSS=\<\PP,\G,\F>$ with the following properties:
\begin{itemize}
\item $\PP=\<P,\le>$ is a preorder, where $P,\le$ are classes. We also say that $\PP$ is a \emph{class forcing notion} in this case. We use $\1$ to denote its weakest element.
\item $\G$ is a \emph{group of automorphisms of $\PP$, parametrized by sets}; that is, there is a class $C_\G \subseteq V\times P^2$ so that for each $a\in\dom C_\G$, the class $\pi_a={\{\<p,q>\mid\<a,p,q>\in C_\G\}}$ is an automorphism of $\mathbb{P}$ and $\G=\{\pi_a\mid a\in\dom C_\G\}$ forms a group.\footnote{Note that while $\G$ is formally a third-order object, the corresponding property of $C_\G$ can be expressed by a first-order formula using only the classes $P$ and $\leq$ as parameters.} We identify $C_\G$ with $\G$.
\item $\F$ is a \emph{normal filter of subgroups of $\G$, with a base that is parametrized by sets}; that is, there is a class $C_\F\subseteq V\times\dom C_\G$ so that for every $a\in\dom C_\F$,
\begin{itemize}
\item $H_a=\{\pi_d\mid\<a,d>\in C_\F\}$ forms a subgroup of $\G$,
\item $(\exists b\in\dom C_\F)(H_b=\G)$,
\item for every $b\in\dom C_\F$, there is $c\in\dom C_\F$ so that $H_c\subseteq H_a\cap H_{b}$,
\item for every $d\in\dom C_\G$, there is $c\in\dom C_\F$ so that $H_c\subseteq\pi_d H_a\pi_d^{-1}$.\footnote{Again, this all can be expressed in a first-order way.}
\end{itemize}
$\F$ consists of all those subgroups $H$ of $\G$ of the form $\{\pi_b\mid b\in C\}$, with $C$ a class, for which there is $a\in\dom C_\F$ with $H_a\subseteq H$.\footnote{Note that $\F$ is formally a fourth-order object.} We identify $C_\F$ with~$\F$.
\end{itemize}
\end{definition}

Any statement we will want to make about $\G$ and $\F$ will be first-order expressible. For instance, all of the following: 

\begin{itemize}
\item $\pi\in\G$, for any class $\pi$, 
\item $\pi_a\pi_b=\pi_c$, $\pi_a=\pi_b^{-1}$, $\pi_a=\id$,
\item $\pi_a\in H_b$, $\pi_a H_b=H_c$, $H_b\pi_a=H_c$, 
\item $\pi_a\in H$, $H=H_b$, $H\in\F$, where $H=\{\pi_c\mid c\in C\}$ for some class $C$.
\end{itemize}

\begin{definition}
Let $\SSS=\<\PP,\G,\F>$ be a (class) symmetric system.
\begin{itemize}
\item A set or class $A\subseteq P$ is \emph{symmetric} in case \[\sym(A):=\{\pi\in\G\mid\pi[A]=A\}\in\F.\]
\item A set or class $D\subseteq P$ is \emph{symmetrically dense} in case it is symmetric and dense in $\PP$.
\item A \emph{$\PP$-name} is some set or class $\dot x$ with elements that are ordered pairs of the form $\<\dot y,p>$ with (recursively) a (set-size) $\PP$-name $\dot y$ and $p\in P$.
\item $V^{\PP}$ is the class of all set-size $\PP$-names.
\item A $\PP$-name $\dot x$ is \emph{symmetric} in case \[\{\pi\in\G\mid\pi(\dot x)=\dot x\}\in\F,\textrm{ where}\] \[\pi(\dot x)=\{\<\pi(\dot y),\pi(p)>\mid\<\dot y,p>\in\dot x\}\textrm{ recursively}.\]
\item A symmetric $\PP$-name $\dot x$ is \emph{hereditarily symmetric} in case (recursively) every $\dot y\in\dom(\dot x)$ is hereditarily symmetric; we also call such names $\SSS$-names.
\item $\HS$ is the class of all set-size $\SSS$-names.
\item We let $\rho$ denote the usual function that recursively assigns each set-size $\PP$-name its name rank, and for an ordinal $\alpha$, we let $\HS_\alpha$ denote the class of set-size $\SSS$-names $\dot x$ of name rank $\rho(\dot x)$ less than $\alpha$.\footnote{Note that if $\alpha>1$ and $P$ is a proper class, then $\HS_\alpha$ is indeed a proper class.}
\item We will use the following notation: If $X\subseteq V^{\PP}$, then $X^\bullet=\{\<\dot x,\1>\mid\dot x\in X\}$.
\end{itemize}
\end{definition}

For most applications, we will usually want our symmetric extensions to satisfy the axioms of at least $\GB$.
In the case of (standard) class forcing, there are two prominent niceness properties of class forcing: 

\begin{definition}
A notion of class forcing $\PP$ is \emph{pretame} if for every $p\in P$ and every sequence $\<D_i\mid i\in I>$ of dense subclasses of~$\PP$ with $I$ a set, there is $q\le p$ and a set $\<d_i\mid i\in I>$ such that $d_i\subseteq D_i$ and $d_i$ is predense below $q$ whenever $i\in I$.
\end{definition}

Pretameness is equivalent to the existence of forcing relations (see \cref{def:fr}) together with the preservation of the axioms of $\GB^-$ (see \cite{hks}). The second important property is \emph{tameness}, which is pretameness together with the requirement that the powerset axiom is preserved.\footnote{As in \cite{syhandbook}, this can be expressed as a simple forcing statement, given that pretameness already implies the existence of forcing relations; this property also has a combinatorial characterization, see \cite[Definition 6.2]{fh}, or \cite[Section 2.2]{friedman} for the original version.}
A main goal of this paper is to investigate analogous notions for class symmetric systems.

\section{Atomic Forcing Relations}\label{section:forcingtheorem}

In this section, we define forcing relations for atomic formulas (\cref{def:atomic}) and give a combinatorial condition on class symmetric systems (\cref{def:compt}) which ensures that a system admits forcing relations for atomic formulas (\cref{lem:atomic}).

We continue to work in the theory $\GB$. In the following, let $\SSS=\<\PP,\G,\F>$ always denote a class symmetric system.

\begin{definition}\label{def:atomic}
A class relation $\atforces$ is a \emph{forcing relation for atomic formulas} (for $\SSS$) if $\atforces$ satisfies the following properties for all $\dot x,\dot y\in\HS$:\footnote{We write $p\atforces\dot x R\dot y$ rather than $\<p,\dot x,R,\dot y>\in\atforces$, for $R\in\{\in,\subseteq,=\}$. In this section, we consider $xRy$ for $R\in\{\in,\subseteq,=\}$ and variables $x$ and $y$ to be our atomic formulas.}
\begin{enumerate}
\item $p\atforces\dot x\in\dot y$ if and only if the class
\[
\{q\in P\mid(\exists\<\dot z,r>\in\dot y)(q\leq r\,\land\,q\atforces\dot x=\dot z)\}
\]
is dense below $p$;\label{ad:el}
\item $p\atforces\dot x\subseteq\dot y$ if and only if for any $\<\dot z,r>\in\dot x$, the class
\[
\{q\in P\mid q\atforces\dot z\in\dot y\}
\]
is dense below $p\wedge r$;\footnote{By this we do not mean to imply that there exists a meet $p\wedge r\in P$, but rather that for any condition $s$ below both $p$ and $r$, there is $q$ in the given class such that $q\leq s$.}\label{ad:su}
\item $p\atforces\dot x=\dot y$ if and only if $p\atforces\dot x\subseteq\dot y$ and $p\atforces\dot y\subseteq\dot x$.\label{ad:eq}
\end{enumerate}
\end{definition}

When such a class exists, we say that $\SSS$ \emph{admits} a forcing relation for atomic formulas. Indeed, in that case, it is uniquely determined on atomic formulas, as is immediate by the usual induction on name rank:

\begin{lemma}\label{lem:uniqueat}
Any two forcing relations $\atforces,\atforces'$ for atomic formulas (for $\SSS$) will agree on all tuples $\<p,\dot x,R,\dot y>$ for $p\in P$, $R\in\{\in,\subseteq,=\}$, and $\dot x,\dot y\in\HS$.\hfill{$\qed$}
\end{lemma}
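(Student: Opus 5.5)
We argue by induction on name rank, using a well-founded order on pairs of names. Fix two forcing relations $\atforces$ and $\atforces'$ for atomic formulas for $\SSS$. Let $E$ be the class of all tuples $\<p,\dot x,R,\dot y>$ with $p\in P$, $R\in\{\in,\subseteq,=\}$ and $\dot x,\dot y\in\HS$ on which the two relations disagree; this class exists by first-order class comprehension. Suppose towards a contradiction that $E\neq\emptyset$.

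We first choose a minimal counterexample. Order the pairs $(\dot x,\dot y)$ lexicographically by $\<\max(\rho(\dot x),\rho(\dot y)),\ \rho(\dot x)+\rho(\dot y)>$, and break ties within a pair by the formula type in the order $\in$, $\subseteq$, $=$. More simply, assign to each tuple the ordinal triple
\[
\<\max(\rho(\dot x),\rho(\dot y)),\ \min(\rho(\dot x),\rho(\dot y)),\ t(R)>
\]
with $t(\in)=0$, $t(\subseteq)=1$, $t(=)=2$. Since ordinals are well-ordered and $E$ is a class, regularity/foundation for ordinals (applied to the class of triples attained by members of $E$) yields a minimal triple. This step needs only first-order comprehension, not a recursion theorem, which is why it is available in $\GB$.

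Now take a tuple in $E$ whose triple is minimal, and check that each clause of \cref{def:atomic} reduces it to strictly smaller tuples.

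\textbf{Clause (\ref{ad:eq}).} The $=$ case reduces to the two $\subseteq$ cases for the same pair $\{\dot x,\dot y\}$. These have the same first two coordinates and a smaller third coordinate.

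\textbf{Clause (\ref{ad:su}).} The $\subseteq$ case for $(\dot x,\dot y)$ reduces to $\dot z\in\dot y$ with $\dot z\in\dom(\dot x)$, so $\rho(\dot z)<\rho(\dot x)$. If $\rho(\dot x)$ was the max, either the max drops, or the max stays at $\rho(\dot y)$ (when it equals $\rho(\dot x)$) and the min drops. If $\rho(\dot y)$ was the max, the max is unchanged and the min drops.

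\textbf{Clause (\ref{ad:el}).} The $\in$ case for $(\dot x,\dot y)$ reduces to $\dot x=\dot z$ with $\dot z\in\dom(\dot y)$, and here the third coordinate increases. This is the only delicate point, so the measure must drop in the first two coordinates. It does: $\rho(\dot z)<\rho(\dot y)$, so either the max drops, or the max is $\rho(\dot x)>\rho(\dot z)$ and it is kept while the min drops (the min is now $\rho(\dot z)$, which is less than $\rho(\dot y)$ and at most the old min). If $\rho(\dot x)=\rho(\dot y)$ the max drops to $\rho(\dot x)$ only when $\rho(\dot x)>\rho(\dot z)$, and then the min drops, which suffices.

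Conclusion. All tuples referenced on the right-hand side of the relevant clause are therefore outside $E$, so $\atforces$ and $\atforces'$ agree on them. The defining clause then shows that both relations hold or both fail at the minimal tuple, which contradicts its membership in $E$. Note that the density conditions referred to in the clauses mention only tuples $\<q,\dots>$ with the smaller name pairs, for arbitrary $q$, and agreement on those is exactly what minimality gives.
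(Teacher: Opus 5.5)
Your proof is correct and is exactly the ``usual induction on name rank'' that the paper invokes without giving details. The symmetric measure $\langle\max(\rho(\dot x),\rho(\dot y)),\min(\rho(\dot x),\rho(\dot y)),t(R)\rangle$ does strictly decrease through all three clauses, including the swapped pair $(\dot y,\dot x)$ in clause (\ref{ad:eq}) and the $\in\to{=}$ step in clause (\ref{ad:el}). One sentence is muddled, though: in clause (\ref{ad:el}) with $\rho(\dot x)=\rho(\dot y)$, the max does not drop. It stays at $\rho(\dot x)$, and only the min drops, to $\rho(\dot z)$, which is what your preceding sentence already correctly says.
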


A forcing relation for atomic formulas also satisfies some familiar properties of forcing relations.

\begin{lemma}\label{lem:atsymm}
Any forcing relation $\atforces$ for atomic formulas (for $\SSS$) satisfies the symmetry lemma: for $\pi\in\G$, $p\in P$, $\R\in\{\in,\subseteq,=\}$, and $\dot x,\dot y\in\HS$,
\[
p\atforces\dot x\R\dot y\iffq\pi(p)\atforces\pi(\dot x)\R\pi(\dot y).
\]
\end{lemma}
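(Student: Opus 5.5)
The plan is to use the uniqueness result, \cref{lem:uniqueat}, rather than a direct induction. Fix $\pi\in\G$ and define a new class relation $\atforces^\pi$ by
\[
p\atforces^\pi\dot x\R\dot y\iffq\pi(p)\atforces\pi(\dot x)\R\pi(\dot y).
\]
This is a class by first-order class comprehension. The parameters are $\atforces$, the class coding $\G$, and the set index $a$ with $\pi=\pi_a$. The map $\dot x\mapsto\pi(\dot x)$ on set-size names is defined by set recursion from the class $\pi$, so it is first-order definable as well. We first check that $\pi$ maps $\HS$ onto $\HS$. If $\dot x$ is symmetric, then $\sym(\pi(\dot x))=\pi\sym(\dot x)\pi^{-1}$. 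This group lies in $\F$ by normality of the filter, using the fourth clause of \cref{def:system} applied to a basic $H_a\subseteq\sym(\dot x)$. Hereditariness then follows by induction on name rank, since $\dom(\pi(\dot x))=\{\pi(\dot z)\mid\dot z\in\dom(\dot x)\}$. Surjectivity comes from applying the same argument to $\pi^{-1}\in\G$.

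Next we verify that $\atforces^\pi$ satisfies the three clauses of \cref{def:atomic}, taking them in turn.

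For clause (\ref{ad:el}), $p\atforces^\pi\dot x\in\dot y$ means $\pi(p)\atforces\pi(\dot x)\in\pi(\dot y)$. By the definition of $\atforces$, this holds exactly when the class of $q'$ with some $\<\dot z',r'>\in\pi(\dot y)$, $q'\le r'$ and $q'\atforces\pi(\dot x)=\dot z'$ is dense below $\pi(p)$. The elements of $\pi(\dot y)$ are exactly the pairs $\<\pi(\dot z),\pi(r)>$ with $\<\dot z,r>\in\dot y$. Since $\pi$ is an automorphism of $\PP$, we may write $q'=\pi(q)$, and density below $\pi(p)$ pulls back to density below $p$. This turns the condition into the required one for $\atforces^\pi$.

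Clause (\ref{ad:su}) works the same way. The point to check is that the conditions below both $\pi(p)$ and $\pi(r)$ are exactly the images under $\pi$ of the conditions below both $p$ and $r$. Clause (\ref{ad:eq}) is immediate.

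Hence $\atforces^\pi$ is a forcing relation for atomic formulas. By \cref{lem:uniqueat} it agrees with $\atforces$ on all tuples with names in $\HS$, and this is exactly the statement of the lemma.

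I expect no real obstacle here. The only care needed is the definability of $\atforces^\pi$ as a class in $\GB$, which requires that $\dot x\mapsto\pi(\dot x)$ be available as a class function, and the check that $\pi$ preserves $\HS$ so that \cref{lem:uniqueat} applies.
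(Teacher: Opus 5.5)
Your proof is correct, but it takes a different route from the paper.

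The paper argues directly by induction on the name ranks of $\dot x,\dot y$. It unfolds the clauses of \cref{def:atomic} and pushes the relevant dense classes forward by $\pi$, writing out one direction of the $\in$ case; the converse follows by applying $\pi^{-1}$.

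You instead transport the whole relation. You define $\atforces^\pi$ and check that it satisfies the recursion of \cref{def:atomic}. Each clause needs only a single unfolding, with no inductive hypothesis. Then \cref{lem:uniqueat} finishes the argument. This buys you two things:
\begin{itemize}
\item The only induction is the one already packaged in the uniqueness lemma.
\item Both directions of the equivalence come out at once.
\end{itemize}
It also makes the reason for the lemma transparent: a relation uniquely determined by a recursion that is invariant under an automorphism must itself be invariant under it.

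The price is the two points you flagged.
\begin{itemize}
\item $\atforces^\pi$ must exist as a class, via first-order comprehension using that $\dot x\mapsto\pi(\dot x)$ is definable by set recursion from the class $\pi$.
\item $\pi$ must map $\HS$ into $\HS$, so that the clauses for $\atforces$ apply to $\pi(\dot x),\pi(\dot y)$.
\end{itemize}
The paper's direct induction implicitly needs both as well. In $\GB$ one must be able to express the inductive statement, or form the class of counterexamples. So your version mainly makes these requirements explicit rather than adding new ones.

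Two small remarks. Surjectivity of $\pi$ on $\HS$ is never used; $\pi[\HS]\subseteq\HS$ suffices. Your reuse of the letter $a$ for both the index of $\pi$ and the basic subgroup $H_a$ is a harmless notational clash.
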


\begin{proof}
This is shown in exactly the same way as for usual (set-sized) symmetric systems: by induction on the name rank of $\dot x,\dot y\in\HS$, using the equivalences in \cref{def:atomic}. For example, if $p\atforces\dot x\in\dot y$, then
\[
\{q\in P\mid(\exists\<\dot z,r>\in\dot y)(q\leq r\,\land\,q\atforces\dot x=\dot z)\}
\]
is dense below $p$, so by the inductive hypothesis,
\[
\{\pi(q)\mid q\in P\,\land\,(\exists\<\pi(\dot z),\pi(r)>\in\pi(\dot y))(\pi(q)\leq\pi(r)\,\land\,\pi(q)\atforces\pi(\dot x)=\pi(\dot z))\}
\]
is dense below $\pi(p)$, proving that $\pi(p)\atforces\pi(\dot x)\in\pi(\dot y)$.
\end{proof}

\begin{lemma}\label{lem:atom}
Suppose that $\SSS$ admits a forcing relation $\atforces$ for atomic formulas.
\begin{enumerate}
\item If $\<\dot x,p>\in\dot y$, then $p\atforces\dot x\in\dot y$.
\item For all $p\in P$,\; $p\atforces\dot x=\dot x$.
\end{enumerate}
\end{lemma}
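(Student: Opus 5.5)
Both parts follow the standard set-forcing arguments, using only the recursive clauses of \cref{def:atomic}. Part (2) will be proved first, because part (1) depends on it. Throughout, $\dot x,\dot y$ range over $\HS$. If $\<\dot z,r>\in\dot x$ with $\dot x\in\HS$, then $\dot z\in\HS$ by hereditary symmetry, so the clauses of \cref{def:atomic} apply to all pairs of names that arise below.

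For (2), we use induction on the name rank $\rho(\dot x)$. This induction is formalizable in $\GB$: we show that the class of $\dot x\in\HS$ for which some $p$ fails $p\atforces\dot x=\dot x$ is empty by taking a $\rho$-minimal element. This class exists by first-order class comprehension, since $\atforces$ is a class. So assume $q\atforces\dot z=\dot z$ holds for all $q\in P$ and all $\dot z\in\dom(\dot x)$. By clause (\ref{ad:eq}), it suffices to show $p\atforces\dot x\subseteq\dot x$. By clause (\ref{ad:su}), we fix $\<\dot z,r>\in\dot x$ and show that $\{q\mid q\atforces\dot z\in\dot x\}$ is dense below $p\wedge r$. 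Let $s\le p,r$. We claim that $s\atforces\dot z\in\dot x$ itself holds.

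By clause (\ref{ad:el}), the claim says that the class $E=\{q\mid(\exists\<\dot w,t>\in\dot x)(q\le t\land q\atforces\dot z=\dot w)\}$ is dense below $s$. Any $q\le s$ satisfies $q\le r$, and $q\atforces\dot z=\dot z$ by the induction hypothesis. Hence $q\in E$, witnessed by $\<\dot z,r>$ itself. So $E$ contains every condition below $s$, and $s$ lies in the required dense class.

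For (1), let $\<\dot x,p>\in\dot y$. By clause (\ref{ad:el}), we must show that $\{q\mid(\exists\<\dot z,r>\in\dot y)(q\le r\land q\atforces\dot x=\dot z)\}$ is dense below $p$. Every $q\le p$ belongs to this class, witnessed by $\<\dot x,p>$, since $q\atforces\dot x=\dot x$ holds by (2).

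I do not expect a real obstacle here. The only point needing care is that the induction in (2) is legitimate in $\GB$, which the minimal-counterexample argument handles. The restriction to $\HS$ is harmless because names in the domain of a hereditarily symmetric name are again hereditarily symmetric.
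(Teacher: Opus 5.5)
Your proof is correct and takes the paper's approach: the paper simply says (1) and (2) follow simultaneously by induction on name rank from the clauses of \cref{def:atomic}. You reorganize this by proving (2) first, inlining the needed instance of (1) for the lower-rank names $\dot z\in\dom(\dot x)$, and then deriving (1) from (2), which is a cosmetic difference. Proving $s\atforces\dot z\in\dot x$ directly for every $s\le p,r$ is a careful choice, since it avoids relying on monotonicity of $\atforces$, which has not yet been established at that point.
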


\begin{proof}
Considering \cref{def:atomic}, it is straightforward to show (1) and (2) simultaneously by induction on name rank.
\end{proof}

When does $\SSS$ admit a forcing relation for atomic formulas? Given second-order class comprehension, the answer is always.

\begin{lemma}\label{lem:soatomic}
Assume second-order class comprehension\footnote{Note that only finitely many instances of the second-order class comprehension scheme are actually needed for the proof, so this result could be restated as a theorem of $\GB$.}. Then $\SSS$ admits a forcing relation for atomic formulas.
\end{lemma}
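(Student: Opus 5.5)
We define $\atforces$ outright as the least class satisfying the three clauses of \cref{def:atomic}, expressed through a second-order comprehension instance. The induction is on pairs $\<\dot x,\dot y>\in\HS^2$, ordered by, say, $\max(\rho(\dot x),\rho(\dot y))$ together with the type of relation. Clause (\ref{ad:el}) for $\dot x\in\dot y$ refers to $=$ between $\dot x$ and $\dot z\in\dom\dot y$. Clause (\ref{ad:su}) refers to $\in$ between $\dot z\in\dom\dot x$ and $\dot y$. Clause (\ref{ad:eq}) refers to $\subseteq$ on the same pair. So we rank a triple $(\dot x,R,\dot y)$ by the ordinal $3\cdot\max\{\rho(\dot x),\rho(\dot y)\}$ plus an offset depending on $R$, chosen so that every clause only mentions triples of strictly smaller rank. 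One workable choice puts $\subseteq$ below $=$ at the same level, and $\in$ involving a lower-rank $\dot z$ below $\subseteq$; since $\rho(\dot z)<\rho(\dot x)$, this is fine if we use the natural sum $\rho(\dot x)\oplus\rho(\dot y)$ instead of the max. With this, the three clauses define a well-founded recursion of ordinal length $\Ord$.

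The key step is that, without being able to build the relation stage by stage as a set, we use second-order comprehension to define the class
\[
\atforces=\{\<p,\dot x,R,\dot y>\mid \forall X\,(X\text{ is closed under the clauses of \cref{def:atomic}, read as implications from right to left}\rightarrow\<p,\dot x,R,\dot y>\in X)\}.
\]
Here ``closed'' means: whenever the right-hand side of a clause holds with $\atforces$ interpreted as $X$, the left-hand tuple is in $X$. The clauses are not positive, because density uses $\atforces$ positively but the universal quantification is harmless. So we do not take a least fixed point. Instead we use the standard trick: let $\atforces$ be the class of tuples $t$ such that there exists a class $Y$ which is a \emph{partial attempt} up to the rank of $t$ and contains $t$. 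A partial attempt up to rank $\alpha$ is a class $Y$ of tuples of rank $<\alpha$ satisfying all three biconditionals for triples of rank $<\alpha$. This is one second-order comprehension instance, with the class quantifier ranging over $Y$.

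We then verify, by induction on $\alpha$ (an induction over a first-order-in-$Y$ statement, available since the relevant classes exist by comprehension), two facts. First, uniqueness: any two partial attempts up to $\alpha$ agree on all tuples. This is the argument of \cref{lem:uniqueat}, since each clause only consults lower-rank triples; we take a least rank of disagreement, which exists because the class of disagreements is obtained by comprehension. Second, existence: for every $\alpha$ a partial attempt up to $\alpha$ exists. For a successor we extend $Y$ by the tuples of rank exactly $\alpha$ defined from $Y$ by the clauses; this is first-order comprehension. At limits we take the union of all attempts below, which is a second-order comprehension instance, and uniqueness makes the union coherent. Given both facts, $\atforces$ agrees with every attempt on its domain, hence satisfies the biconditionals for all $\dot x,\dot y\in\HS$.

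The main obstacle is the existence step at limit $\alpha$, and more fundamentally that ``there is an attempt up to every $\alpha$'' is a second-order statement. To prove it by induction we need the class of $\alpha$ for which no attempt exists, which again needs second-order comprehension, and then take its least element. Since $\HS_\alpha$ is a proper class, we cannot replace attempts by sets. This is exactly where the extra comprehension is used, and the argument uses only finitely many such instances, matching the footnote.
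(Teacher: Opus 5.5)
Your argument is correct, but it is organized differently from the paper's. The paper does not carry out any recursion itself. It quotes the corresponding result for ordinary class forcing (Lemma~15 of the cited work of Antos et al.), which under second-order comprehension gives an atomic forcing relation for $\PP$ on \emph{all} $\PP$-names. It then restricts that relation to $\HS$. This is legitimate because, for $\dot x,\dot y\in\HS$, the clauses of \cref{def:atomic} only mention $\dot x$, $\dot y$ and names from $\dom(\dot x)\cup\dom(\dot y)$, all of which lie in $\HS$.

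You instead run the class recursion directly on $\HS$ via partial attempts. You use second-order comprehension for the class of ordinals without an attempt, for the unions at limit stages, and for the final relation. The paper's route is shorter and shows that nothing specific to symmetric systems is involved. Yours is self-contained and never touches non-symmetric names. It also makes explicit that the real work is a recursion of length $\Ord$ whose stages are proper classes, which is why $\GB$ alone does not suffice, and that only finitely many comprehension instances are used.

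Three small points need fixing:
\begin{enumerate}
\item An attempt containing $t$ must be an attempt up to $\mathrm{rk}(t)+1$, not up to $\mathrm{rk}(t)$.
\item With $\max$ in place of the natural sum, no choice of offsets works, since the dependencies $\in\to{=}\to{\subseteq}\to\in$ can all occur at a single $\max$-level. So the natural sum (or a lexicographic order) is genuinely needed, as you eventually say.
\item Your reason for discarding the least-fixed-point definition is mistaken, because the clauses are monotone in $\atforces$. Enlarging the relation can only enlarge the classes whose density is required. Write $\Phi(X)$ for the class of tuples whose right-hand side holds with $X$ in place of $\atforces$. The intersection $L$ of all $X$ with $\Phi(X)\subseteq X$ is a class by a single $\Pi^1_1$ comprehension instance, and it satisfies $\Phi(L)\subseteq L$. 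Then $\Phi(L)$ is itself closed, so $L\subseteq\Phi(L)$ as well. That shorter definition would therefore have worked too.
\end{enumerate}
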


Roughly, this follows because second-order class comprehension allows us to do recursion on classes. See \cite[Lemma~15]{Antos2018} for a proof in the context of usual class forcing; applying this to $\PP$ and then restricting the result to $\HS$ gives us a forcing relation for atomic formulas for $\SSS$.

However, when we only assume $\GB$, this argument no longer works (and in fact there can be class forcing notions that do not admit forcing relations for atomic formulas, cf. \cite[Corollary~7.6]{hklns}). Instead, we give a condition on $\SSS$, based on the idea of pretameness, which ends up ensuring that $\SSS$ admits forcing relations for atomic formulas.

\begin{definition}\label{def:compt}
\begin{itemize}
\item A sequence $\<D_i\mid i\in I>$ of subclasses of $\PP$  is \emph{symmetric} if $I\subseteq\HS$ is such that $I^\bullet$ is symmetric and there is $H\in\F$ such that $\pi[D_i]=D_{\pi(i)}$ whenever $\pi\in H$ and $i\in I$.
\item For $q\in P$, a class $D\subseteq P$ is \emph{predense below $q$} in case any $r\le q$ is compatible with some element of $D$.
\item A class symmetric system $\SSS=\<\PP,\G,\F>$ is \emph{combinatorially pretame} if for every $p\in P$ and every symmetric sequence $\<D_i\mid i\in I>$ of dense subclasses of~$\PP$  with $I$ a set, there is $q\le p$ and a set $\<d_i\mid i\in I>$ such that $d_i\subseteq D_i$ and $d_i$ is predense below $q$ whenever $i\in I$.
\end{itemize}
\end{definition}

Note that if $\<D_i\mid i\in I>$ is a symmetric sequence of subclasses of $\PP$, as witnessed by $H\in\F$, then $\pi[D_i]=D_i$ whenever $i\in I$ and $\pi\in H\cap\sym(i)$, hence every $D_i$ is symmetric.
Note also that if $\PP$ is pretame, then $\SSS$ is immediately seen to be combinatorially pretame. 

\medskip

Adapting the arguments of M.\ Stanley presented in \cite[Theorem 2.4]{hks}, we show:

\begin{theorem}\label{lem:atomic}
If $\SSS$ is combinatorially pretame, then $\SSS$ admits a forcing relation $\atforces$ for atomic formulas.
\end{theorem}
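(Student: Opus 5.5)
We follow Stanley's argument from \cite[Theorem 2.4]{hks}. The main idea is to construct $\atforces$ by recursion on name rank, using only set-sized pieces of data at each stage. Working in $\GB$ alone, we cannot perform a recursion whose stages are proper classes. Instead, we define by recursion on ordinals $\alpha$ a sequence of \emph{set-sized} approximations: for each $\alpha$ we choose a set $P_\alpha\subseteq P$ and a relation $\forces^\alpha\subseteq P_\alpha\times\HS_\alpha\times\{\in,\subseteq,=\}\times\HS_\alpha$, where the names are restricted to those all of whose conditions (hereditarily) lie in $P_\alpha$. The sets $P_\alpha$ are chosen so that, for names in $\HS_\alpha$, density below $p$ of the classes in \cref{def:atomic} is already witnessed inside $P_{\alpha+1}$. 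The full relation $\atforces$ will then be defined first-order from a single class that codes all the stages. For this to work, the recursion must be driven by a first-order definable rule with class parameters.

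The key steps are as follows. First, fix $\alpha$ and a set $a$ of names from $\HS_\alpha$. Note that each relevant density class, e.g. $D_{\dot x,\dot y}=\{q\mid q\atforces\dot x\in\dot y\}\cup\{q\mid q \text{ incompatible-forces the negation}\}$, is a dense class. The family of these classes, indexed by pairs of names, forms a \emph{symmetric} sequence in the sense of \cref{def:compt}. This is where \cref{lem:atsymm} enters: the symmetry lemma at lower ranks gives $\pi[D_{\dot x,\dot y}]=D_{\pi(\dot x),\pi(\dot y)}$ for all $\pi\in\G$. We can also take the index set $I$ to be symmetric by closing $a$ under a group in $\F$ fixing the relevant parameters, or simply by letting $I$ be the set of all pairs of names from $\HS_\alpha$ built from conditions in a fixed set $P_\alpha$. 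That set is $\G$-invariant once $P_\alpha$ is closed under the relevant $\pi_a$. Second, we apply combinatorial pretameness densely often below each $p$: for every $p$ there is $q\le p$ together with set-sized predense pieces $d_i\subseteq D_i$ below $q$. Third, we use these pieces to define, in a first-order way, the relation at stage $\alpha+1$. The statement ``$\{q\mid\ldots\}$ is dense below $p$'' can be replaced by ``there is a set-size maximal antichain/predense set below $p$ consisting of conditions that are below some $r$ with $\<\dot z,r>\in\dot y$ and force $\dot x=\dot z$ at the earlier stage.'' Here pretameness guarantees that deciding sets exist densely. Finally, we verify the clauses of \cref{def:atomic} for the resulting relation by induction on name rank, and observe that it is first-order definable with class parameters. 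It therefore exists by class comprehension.

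The main obstacle is the first step. We must arrange the recursion so that the forcing relation at lower ranks, needed to define the dense classes $D_i$, is already available as a class, and so that the sequence of these dense classes is symmetric with a set-sized symmetric index. Stanley's trick handles the former by recursing on ranks with set-sized restrictions of $P$. The symmetric variant needs the index sets $I$ to satisfy $I^\bullet$ symmetric. We would try to achieve this by indexing over all of $\HS_\alpha$ restricted to a $\G$-closed set of conditions. If that is too large to be a set, we would instead index over the orbit-closure $\{\pi(\dot x)\mid \pi\in\G\}$ modulo a group $H\in\F$ fixing the finitely many relevant names. This set is symmetric by normality of $\F$.
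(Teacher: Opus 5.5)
There is a genuine gap, and it sits exactly at the step you flag as the main obstacle. The proposal never gives a first-order definition of the relation, and the way it invokes combinatorial pretameness is circular.

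The classes $D_{\dot x,\dot y}$ are defined from $\atforces$ (or from its lower-rank part). Their symmetry is derived from \cref{lem:atsymm}, which presupposes that a forcing relation for atomic formulas already exists. If $P$ is a proper class, the lower-rank part of the relation is itself a proper class. A recursion on name rank therefore has class-sized stages, which $\GB$ cannot carry out; this is why \cref{lem:soatomic} needs second-order comprehension.

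Cutting the stages down to sets $P_\alpha$ does not repair this. ``Dense below $p$'' quantifies over all $r\le p$ in $P$ and needs the earlier relation at arbitrary $q\in P$, not just at $q\in P_\alpha$. You also give no first-order, choice-free rule for picking $P_\alpha$ so that density computed inside $P_{\alpha+1}$ agrees with true density. Finding such reflecting sets would itself require the dense classes you are trying to define.

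The indexing fallback fails in general too. The $\G$-orbit of a condition, or of a name modulo some $H\in\F$, is typically a proper class (e.g.\ $\{\pi(q)\mid\pi\in\fix(e)\}$ in the proposition of \cref{section:powerset}), so $\G$-closed sets $P_\alpha$ need not exist. Where pretameness is actually needed, $I=\dom(\dot x)$ suffices, since every $\pi\in\sym(\dot x)$ maps $\dom(\dot x)$ onto itself.

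The missing idea, which is what the argument you cite actually does, is to avoid recursion altogether by using set-sized \emph{certificates}. Say $p\witforces\dot x\R\dot y$ if there is a set $\mathbf f$ of tuples $\<q,\dot u,\in,\dot v>$ and $\<q,\dot u,\subseteq,\dot v>$ containing $\<p,\dot x,\R,\dot y>$, together with sets $d^e$ for $e\in\mathbf f$, with the following properties. Each $d^e$ is predense below $q$ (resp.\ there is a family of sets predense below $q\wedge s$, one for each $\<\dot w,s>\in\dot u$). Each clause of \cref{def:atomic} for a member of $\mathbf f$ is witnessed by conditions in $d^e$ and by further members of $\mathbf f$.

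This definition is first-order. So defining $p\atforces\dot x\R\dot y$ to mean that $\{q\mid q\witforces\dot x\R\dot y\}$ is dense below $p$ yields a class by comprehension. One then verifies the recursion using three facts:
\begin{itemize}
\item certificates are closed downward in the condition;
\item a set of certificates can be amalgamated by taking unions;
\item $\witforces$ and $\atforces$ satisfy a symmetry lemma, proved directly by transporting certificates along $\pi$ rather than via \cref{lem:atsymm}.
\end{itemize}
Combinatorial pretameness enters only in the backward direction of the $\subseteq$-clause. It is applied to the now legitimate classes $D_{\dot z}=\{s\mid s\witforces\dot z\in\dot y\lor(\forall t\le s)\,t\notatforces\dot z\in\dot y\}$ for $\dot z\in\dom(\dot x)$. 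Collection then gathers certificates for the members of the resulting predense sets, and amalgamation merges them.

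Your third step, replacing density by set-sized predense sets, has the right flavour. It only closes the circle once the set-sized witnesses are made hereditary in this way, so that no ``earlier stage'' relation is needed.
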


\begin{proof}
We start by defining an auxiliary relation $\witforces$. For a condition $p\in P$, a relation symbol $\R\in\{\in,\subseteq\}$, and $\dot x,\dot y\in\HS$, we define $p\witforces\dot x\R\dot y$ to mean there are sets $\mathbf{f}\subseteq P\times \HS\times\{\in,\subseteq\}\times\HS$ and $\vec d=\<d^e\mid e\in\mathbf f>$ such that the following hold:
\begin{itemize}
\item $\<p,\dot x,\R,\dot y>\in\mathbf f$.
\item If $e=\<q,\dot u,\in,\dot v>\in\mathbf f$, then $d^e$ is a subset of $P$ that is predense below $q$ such that for all $r\in d^e$, there is $\<\dot w,s>\in\dot v$ with  $r\leq s$ and \[\<r,\dot u,\subseteq,\dot w>,\<r,\dot w,\subseteq,\dot u>\in\mathbf f.\]
\item If $e=\<q,\dot u,\subseteq,\dot v>\in\mathbf f$, then $d^e=\<d^e_{\dot w,s}\mid\<\dot w,s>\in\dot u>$ is a sequence of subsets $d^e_{\dot w,s}$ of $P$ that are predense below $q\wedge s$\footnote{This means that for any condition $t$ below both $q$ and $s$, there is a condition in $d^e_{\dot w,s}$ that is compatible with $t$.} such that $\<r,\dot w,\in,\dot v>\in\mathbf f$ whenever $r\in d^e_{\dot w,s}$.
\end{itemize}
If the above holds, we say that $\mathbf f$ and $\vec d$ witness that $p\witforces\dot x\R\dot y$. We also let $p\witforces\dot x=\dot y$ abbreviate $(p\witforces\dot x\subseteq\dot y)\land(p\witforces\dot y\subseteq\dot x)$.

We can now define the relation $\atforces$ which we will show satisfies the recursion in \cref{def:atomic}: for $p\in P$, $\R\in\{\in,\subseteq,=\}$, and $\dot x,\dot y\in\HS$, we define that \[p\atforces\dot x\R\dot y\iffq\{q\in P\mid q\witforces\dot x\R\dot y\}\textrm{ is dense below }p.\] Before checking the conditions in \cref{def:atomic}, we prove some claims to make the arguments easier.

\begin{claim}\label{claim:at}
\begin{enumerate}
\item If $\<\dot x,p>\in\dot y$, then $p\witforces\dot x\in\dot y$.
\item If $p\witforces\dot x\R\dot y$ and $q\leq p$, then $q\witforces\dot x\R\dot y$.
\item Suppose that $\<\<p_i,\dot{x}_i,\R_i,\dot{y}_i,\mathbf f_i,\vec{d}_i>\mid i\in I>$ is a set such that for each ${i\in I}$, the sets $\mathbf f_i$ and $\vec{d}_i$ witness that $p_i\witforces\dot{x}_i\R_i\dot{y}_i$. Then there are sets $\mathbf f,\vec d$ witnessing that $p_i\witforces\dot{x}_i\R_i\dot{y}_i$ for all $i\in I$ simultaneously.
\item $\witforces$ and $\atforces$ satisfy the symmetry lemma: for $\pi\in\G$, $p\in P$, ${\R\in\{\in,\subseteq,=\}}$, and $\dot x,\dot y\in\HS$, we have both $p\witforces\dot x\R\dot y\iff\pi(p)\witforces\pi(\dot x)\R\pi(\dot y)$ and $p\atforces\dot x\R\dot y\iff\pi(p)\atforces\pi(\dot x)\R\pi(\dot y)$.\footnote{We cannot use \cref{lem:atsymm} at this point, because we do not yet know that $\atforces$ is a forcing relation for atomic formulas.}
\end{enumerate}
\end{claim}

\begin{proof}
(1) Suppose that $\<\dot x,p>\in\dot y$. Let $\mathbf f$ consist of those $\<q,\dot w,\in,\dot v>$ for which $\<\dot w,q>\in\dot v$ and $\dot v$ is in the transitive closure of $\dot y$ or $\dot v=\dot y$, and also of those $\<q,\dot u,\subseteq,\dot u>$ for which $\<\dot u,q>$ is in the transitive closure of $\dot y$. Let $\vec d=\{d^e\mid e\in \mathbf f\}$ where $d^{\<q,\dot v,\in,\dot w>}=\{q\}$ and $d^{\<q,\dot u,\subseteq,\dot u>}_{\dot w,s}=\{s\}$ for all relevant $\dot u,\dot v,\dot w$ and $s$. Then it is clear from the definition of $\witforces$ that $\mathbf f$ and $\vec d$ witness that $p\witforces \dot x\in\dot y$.

(2) Now suppose that $p\witforces\dot x\R\dot y$, as witnessed by $\mathbf f$ and $\vec d$, and that $q\leq p$. Then the sets $\mathbf f\cup\{\<q,\dot x,\R,\dot y>\}$ and $\vec d\cup\{\<\<q,\dot x,\R,\dot y>,d^{\<p,\dot x,\R,\dot y>}>\}$ witness that $q\witforces\dot x\R\dot y$ (because any set that is predense below $p$ will be predense below $q\leq p$).

(3) Next, suppose the premise of (3) holds. Let $\mathbf f=\bigcup_{i\in I}\mathbf f_i$ and $\vec d={\{d^e\mid e\in \mathbf f\}}$, where $d^{\<q,\dot u,\in,\dot v>}=\bigcup_{i\in I}(d_i)^{\<q,\dot u,\in,\dot v>}$ and $d^{\<q,\dot u,\subseteq,\dot v>}_{\dot w,s}=\bigcup_{i\in I}(d_i)^{\<q,\dot u,\subseteq,\dot v>}_{\dot w,s}$ whenever $\<\dot w,s>\in\dot u$.\footnote{If $e\not\in \mathbf f_i$, we consider $(d_i)^e=\emptyset$ and $(d_i)^e_{\dot w,s}=\emptyset$ for any $\<\dot w,s>$.} That $\mathbf f$ and $\vec d$ are as desired follows from the definition of $\witforces$.

(4) Suppose that $\pi\in\G$, $p\in P$, $\R\in\{\in,\subseteq,=\}$, and $\dot x,\dot y\in\HS$. By definition, we have $p\atforces\dot x\R\dot y$ exactly when the class $\{q\in P\mid q\witforces\dot x\R\dot y\}$ is dense below $p$. Thus, if we can show that $\witforces$ satisfies the symmetry lemma, we are done. To that end, suppose that $\mathbf f$ and $\vec d$ witness that $p\witforces\dot x\R\dot y$. Consider $\pi$ applied to $\mathbf f$ and $\vec d$ in the obvious way:
\[
\pi(\mathbf f)=\{\<\pi(q),\pi(\dot u),\R',\pi(\dot v)>\mid\<q,\dot u,\R',\dot v>\in \mathbf f\},\quad\pi(\vec d)=\{\pi[d^e]\mid e\in \mathbf f\}.
\]
Examining the definition of $\witforces$, it is straightforward to see that $\pi(\mathbf f)$ and $\pi(\vec d)$ witness that $\pi(p)\witforces\pi(\dot x)\R\pi(\dot y)$.
\end{proof}

\noindent We now prove that $\atforces$ satisfies the recursion in \cref{def:atomic}. We start with the reverse direction of \cref{def:atomic}(\ref{ad:el}): assume that
\[
\{q\in P\mid(\exists\<\dot z,r>\in\dot y)(q\leq r\,\land\,q\atforces\dot x=\dot z)\}
\]
is dense below $p$. Our definition of $\atforces$ then implies that
\[
D=\{q\in P\mid(\exists\<\dot z,r>\in\dot y)(q\leq r\,\land\,q\witforces\dot x=\dot z)\}
\]
is dense below $p$. We show that $q\witforces\dot x\in\dot y$ for any $q\in D$. To this end, let $q\in D$, say with $\<\dot z,r>\in\dot y$ such that $q\leq r$ and $q\witforces\dot x=\dot z$. By \cref{claim:at}(1), $r\witforces\dot z\in\dot y$, so \cref{claim:at}(2) gives us $q\witforces\dot z\in\dot y$. We use \cref{claim:at}(3) to find $\mathbf f$ and $\vec d$ witnessing that $q\witforces\dot z\in\dot y$ and $q\witforces\dot x=\dot z$ simultaneously. Then $\mathbf f\cup\{\<q,\dot x,\in,\dot y>\}$ and $\vec d\cup\{\<\<q,\dot x,\in,\dot y>,\{q\}>\}$ witness that $q\witforces\dot x\in\dot y$; this is not immediate, but can be seen as a result of $\<\dot z,r>\in\dot y$ and the definition of $\witforces$. Thus, $D\subseteq\{q\mid q\witforces\dot x\in\dot y\}$ being dense below $p$ implies that $p\atforces\dot x\in\dot y$.

For the forward direction, suppose that $p\atforces\dot x\in\dot y$. We want to show that the class $\{q\mid(\exists\<\dot z,r>\in\dot y)(q\leq r\,\land\,q\atforces\dot x=\dot z)\}$ is dense below $p$. Let $p_0\leq p$. By definition of $\atforces$, there is $p_1\leq p_0$ such that $p_1\witforces\dot x\in\dot y$, say witnessed by $\mathbf f$ and $\vec d$. Then our definition of $\witforces$ allows us to pick $p_2\in d^{\<p_1,\dot x,\in,\dot y>}$ and $\<\dot z,r>\in\dot y$ such that $p_2\leq r$ is compatible with $p_1$ and $\<p_2,\dot x,\subseteq,\dot z>,\<p_2,\dot z,\subseteq,\dot x>\in \mathbf f$, hence $p_2\witforces\dot x=\dot z$. Letting $q\leq p_1,p_2$, \cref{claim:at}(2) gives us $q\witforces\dot x=\dot z$. From \cref{claim:at}(2) again, it is easy to see that $q\atforces\dot x=\dot z$, so we are done.

To show that $\atforces$ satisfies \cref{def:atomic}(\ref{ad:su}), suppose first that $p\atforces\dot x\subseteq\dot y$, and let $\<\dot z,r>\in\dot x$. We want to show that $\{q\mid q\atforces\dot z\in\dot y\}$ is dense below $p\wedge r$. Suppose that $p_0\leq p,r$. By definition of $\atforces$, there is $p_1\leq p_0$ such that $p_1\witforces\dot x\subseteq\dot y$, say witnessed by $\mathbf f$ and $\vec d$. Then $e=\<p_1,\dot x,\subseteq,\dot y>\in \mathbf f$, and we can find $p_2\in d^e_{\dot z,r}$ compatible with $p_1$, for which $\<p_2,\dot z,\in,\dot y>\in\mathbf f$. Observe that $\mathbf f$ and $\vec d$ witness that $p_2\witforces\dot z\in\dot y$, so taking $q\leq p_1,p_2$, we get a $q\leq p_0$ with $q\atforces\dot z\in\dot y$ by \cref{claim:at}(2).

For the reverse direction of \cref{def:atomic}(\ref{ad:su}), and the only part in which we actually apply combinatorial pretameness, suppose that whenever $\<\dot z,r>\in\dot x$, the class ${\{q\mid q\atforces\dot z\in\dot y\}}$ is dense below $p\wedge r$. Note that the class
\[
D_{\dot z}=\{s\in P\mid(s\witforces\dot z\in\dot y)\lor(\forall t\leq s)(t\notatforces\dot z\in\dot y)\}
\]
is dense in $\PP$, and furthermore that the sequence $\<D_{\dot z}\mid\dot z\in\dom(\dot x)>$ is symmetric, since $\pi[D_{\dot z}]=D_{\pi(\dot z)}$ whenever $\pi\in\sym(\dot x)\cap\sym(\dot y)$ and $\dot z\in\dom(\dot x)$, by the symmetry lemma for $\witforces$ and $\atforces$ (\cref{claim:at}(4)). We claim that $p\atforces\dot x\subseteq\dot y$, so we need to show that $q\witforces\dot x\subseteq\dot y$ for densely many $q\leq p$. Let $p_0\le p$. By combinatorial pretameness, there is $p_1\leq p_0$ and a sequence $\<d_{\dot z}\mid\dot z\in\dom(\dot x)>\in V$ such that each $d_{\dot z}\subseteq D_{\dot z}$ is predense below $p_1$. Now, for each $\<\dot z,r>\in\dot x$, let $d_{\dot z,r}$ consist of those elements of $d_{\dot z}$ that are compatible with both $p$ and $r$.\footnote{$d_{\dot z,r}$ may not be predense below $p_1$ anymore, but it is still predense below $p_1\land r$.} Then by the definition of $D_{\dot z}$, it follows that $s\witforces\dot z\in\dot y$ whenever $s\in d_{\dot z,r}$.
Using Collection, we can obtain a set $b\in M$ such that whenever $\<\dot z,r>\in\dot x$ and $s\in d_{\dot z,r}$, there is $\<\mathbf f,\vec d>\in b$ with $\mathbf f,\vec d$ witnessing that $s\witforces\dot z\in\dot y$. By \cref{claim:at}(3), there are sets $\mathbf f,\vec d$ witnessing that $s\witforces\dot z\in\dot y$ for all $\<\dot z,r>\in\dot x$ and $s\in d_{\dot z,r}$ simultaneously. For convenience, set $w=\<p_1,\dot x,\subseteq,\dot y>$. Then, letting $\mathbf f'=\mathbf f\cup\{w\}$, $d^w=\{d_{\dot z,r}\mid\<\dot z, r>\in\dot x\}$, and $\vec d'=\vec d\cup\{\<w,d^w>\}$, it follows that $\mathbf f',\vec d'$ witness that $p_1\witforces\dot x\subseteq\dot y$. Hence, $p\atforces\dot x\subseteq\dot y$, as desired.

It is clear from how we defined $\atforces$ that \cref{def:atomic}(\ref{ad:eq}) holds, so $\SSS$ admits a forcing relation for atomic formulas.
\end{proof}

\section{Forcing Relations}\label{section:fr}

In this section, we define what it means to be a forcing relation for a given second-order formula (\cref{def:sofr}), and then show that for a first-order formula with fixed class $\SSS$-name parameters, this gives rise to a forcing relation which is itself a class (\cref{thm:fr}). However, for formulas with second-order quantifiers, these ``forcing relations'' may not exist as classes (unless we assume second-order class comprehension in addition to $\GB$) but they are still (second-order) definable. We then show that either notion of forcing relation functions as expected (Lemmas \ref{lem:sym}-\ref{lem:prov}), and compare these to the analogous forcing relations for $\PP$.

\medskip

Fix a variable $p$. The following definition is meta-theoretic.

\begin{definition}\label{def:sofr}
Suppose that $\SSS$ admits a forcing relation $\atforces$ for atomic formulas. From this we define $\sforces$, which can be viewed as a function \[\varphi(\vec x,\vec X)\quad\mapsto\quad p\sforces\varphi(\vec x,\vec X),\] which maps a second-order formula $\varphi(\vec x,\vec X)$ to a formula expressing ``$p$ forces $\varphi(\vec x,\vec X)$ over $\SSS$''. We define $\sforces$ recursively on the formulas $\varphi$ for which $p,q,r$ are not variables as follows: 
for variables $x,y,X,Y$, define
\begin{enumerate}[leftmargin=*]
\item $p\sforces x\in y$ to be $p\atforces x\in y$,\footnote{The appearance of $\atforces$ in this formula can be dealt with in one of two ways. Either we take $\atforces$ as a class parameter, or we attach a class quantifier to the beginning of the formula: $p\atforces x\in y$ becomes $(\forall X)(X\text{ is an atomic forcing relation for }\SSS\;\to\;\<p,x,\in,y>\in X)$ or $(\exists X)(X\text{ is an atomic forcing relation for }\SSS\;\land\;\<p,x,\in,y>\in X)$ (either is fine by \cref{lem:uniqueat}). Here ``$X$ is an atomic forcing relation for $\SSS$'' is a first-order formula asserting that $X$ satisfies the recursion in \cref{def:atomic}.\label{fn:atforces}}
\item $p\sforces x=y$ to be $p\atforces x=y$,
\item $p\sforces x\in X$ to be $(\forall q\leq p)(\exists r\leq q)(\exists\<\dot z,s>\in X)(r\leq s\;\land\;r\atforces x=\dot z)$,
\item $p\sforces X=Y$ to be $p\sforces(\forall x)(x\in X\ifff x\in Y)$\footnote{Here $\forall x\,\varphi$ abbreviates $\neg(\exists x)(\neg\varphi)$ and $\varphi\ifff\psi$ abbreviates $\neg(\varphi\land\neg\psi)\land\neg(\psi\land\neg\varphi)$; see (5)-(7) below. Note that this is not circular, since (5)-(7) do not depend on (4).};
\end{enumerate}
and, having defined $p\sforces\varphi(\vec x,\vec X)$ and $p\sforces\psi(\vec y,\vec Y)$, define
\begin{enumerate}[leftmargin=*]
\setcounter{enumi}{4}
\item $p\sforces(\varphi\land\psi)(\vec x\cup\vec y,\vec X\cup\vec Y)$ to be $(p\sforces\varphi(\vec x,\vec X))\land(p\sforces\psi(\vec y,\vec Y))$,
\item $p\sforces\neg\varphi(\vec x,\vec X)$ to be $(\forall q\leq p)\neg(q\sforces\varphi(\vec x,\vec X))$,
\item $p\sforces\exists z\varphi(\vec x,\vec X)$ to be $(\forall q\leq p)(\exists r\leq q)(\exists z\in\HS)(r\sforces\varphi(\vec x,\vec X))$,
\item $p\sforces\exists Z\varphi(\vec x,\vec X)$ to be $(\forall q\leq p)(\exists r\leq q)(\exists Z)(Z\text{ is an $\SSS$-name}\;\land\;r\sforces\varphi(\vec x,\vec X))$.
\end{enumerate}
\end{definition}

In the following, given a formula $\varphi$, let $\Godelnum\varphi$ be some fixed internalization of $\varphi$ that keeps track of the free variables of $\varphi$, for example via G\"odel numbering or parse trees.

\begin{definition}\label{def:fr}
A class relation $\Gforces$ is a \emph{forcing relation} (for $\SSS$) for a list of formulas $\varphi_1,\ldots,\varphi_n$ with a fixed assignment $\vec\Gamma$ of the free second-order variables of $\varphi_1,\ldots,\varphi_n$ to class $\SSS$-names\footnote{From here on, we write this as ``fixed class $\SSS$-name parameters'' to prevent clutter.} if
\[
\<p,\Godelnum{\varphi_i},\vecdot x>\in\Gforces\iffq p\sforces\varphi_i(\vecdot x,\vec\Gamma')
\]
for all $i\in\{1,\ldots,n\}$, $p\in P$, and $\vecdot x\in\HS$, where $\vec\Gamma'\subseteq\vec\Gamma$ is the assignment of the free variables of $\varphi_i$ to class $\SSS$-names. When this is the case, we write $\<p,\Godelnum{\varphi_i},\vecdot x>\in\Gforces$ as $p\Sforces\varphi_i(\vecdot x,\vec\Gamma')$.
\end{definition}

When the list of formulas above only contains first-order formulas and is closed under sub-formulas, this definition can be internalized: we could then say that a class $\Sforces$ is a forcing relation when it satisfies the semantic analogue of the (first-order part of the) syntactic recursion in \cref{def:sofr} (for some fixed class $\SSS$-name parameters) (cf. \cite[Definition 4]{ghhsw}).

\begin{theorem}\label{thm:fr}
If $\SSS$ admits a forcing relation $\atforces$ for atomic formulas, then $\SSS$ admits a forcing relation for any finite list of first-order formulas with fixed class $\SSS$-name parameters.
\end{theorem}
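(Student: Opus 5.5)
We prove this by first closing the given list $\varphi_1,\ldots,\varphi_n$ under sub-formulas. This gives a finite list $\psi_1,\ldots,\psi_m$, ordered so that every sub-formula appears before any formula containing it. We then show by induction on $k\le m$ that there is a class forcing relation $\Gforces_k$ for $\psi_1,\ldots,\psi_k$ with the fixed class $\SSS$-name parameters $\vec\Gamma$. The final relation is then $\Gforces_m$, and by restricting to the entries for $\varphi_1,\ldots,\varphi_n$ we obtain the desired relation. Each step has the same shape. We define, by a single instance of first-order class comprehension, the class of all $\<p,\Godelnum{\psi_{k+1}},\vecdot x>$ with $p\in P$ and $\vecdot x\in\HS$ that satisfy the clause of \cref{def:sofr} for $\psi_{k+1}$. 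In that clause, every occurrence of $q\sforces\chi$, for a proper sub-formula $\chi$, is replaced by membership in the already-constructed class $\Gforces_k$. Adding this class to $\Gforces_k$ gives $\Gforces_{k+1}$. Because the list is finite and indexed in the metatheory, we only use finitely many comprehension instances, so this stays inside $\GB$.

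We now check that each clause, once the sub-formula relations are classes, is first-order in the parameters $P,\le,\G,\F,\atforces,\vec\Gamma,\Gforces_k$. The atomic set cases (1) and (2) are literally membership in $\atforces$, which is a class by assumption. Clause (3), for $x\in X$, quantifies only over sets $r,s,\dot z$ and over membership $\<\dot z,s>\in X$, where $X$ is replaced by the fixed class name $\Gamma$; it uses $\atforces$ again. Clauses (5) and (6) involve only set quantifiers over conditions together with the sub-formula relations. Clause (7) quantifies over $z\in\HS$, and $\HS$ is itself a first-order definable class: hereditary symmetry is defined by recursion along set-sized names, and the filter membership $\{\pi\mid\pi(\dot x)=\dot x\}\in\F$ is first-order, as noted after \cref{def:system}. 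No clause (8) arises, because every formula is first-order.

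We treat clause (4), for $X=Y$, separately: it unfolds into the formula $\forall x(x\in X\ifff x\in Y)$, so when forming the sub-formula closure we use this unfolded formula and its sub-formulas. After that, all the relevant sub-formulas are handled by (3), (5), (6) and (7), and the instance for $X=Y$ is literally defined to be the relation for the unfolded formula. We allow free set variables and take $\vecdot x$ to range over $\HS$. Then an easy induction on $k$ shows that $\<p,\Godelnum{\psi_i},\vecdot x>\in\Gforces_k$ holds iff $p\sforces\psi_i(\vecdot x,\vec\Gamma')$, since the two sides satisfy the same clause and agree on sub-formulas by the inductive hypothesis.

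The main obstacle is really bookkeeping rather than mathematics. We must make sure that nothing requires recursion over \emph{classes of unbounded complexity}: the only genuine recursion, the one over name rank, is confined to $\atforces$, which the hypothesis hands us. We must also check that the $\HS$ quantifier in (7) and the use of class names in (3) are first-order. In addition, we have to verify that treating (4) through its unfolding introduces no circularity, and that the induction is carried out in the metatheory, over the finite list, and not internally. The internal version over all formulas would fail in $\GB$, since it would need a truth-predicate-like recursion over formula complexity.
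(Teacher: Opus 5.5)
Your proposal is correct and takes essentially the paper's approach. Both unfold the recursion of \cref{def:sofr} in the metatheory, with $\atforces$ and $\vec\Gamma$ as class parameters, check that every clause is first-order (including that $\HS$ is first-order definable), and then apply first-order comprehension. The only cosmetic difference is that you build the relation step by step along the finite sub-formula closure, using the previously constructed class as a parameter, whereas the paper observes that the fully unfolded formula $p\sforces\varphi$ is itself first-order and applies comprehension to it directly.
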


\begin{proof}
Looking at \cref{def:sofr} (and choosing to take $\atforces$ as a class parameter, as discussed in \cref{fn:atforces}), it is clear by meta-theoretic induction on formulas that for any first-order formula $\varphi(\vec x,\vec X)$, the formula $p\sforces\varphi(\vec x,\vec X)$ is also first-order. Thus, the result follows easily from first-order comprehension in $\GB$, since we have assigned the second-order variables to fixed class $\SSS$-names.
\end{proof}

When the conclusion (or equivalently, the assumption) of \cref{thm:fr} holds, we say that $\SSS$ \emph{admits forcing relations} and let $\Sforces$ denote the forcing relations of the system $\SSS$ -- this is of course the usual abuse of notation, since we only get class forcing relations on the condition that we have fixed some assignments of second-order variables. Even without class parameters, it may be the case that no single forcing relation works on all first-order formulas simultaneously. To reflect this, in what follows, we shall use $\vec\Gamma,\vec\Pi,\dot\Gamma,\dot\Gamma_1,\ldots$ to refer to fixed class $\SSS$-name parameters, and $\vecdot X,\vecdot Y,\dot X,\dot Y,\ldots$ to refer to general assignments of variables to class $\SSS$-names that are not fixed.

The following is immediate from \cref{lem:soatomic} and Definitions \ref{def:sofr}\&\ref{def:fr}.

\begin{lemma}\label{thm:sofr}
Assume second-order class comprehension\footnote{Given a finite list of formulas, only finitely many instances of the second-order class comprehension scheme are needed for this result.}. Then $\SSS$ admits a forcing relation for any finite list of second-order formulas with fixed class $\SSS$-name parameters.\hfill{$\qed$}
\end{lemma}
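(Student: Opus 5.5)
The plan is to derive this directly from \cref{lem:soatomic} together with Definitions~\ref{def:sofr} and~\ref{def:fr}, as the paper indicates. Fix a finite list $\varphi_1,\ldots,\varphi_n$ of second-order formulas and fixed class $\SSS$-name parameters $\vec\Gamma$. First, I will invoke \cref{lem:soatomic} to obtain a class $\atforces$ that is a forcing relation for atomic formulas for $\SSS$. By \cref{lem:uniqueat}, it does not matter which such class we take. I will choose to treat $\atforces$ as a class parameter, as in \cref{fn:atforces}.

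Next, I will argue by meta-theoretic induction on the formulas in \cref{def:sofr} that, for each $\varphi_i$, the expression $p\sforces\varphi_i(\vec x,\vec X)$ is a second-order formula in the variables $p,\vec x,\vec X$ with the class parameters $P,\le,C_\G,C_\F,\atforces$. The only clause that introduces a class quantifier is (8), which quantifies over classes $Z$ that are $\SSS$-names. The condition ``$Z$ is an $\SSS$-name'' is itself first-order expressible in these parameters, via the normal filter base $C_\F$ as discussed after \cref{def:system}. Substituting the fixed names $\vec\Gamma$ for the free class variables $\vec X$ then yields, for each $i$, a second-order formula $\psi_i(p,\vec x)$ with class parameters only.

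Finally, I will apply one instance of second-order class comprehension to define
\[
\Gforces=\bigcup_{i\le n}\{\<p,\Godelnum{\varphi_i},\vecdot x>\mid p\in P\land\vecdot x\in\HS\land\psi_i(p,\vecdot x)\}.
\]
This is a finite union of comprehension-defined classes, so it is again a class, or it can be obtained directly using a single disjunctive formula. Since $\Godelnum{\varphi_i}$ keeps track of free variables, the clauses for different $i$ do not interfere. Therefore $\<p,\Godelnum{\varphi_i},\vecdot x>\in\Gforces$ holds if and only if $p\sforces\varphi_i(\vecdot x,\vec\Gamma')$, which is exactly the requirement of \cref{def:fr}.

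There is no real obstacle here. The only points needing care are the bookkeeping ones: we must make sure that only finitely many comprehension instances are used, namely those from \cref{lem:soatomic} and the single one above, matching the footnote. We must also fix a choice in \cref{fn:atforces}, either the parameter version or the quantifier version, since the two are equivalent by \cref{lem:uniqueat}.
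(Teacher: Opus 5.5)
Your proposal is correct and follows the same route as the paper, which simply declares the lemma immediate from \cref{lem:soatomic} and Definitions~\ref{def:sofr} and~\ref{def:fr}. That route is: obtain $\atforces$ from \cref{lem:soatomic}, note that $p\sforces\varphi_i(\vec x,\vec\Gamma)$ unfolds to a second-order formula in the class parameters, and collect the instances into a class by second-order comprehension. Your extra bookkeeping spells out what the paper leaves implicit, namely that ``$Z$ is an $\SSS$-name'' is first-order expressible and that the parameter and quantifier readings of $\atforces$ agree by \cref{lem:uniqueat}.
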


We now show that our notion of forcing relation works in the same way as we are used to. The following are actually schemes of lemmas, one for each formula $\varphi(\vec x,\vec X)$.

\begin{lemma}\label{lem:sym}
If $\SSS$ admits forcing relations, then $\sforces$ satisfies the symmetry lemma: for $\pi\in\G$, $p\in P$, and $\SSS$-names $\vecdot x,\vecdot X$,
\[
p\sforces\varphi(\vecdot x,\vecdot X)\iffq\pi(p)\sforces\varphi(\pi(\vecdot x),\pi(\vecdot X)).
\]
Consequently, $\Sforces$ also satisfies the symmetry lemma on first-order formulas with fixed class $\SSS$-name parameters.
\end{lemma}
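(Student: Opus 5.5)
We prove the symmetry lemma by meta-theoretic induction on the complexity of the formula $\varphi$, following the clauses of \cref{def:sofr}. The induction is meta-theoretic: for each $\varphi$ we get one lemma of the scheme. The statement is uniform in $\pi\in\G$ and in the names, so the induction hypothesis may be applied to $\pi^{-1}$ as well, since $\G$ is a group. Hence it suffices to prove one direction in each case; the converse follows by applying that direction to $\pi^{-1}$, using $\pi^{-1}(\pi(\dot x))=\dot x$. We also record two preliminary facts. First, $\pi$ restricted to $\HS$ is a bijection of $\HS$ onto itself; this holds because $\sym(\pi(\dot x))=\pi\,\sym(\dot x)\pi^{-1}$ and $\F$ is normal. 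Second, $\pi$ maps class $\SSS$-names to class $\SSS$-names. The class $\pi(\dot X)$ is obtained from $\dot X$ by first-order comprehension, using $C_\G$ and the recursive action on set names. Symmetry of $\pi(\dot X)$ follows from normality in the same way, via the parametrized base $H_a$.

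\emph{Atomic cases.} Clauses (1) and (2) are exactly \cref{lem:atsymm}. For clause (3), suppose $p\sforces\dot x\in\dot X$, and let $q'\le\pi(p)$. Put $q=\pi^{-1}(q')\le p$, and pick $r\le q$ and $\<\dot z,s>\in\dot X$ with $r\le s$ and $r\atforces\dot x=\dot z$. Then $\pi(r)\le q'$ and $\pi(r)\le\pi(s)$. Moreover $\<\pi(\dot z),\pi(s)>\in\pi(\dot X)$, and $\pi(r)\atforces\pi(\dot x)=\pi(\dot z)$ by \cref{lem:atsymm}. Clause (4) is defined via clauses (5)--(7) applied to formulas of the form $x\in X$, so it follows from those cases once they are established. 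To avoid circularity, we order the induction so that (4) is treated as the compound formula $\forall x(x\in X\ifff x\in Y)$.

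\emph{Inductive steps.} Conjunction is immediate. For negation, assume $p\sforces\neg\varphi$, and suppose some $q'\le\pi(p)$ satisfies $q'\sforces\varphi(\pi\vecdot x,\pi\vecdot X)$. Applying the induction hypothesis with $\pi^{-1}$ gives $\pi^{-1}(q')\sforces\varphi(\vecdot x,\vecdot X)$, and $\pi^{-1}(q')\le p$, which is a contradiction. Here we use that $\pi$ is an automorphism of $\PP$, so it preserves $\le$ in both directions. For the set quantifier, suppose $p\sforces\exists z\varphi$, and let $q'\le\pi(p)$. Pull back to $q=\pi^{-1}(q')$ and obtain $r\le q$ and $\dot z\in\HS$ with $r\sforces\varphi(\dot z,\vecdot x,\vecdot X)$. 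Then $\pi(\dot z)\in\HS$ witnesses the required statement below $q'$, by the induction hypothesis. The class quantifier is handled the same way, using that $\pi(\dot Z)$ is a class $\SSS$-name.

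\emph{Consequence for $\Sforces$.} For first-order formulas with fixed class parameters $\vec\Gamma$, apply the above with $\vecdot X=\vec\Gamma$. Note that $\Sforces$ with parameters $\vec\Gamma$ and $\Sforces$ with parameters $\pi(\vec\Gamma)$ may be different classes. However, by \cref{def:fr} each of them agrees with $\sforces$, so the equivalence transfers directly.

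\emph{Main obstacle.} The main point requiring care is the closure of class $\SSS$-names and of $\HS$ under $\pi$, which rests on normality of $\F$ in its parametrized form. We will verify this using the clause ``for every $d$ there is $c$ with $H_c\subseteq\pi_dH_a\pi_d^{-1}$''. With that in hand, everything else is routine.
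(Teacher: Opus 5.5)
Your proof is correct and follows the same route as the paper: a meta-theoretic induction on formula complexity along the clauses of \cref{def:sofr}, with the atomic cases reduced to \cref{lem:atsymm} and the quantifier steps handled by transporting witnesses along $\pi$. You also make explicit several points the paper leaves implicit: the closure of $\HS$ and of class $\SSS$-names under $\pi$ via normality of $\F$, the reduction of each converse direction to $\pi^{-1}$, and the observation that the $\Sforces$ relations for $\vec\Gamma$ and for $\pi(\vec\Gamma)$ may be different classes.
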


\begin{proof}
The atomic case follows from \cref{def:sofr} and \cref{lem:atsymm}. The general case is shown by the usual induction on formula complexity in the meta-theory, using the equivalences in \cref{def:sofr}. For example, if $p\sforces(\exists Z\,\varphi)(\vecdot x,\vecdot X)$ (where $Z$ appears free in $\varphi$, the alternative being easier), then for all $q\leq p$ there is $r\leq q$ and an $\SSS$-name $\dot Z$ with $r\sforces\varphi(\vecdot x,\vecdot X\concatt\dot Z))$. But then by the induction hypothesis, given $\pi\in\G$, for all $\pi(q)\leq\pi(p)$ there is $\pi(r)\leq\pi(q)$ and an $\SSS$-name $\pi(\dot Z)$ with $\pi(r)\sforces\varphi(\pi(\vecdot x),\pi(\vecdot X)\concatt\pi(\dot Z))$. Then clearly $\pi(p)\sforces(\exists Z\,\varphi)(\pi(\vecdot x),\pi(\vecdot X))$.
\end{proof}

\newpage
\begin{lemma}\label{lem:str}
Suppose $\SSS$ admits forcing relations and $\vecdot x,\vecdot X$ are $\SSS$-names.
\begin{enumerate}
\item If $p\sforces\varphi(\vecdot x,\vecdot X)$ and $q\leq p$, then $q\sforces\varphi(\vecdot x,\vecdot X)$.
\item If whenever $q\leq p$, there is $r\leq q$ with $r\sforces\varphi(\vecdot x,\vecdot X)$, then $p\sforces\varphi(\vecdot x,\vecdot X)$.
\end{enumerate}
The same holds for $\Sforces$ when $\varphi$ is a first-order formula with fixed class $\SSS$-name parameters.
\end{lemma}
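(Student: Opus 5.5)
We prove (1) and (2) simultaneously by meta-theoretic induction on the complexity of $\varphi$, following the clauses of \cref{def:sofr}. The base cases reduce to properties of $\atforces$. For $x\in y$, $x=y$ (and hence $x\subseteq y$), the relation $p\atforces\dot x\R\dot y$ is characterized in \cref{def:atomic} by a density condition below $p$ (or below $p\wedge r$). Density below $p$ passes to density below any $q\le p$, which gives (1).

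For (2), suppose densely many $r\le p$ satisfy $r\atforces\dot x\R\dot y$. For $\R$ being $\in$, density of the witnessing class below each such $r$ yields density below $p$, since "dense below dense-many" is dense. For $\subseteq$, fix $\<\dot z,s>\in\dot x$ and $t\le p,s$. Pick $r\le t$ with $r\atforces\dot x\subseteq\dot y$. Then the class $\{q\mid q\atforces\dot z\in\dot y\}$ is dense below $r\wedge s$, and since $r\le s$ it contains an element below $r\le t$. The case $=$ follows by conjunction.

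Clause (3), $p\sforces x\in X$, is explicitly of the form ``for all $q\le p$ there is $r\le q$ with $\dots$''. Such a form is downward closed, and it is also closed under the density hypothesis of (2): given $q\le p$, first take $r'\le q$ forcing the statement, then apply the definition at $r'$ with $q$ replaced by $r'$. Clauses (6), (7) and (8) have the same shape and are handled identically. For (6), downward closure is immediate from the quantifier $(\forall q\le p)$. For (2) in the negation case, if some $q\le p$ had $q\sforces\varphi$, choose $r\le q$ with $r\sforces\neg\varphi$; by the induction hypothesis (1) for $\varphi$ we get $r\sforces\varphi$, contradicting $r\sforces\neg\varphi$ applied to $r\le r$. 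Clause (5) follows directly from the induction hypothesis for both conjuncts. Clause (4) is defined via a formula built by (5)–(7), so it is covered once those cases are done; its subformulas are of lower complexity in the recursion.

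The only mild subtlety is the atomic $\subseteq$ case, where the density is below $p\wedge r$ rather than below $p$; it is handled as above. For the final sentence, \cref{def:fr} says $p\Sforces\varphi(\vecdot x,\vec\Gamma')$ holds exactly when $p\sforces\varphi(\vecdot x,\vec\Gamma')$ does. So the statements for $\Sforces$ are instances of those just proved for $\sforces$ with the class parameters fixed to $\vec\Gamma$.
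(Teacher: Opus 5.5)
Your proof is correct and follows essentially the same route as the paper: induction on formula complexity, with the atomic and ``for all $q\le p$ there is $r\le q$'' clauses handled directly from the density form of the definitions, conjunction by the induction hypothesis, and the negation case of (2) obtained by applying (1) to the subformula. The only differences are presentational: you run (1) and (2) as a simultaneous induction, and you spell out the atomic $\subseteq$ case and the reduction of $X=Y$ to clauses (5)--(7), both of which the paper leaves implicit.
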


\begin{proof}
(1) is shown by induction on formula complexity. In fact, only the conjunction step requires (easy) use of the induction hypothesis; the rest follow directly from Definitions \ref{def:atomic}\&\ref{def:sofr}, since any collection of conditions dense below $p\in P$ is also dense below $q\leq p$. For example, if $q\leq p$ and $p\sforces\dot x\in\dot X$, then for any $r\leq p$, there is $s\leq r$ and $\<\dot z,t>\in\dot X$ with $s\leq t$ and $s\atforces\dot x=\dot z$; but this is certainly the case for any $r\leq q$ also, so $q\sforces\dot x\in\dot X$.

(2) is also shown by induction on formula complexity. Most steps follow directly from the definition without appealing to the inductive hypothesis; for example, if there are densely many $q\leq p$ with $q\sforces\dot x\in\dot X$, then certainly there are densely many $q\leq p$ with some $\<\dot z,r>\in\dot X$ such that $q\leq r$ and $q\atforces\dot x=\dot z$, so $p\sforces\dot x\in\dot X$. Only the conjunction and negation steps are not immediate in this way. While the former simply uses the inductive hypothesis, we provide the latter for completeness. Suppose, for contradiction, that there are densely many $q\leq p$ with $q\sforces\neg\varphi(\vecdot x,\vecdot X)$, but that $\neg(p\sforces\neg\varphi(\vecdot x,\vecdot X))$. Then, by \cref{def:sofr}, there is $r\leq p$ with $r\sforces\varphi(\vecdot x,\vecdot X)$, so by (1) we have $s\sforces\varphi(\vecdot x,\vecdot X)$ for all $s\leq r$. Our density assumption however gives us some $q\leq r$ such that $q\sforces\neg\varphi(\vecdot x,\vecdot X)$, which contradicts \cref{def:sofr}. Thus, $p\sforces\neg\varphi(\vecdot x,\vecdot X)$.
\end{proof}

The following lemma will implicitly be used all the time in the arguments that follow, as is usually the case for the analogous result for set forcing.

\begin{lemma}\label{lem:prov2}
If $\SSS$ admits forcing relations, then $\sforces$ is closed under provability.

In other words, if $\varphi\to\psi$ is a theorem of predicate logic\footnote{I.e., there is a proof of $\psi$ from $\varphi$ in a standard deduction system for predicate logic.}, and $p\sforces\varphi(\vecdot x,\vecdot X)$ for some $p\in P$ and $\SSS$-names $\vecdot x,\vecdot X$, then $p\sforces\psi(\vecdot y,\vecdot Y)$ (where $\vecdot y,\vecdot Y$ are $\SSS$-names agreeing on any shared variables\footnote{As noted before, we actually see $\vecdot x,\vecdot y,\vecdot X,\vecdot Y$ as assignments of free variables to $\SSS$-names.} with $\vecdot x,\vecdot X$).
\end{lemma}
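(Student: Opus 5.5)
The plan is the standard one for set forcing: introduce a derived notion and check it is closed under the rules of a fixed deduction system. Fix a Hilbert-style calculus for (two-sorted) predicate logic whose only rule is modus ponens, together with generalization (or use the variant where generalization is built into the axioms). Say that a formula $\chi$ is \emph{valid for $\SSS$} if for every $p\in P$ and every assignment of $\SSS$-names $\vecdot x,\vecdot X$ to its free variables, $p\sforces\chi(\vecdot x,\vecdot X)$. Then prove, by meta-theoretic induction on the length of proofs, that every theorem of predicate logic is valid for $\SSS$. The lemma follows by applying this to $\varphi\to\psi$, that is, to $\neg(\varphi\land\neg\psi)$. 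If $p\sforces\varphi$ but not $p\sforces\psi$, then by \cref{lem:str}(2) there is $q\le p$ such that no $r\le q$ forces $\psi$, so $q\sforces\neg\psi$. Together with $q\sforces\varphi$ (by \cref{lem:str}(1)), this gives $q\sforces\varphi\land\neg\psi$, which contradicts $p\sforces\neg(\varphi\land\neg\psi)$. Variables that occur in only one of $\varphi,\psi$ are handled by first substituting arbitrary names, which is harmless since a formula's forcing does not depend on names assigned to variables not free in it.

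The inductive step for modus ponens is exactly the argument just given. Generalization is immediate, because validity quantifies over all assignments. The unfolding of $\forall x$ as $\neg\exists x\neg$ gives: $p\sforces\forall x\,\theta$ iff for every name $\dot z\in\HS$, $p\sforces\theta(\dot z)$. This uses \cref{lem:str}, and the analogous fact holds for class variables and $\SSS$-names.

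For the propositional axioms, I will first establish the usual semantic facts from \cref{def:sofr} and \cref{lem:str}:
\begin{itemize}
\item $p\sforces\neg\neg\theta$ iff $p\sforces\theta$;
\item no $p$ forces both $\theta$ and $\neg\theta$;
\item $p\sforces\theta\to\chi$ iff every $q\le p$ forcing $\theta$ has densely many $r\le q$ forcing $\chi$, equivalently, every $q\le p$ forcing $\theta$ forces $\chi$.
\end{itemize}
With these, each propositional axiom scheme reduces to a routine check.

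The quantifier axioms need substitution, and I expect them to be the main obstacle. Take $\forall x\,\theta\to\theta(t/x)$, where $t$ is a variable substitutable for $x$. Its validity needs a substitution lemma: $p\sforces\theta(t/x)$ under an assignment $s$ iff $p\sforces\theta$ under $s[x\mapsto s(t)]$. This is proved by a separate induction on $\theta$, and it is purely syntactic because $\sforces$ is defined compositionally with the variables acting as placeholders for names. The other axiom, $\forall x(\chi\to\theta)\to(\chi\to\forall x\,\theta)$ with $x$ not free in $\chi$, also needs the fact that forcing $\chi$ is independent of the name assigned to $x$. The same arguments go through for class variables using clause (8). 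I also need the equality axioms, namely reflexivity and the substitution of equals in atomic formulas, and these are the genuinely non-logical part. Reflexivity for sets is \cref{lem:atom}(2), and for classes it follows from clause (4). For substitution I need that $p\atforces\dot x=\dot y$ and $p\atforces\dot x\in\dot z$ give $p\atforces\dot y\in\dot z$, and similarly for class names and transitivity/symmetry of $=$. These go by a simultaneous induction on name rank using the recursive clauses of \cref{def:atomic}, as in the set-forcing case, and the class versions reduce to the set versions via clause (3).
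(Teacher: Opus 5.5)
Your proposal is correct and follows essentially the same route as the paper: induction on Hilbert-style derivations (modus ponens and generalization), using \cref{lem:str} to characterize forcing of $\to$ and $\forall$, a syntactic substitution argument for the quantifier axioms, and a name-rank induction (starting from \cref{lem:atom}(2)) for the equality axioms, with the class versions reduced to the set versions via clauses (3) and (4) of \cref{def:sofr}. The only cosmetic difference is that the paper's calculus states substitution of equals for arbitrary formulas, so the paper adds a routine induction on formula complexity on top of your atomic case.
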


\begin{proof}
This is shown by induction on proofs in a standard deduction system for predicate logic (with equality). We choose a Hilbert-style proof system (see \cite[Section~2.3]{mendelson}) with the inference rules:
\begin{itemize}[leftmargin=*]
\item Modus ponens: $\psi$ follows from $\varphi$ and $\varphi\to\psi$,
\item Generalization: $(\forall x\,\varphi)$ and $(\forall X\,\varphi)$ follow from $\varphi$;
\end{itemize}
and the logical axiom schemes: for any second-order formulas $\varphi,\psi,\xi$ and variables $x,y$,
\begin{enumerate}[leftmargin=*]
\item $\varphi\to(\psi\to\varphi)$,
\item $(\varphi\to(\psi\to\xi))\to((\varphi\to\psi)\to(\varphi\to\xi))$,
\item $(\neg\psi\to\neg\varphi)\to((\neg\psi\to\varphi)\to\psi)$,
\item $(\forall x\,\varphi)\to\varphi[y/x]$\footnote{We define $\varphi[y/x]$ to be the formula where every free occurrence of $x$ is replaced by $y$.}, when $y$ does not become bound in $\varphi[y/x]$,
\item $(\forall x)(\varphi\to\psi)\to(\varphi\to(\forall x\,\psi))$, when $\varphi$ contains no free occurrences of $x$,
\item $x=x$,
\item $x=y\to(\varphi\to\varphi[y/x])$, when $y$ does not become bound in $\varphi[y/x]$;
\end{enumerate}
plus the class analogues for axiom schemes (4)-(7): for any variables $X,Y$,
\begin{enumerate}[leftmargin=*]
\setcounter{enumi}{7}
\item $(\forall X\,\varphi)\to\varphi[Y/X]$, when $Y$ does not become bound in $\varphi[Y/X]$,
\item $(\forall X)(\varphi\to\psi)\to(\varphi\to(\forall X\,\psi))$, when $\varphi$ contains no free occurrences of $X$,
\item $X=X$,
\item $X=Y\to(\varphi\to\varphi[Y/X])$, when $Y$ does not become bound in $\varphi[Y/X]$.
\end{enumerate}

Since the axiom schemes above use the symbols ``$\to$'' and ``$\forall$'', whereas we have used ``$\land$'' and ``$\exists$'' in our analysis, we see $\varphi\to\psi$ as an abbreviation for $\neg(\varphi\land\neg\psi)$ and $\forall x\,\varphi$ as an abbreviation for $\neg(\exists x\,\neg\varphi)$ (and analogously for second-order quantifiers). Then, by \cref{def:sofr} and \cref{lem:str},
\begin{align*}
p\sforces(\varphi\to\psi)&\iff(\forall q\leq p)(\exists r\leq q)(r\sforces\neg\varphi\;\lor\;r\sforces\psi)\\
&\iff(\forall q\leq p)(q\sforces\varphi\;\to\;q\sforces\psi),\\
p\sforces(\forall x\,\varphi)&\iff(\forall x\in\HS)(p\sforces\varphi),\\
p\sforces(\forall X\,\varphi)&\iff(\forall X)(X\text{ is an $\SSS$-name}\to p\sforces\varphi).
\end{align*}

We show that $\sforces$ respects the inference rules. Modus ponens follows easily from our characterization of $p\sforces(\varphi\to\psi)$ -- if $p\sforces\varphi(\vecdot x,\vecdot X)$ and $p\sforces(\varphi\to\psi)(\vecdot x\cup\vecdot y,\vecdot X\cup\vecdot Y)$, then certainly $p\sforces\psi(\vecdot y,\vecdot Y)$ (where $\vecdot x,\vecdot X,\vecdot y,\vecdot Y$ are assignments of the free variables of $\varphi,\psi$ respectively to $\SSS$-names). Note that, as we have already implicitly done until now, we assert $p\sforces\varphi(\vec x,\vec X)$ (with the variables $\vec x,\vec X$ unassigned) in a particular context exactly when $p\sforces\varphi(\vecdot x,\vecdot X)$ for every assignment $\vecdot x,\vecdot X$ of $\vec x,\vec X$ to $\SSS$-names in that context. Thus, by the previous paragraph, if $p\sforces\varphi$, then both $p\sforces(\forall x\,\varphi)$ and $p\sforces(\forall X\,\varphi)$; i.e., Generalization holds.

The next step is to show that each instance of the schemes of logical axioms, with free variables assigned to $\SSS$-names, is forced by $\1$. We give some examples, and leave the rest to the reader, since they follow from \cref{def:sofr} and \cref{lem:str} in much the same way as for set forcing over $\ZF$. We suppress the $\SSS$-names for readability.

For (1), observe that
\begin{align*}
\1\sforces(\varphi\to(\psi\to\varphi))&\iffq(\forall p\in P)(\exists q\leq p)(q\sforces\neg\varphi\;\lor\;q\sforces(\psi\to\varphi))\\
&\iffq(\forall p\in P)(\exists q\leq p)(q\sforces\neg\varphi\;\lor\;q\sforces\neg\psi\;\lor\;q\sforces\varphi)).
\end{align*}
But there are certainly densely many $q\in P$ for which either $q\sforces\neg\varphi$ or $q\sforces\varphi$; this follows directly from \cref{def:sofr}. So $\1\sforces(\varphi\to(\psi\to\varphi))$. (2) and (3) are proved similarly.

For (4), suppose that $x$ appears free in $\varphi$ (the alternative is easy), and that $y$ does not become bound in $\varphi[y/x]$. Let $\vecdot x'$ be the assignment of all free set variables of $\varphi$ except $x$; we assume that this includes $y$, since that is the interesting case. Let $p\in P$, and suppose that $p\sforces(\forall x\,\varphi)(\vecdot x')$, so for all $\dot x\in\HS$, we have $p\sforces\varphi(\vecdot x'\concatt\dot x)$. Then certainly $p\sforces\varphi[y/x](\vecdot x')$, since this just amounts to assigning $x$ to the $\SSS$-name assigned to $y$. (5) and (8)-(9) are proved in a similar fashion.

We already have (6) as a result in \cref{lem:atom}(2), and (10) follows immediately from \cref{def:sofr}(4) and our characterizations of $p\sforces\forall x\,\varphi$ and $p\sforces\varphi\to\psi$ above. (7) is shown by induction on both formula complexity and name rank -- this argument is the same as the analogous one for set forcing -- and then (11) follows by induction on formula complexity. 
\end{proof}

In fact, we can restrict our attention to $\Sforces$, and still get the same result:

\begin{corollary}\label{lem:prov}
If $\SSS$ admits forcing relations, then $\Sforces$ is closed under provability.

In other words, if the first-order formula $\varphi\to\psi$ is a theorem of predicate logic and $p\Sforces\varphi(\vecdot x,\vec\Gamma)$ for some $p\in P$, $\dot x\in\HS$, and class $\SSS$-name parameters $\vec\Gamma$, then $p\Sforces\psi(\vecdot y,\vec\Pi)$ (where $\vecdot y\in\HS$ and $\vec\Pi$ are $\SSS$-name parameters which each agree on any shared variables with $\vecdot x$ and $\vec\Gamma$, respectively).\hfill{$\qed$}
\end{corollary}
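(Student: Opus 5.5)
The plan is to reduce the corollary to \cref{lem:prov2}, using \cref{def:fr}. The definition says that a forcing relation $\Sforces$ for a list of first-order formulas with fixed class $\SSS$-name parameters agrees pointwise with the meta-theoretic $\sforces$. So I would first check that the data in the statement fit this setting. We are given a first-order theorem $\varphi\to\psi$ of predicate logic, and the parameters $\vec\Gamma$ and $\vec\Pi$ are fixed class $\SSS$-names that agree on shared variables. By \cref{thm:fr}, $\SSS$ admits a single forcing relation for the finite list $\varphi,\psi$, with the combined parameter assignment $\vec\Gamma\cup\vec\Pi$. This combined assignment is well defined precisely because of the agreement hypothesis.

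Concretely, suppose $p\Sforces\varphi(\vecdot x,\vec\Gamma)$. By \cref{def:fr}, this is equivalent to $p\sforces\varphi(\vecdot x,\vec\Gamma)$. Now apply \cref{lem:prov2}, taking the assignment of set variables to be $\vecdot x\cup\vecdot y$ and of class variables to be $\vec\Gamma\cup\vec\Pi$. This yields $p\sforces\psi(\vecdot y,\vec\Pi)$. Applying \cref{def:fr} in the reverse direction then gives $p\Sforces\psi(\vecdot y,\vec\Pi)$.

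The main point needing care is that \cref{lem:prov2} is stated for second-order formulas with arbitrary $\SSS$-name assignments. I must make sure that restricting to first-order formulas and to fixed class parameters loses nothing. This holds because the proof of \cref{lem:prov2} runs an induction over a Hilbert-style derivation, and intermediate formulas in such a derivation might involve class quantifiers or other parameters. The conclusion, however, concerns only the endpoint formulas, and for these $\sforces$ and $\Sforces$ coincide by \cref{def:fr}. A further subtlety is that $\Sforces$ is given only relative to a chosen list of formulas. So I would phrase the result as: any forcing relation for a list containing $\varphi$ and $\psi$, with parameters extending $\vec\Gamma\cup\vec\Pi$, satisfies the closure. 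By the uniqueness implicit in \cref{def:fr}, namely agreement with $\sforces$, the choice of relation does not matter.
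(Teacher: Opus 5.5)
Your proposal is correct and follows the paper's own argument: the paper derives the corollary directly from \cref{def:fr}, translating $\Sforces$ into $\sforces$ and back, together with \cref{lem:prov2}, exactly as you do. Your remark that the derivation's intermediate formulas need not belong to the fixed list is the reason the paper routes the argument through $\sforces$ rather than working with $\Sforces$ directly.
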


This follows directly from \cref{def:fr} and \cref{lem:prov2}, and is what we will actually be using in the sections that follow, since we only get the standard truth lemma for $\Sforces$, and not $\sforces$ (see \cref{thm:truth}).

\medskip

We end this section by exploring the relationship between the relations $\Sforces$ and the usual forcing relations $\Pforces$ for $\PP$. This will be useful in situations where $\PP$ is well-behaved (for example, tame: see \cref{th:ptamestame}).

\begin{definition}\label{def:Pfr}
\begin{itemize}
\item We say that $\PP$ \emph{admits a forcing relation for atomic formulas} if there is a class relation $\atforces$ satisfying the recursion in \cref{def:atomic} for \emph{all} $\PP$-names $\dot x,\dot y\in V^{\PP}$.
\item If $\PP$ admits a forcing relation $\atforces$ for atomic formulas, we define $\pforces$ in the same way as $\sforces$ in \cref{def:sofr}, except with all references to $\HS$ and $\SSS$-names replaced by $V^{\PP}$ and $\PP$-names, respectively.
\item \cref{def:fr} is adapted to $\PP$ in the same way.
\end{itemize}
\end{definition}

If $\PP$ admits a forcing relation for any finite list of first-order formulas with fixed class $\PP$-name parameters, we say that $\PP$ \emph{admits forcing relations} and let $\Pforces$ denote the forcing relations of $\PP$.

Many of our results for $\SSS$ have analogous results for $\PP$. The version of \cref{thm:fr} for $\PP$ is given in \cite[Section 4]{hklns}: if $\PP$ admits a forcing relation for atomic formulas, then $\PP$ admits forcing relations. By \cite[Theorem 2.4]{hks}, if $\PP$ is pretame, then $\PP$ admits forcing relations. None of the proofs of Lemmas \ref{lem:sym}-\ref{lem:prov2} and \cref{lem:prov} rely on $\SSS$ being a symmetric system specifically, instead of a forcing notion, so these results hold for $\pforces$ also (when $\PP$ admits forcing relations).

If $\PP$ admits a forcing relation $\atforces$ for atomic formulas, then $\atforces$ restricted to $\HS$ clearly gives a forcing relation for atomic formulas for~$\SSS$. Thus, by \cref{thm:fr}, if $\PP$ admits forcing relations, then so does $\SSS$.

In the following, $\varphi^{\HS^\bullet}$ denotes the relativization of the formula $\varphi$ to the class $\HS^\bullet$.

\begin{lemma}\label{lem:frs}
Suppose that $\PP$ admits forcing relations $\Pforces$, and that consequently, $\SSS$ admits forcing relations $\Sforces$. Then for any given first-order formula $\varphi$ with fixed class $\SSS$-name parameters $\vec\Gamma$,
\[
p\Sforces\varphi(\vecdot x,\vec\Gamma)\quad\iff\quad p\Pforces\varphi^{\HS^\bullet}(\vecdot x,\vec\Gamma)
\]
for all $p\in P$ and $\vecdot x\in\HS$.
\end{lemma}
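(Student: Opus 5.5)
We prove the equivalence by meta-theoretic induction on the complexity of the first-order formula $\varphi$, simultaneously for all $p\in P$ and all assignments $\vecdot x\in\HS$ (with the class parameters $\vec\Gamma$ fixed). Here the relativization $\varphi^{\HS^\bullet}$ is understood in the forcing language of $\PP$. Set quantifiers $\exists z$ become $\exists z\,(z\in\HS^\bullet\land\dots)$, where $\HS^\bullet$ is a class $\PP$-name; class quantifiers do not occur since $\varphi$ is first-order. The class parameters $\vec\Gamma$ are $\SSS$-names, hence in particular $\PP$-names, so both sides make sense. Since both forcing relations are built from the same syntactic recursion (\cref{def:sofr} and its $\PP$-analogue), it suffices to check that each clause matches.

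For the atomic cases $x\in y$ and $x=y$, the atomic relation for $\SSS$ is by construction the restriction to $\HS$ of the atomic relation for $\PP$. More precisely, restricting $\PP$'s atomic relation yields a forcing relation for atomic formulas for $\SSS$, and by \cref{lem:uniqueat} it coincides with whatever atomic relation $\Sforces$ was built from. The clause $x\in\dot\Gamma$ is literally the same formula on both sides, again using that the atomic relations agree on $\HS$. The clause $X=Y$ is defined via the first-order formula $\forall x(x\in X\leftrightarrow x\in Y)$, so it is handled by the inductive cases; note that this uses the relativized quantifier on the $\PP$ side, which is exactly what the relativization produces. Conjunction and negation are immediate from the induction hypothesis, since relativization commutes with them and the defining clauses are identical.

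The only real work is the existential quantifier. We must show that the following two statements are equivalent:
\[
(\forall q\le p)(\exists r\le q)(\exists \dot z\in\HS)\,\bigl(r\Sforces\varphi(\dot z,\dots)\bigr)
\quad\text{and}\quad
p\Pforces\exists z\,\bigl(z\in\HS^\bullet\land\varphi^{\HS^\bullet}(z,\dots)\bigr).
\]

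For the forward direction, given $r$ and $\dot z\in\HS$, we have $\<\dot z,\1>\in\HS^\bullet$, so $r\Pforces\dot z\in\HS^\bullet$ by \cref{lem:atom} (for $\PP$). The induction hypothesis gives $r\Pforces\varphi^{\HS^\bullet}(\dot z,\dots)$. Hence densely many $r$ below $p$ force the existential, and \cref{lem:str} finishes this direction.

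For the converse, below any $q\le p$ we find $r_0\le q$ and a $\PP$-name $\dot y$ with $r_0\Pforces\dot y\in\HS^\bullet\land\varphi^{\HS^\bullet}(\dot y,\dots)$. By the atomic clause for membership, there are $r\le r_0$ and $\<\dot z,\1>\in\HS^\bullet$ (so $\dot z\in\HS$) with $r\atforces\dot y=\dot z$. Closure under provability (\cref{lem:prov} for $\PP$, i.e.\ the equality axiom (7)) then gives $r\Pforces\varphi^{\HS^\bullet}(\dot z,\dots)$. By the induction hypothesis, $r\Sforces\varphi(\dot z,\dots)$, as required.

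I expect this replacement of an arbitrary $\PP$-name witness by an $\HS$-name to be the main point requiring care. One must check that the relativized formula is a legitimate single first-order formula with fixed class parameters $\vec\Gamma,\HS^\bullet$, so that a single forcing relation $\Pforces$ for the finitely many subformulas exists by the $\PP$-version of \cref{thm:fr}. With that in place, the substitution of equals is justified by the logical closure properties already established for $\pforces$.
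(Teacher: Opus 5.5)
Your proposal is correct and follows the paper's proof: induction on $\varphi$, the atomic cases via restricting $\PP$'s atomic relation to $\HS$, and in the existential step replacing a $\PP$-name witness forced into $\HS^\bullet$ by some $\dot z\in\HS$ forced equal to it. The only difference is cosmetic: where the paper appeals to ``a straightforward induction on the complexity of $\varphi$'' to transfer $\varphi^{\HS^\bullet}$ from $\dot y$ to $\dot z$, you cite closure of $\Pforces$ under provability (the substitutivity-of-equality axiom), which is that same induction packaged as a lemma.
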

\begin{proof}
This is shown by induction on formula complexity. We already discussed the atomic case. Clearly, the result also holds for any formula of the form $x\in\dot\Gamma$ with $\SSS$-name $\dot\Gamma$ (in particular, with the $\SSS$-name $\HS^\bullet$).

For the inductive steps, suppose the result holds for the formula $\varphi$ (with class $\SSS$-name parameters $\vec\Gamma$).
The Boolean steps are straightforward; we provide negation as an example. We have $p\Sforces\neg\varphi(\dot x,\vec\Gamma)$ if and only if there is no $q\leq p$ with $q\Sforces\varphi(\dot x,\vec\Gamma)$, if and only there is no $q\leq p$ with $q\Pforces\varphi^{\HS^\bullet}(\dot x,\vec\Gamma)$, if and only if $p\Pforces\neg\varphi^{\HS^\bullet}(\dot x,\vec\Gamma)$ (which is the same as $p\Pforces(\neg\varphi)^{\HS^\bullet}(\dot x,\vec\Gamma)$).

For the quantification step, suppose first that $p\Sforces(\exists z\,\varphi)(\vecdot x,\vec\Gamma)$ (where $z$ appears free in $\varphi$; the alternative is easy). Then
\[
\{q\in P\mid(\exists\dot z\in\HS)(q\Sforces\varphi(\vecdot x\concatt\dot z,\vec\Gamma))\}
\]
is dense below $p$, so certainly $\{q\in P\mid(\exists\dot z\in V^{\PP})(q\Sforces(\dot z\in\HS^\bullet\,\land\,\varphi(\vecdot x\concatt\dot z,\vec\Gamma)))\}$ is dense below $p$. Then the induction hypothesis (and the result for the formula $x\in\HS^\bullet$) imply that $\{q\in P\mid(\exists\dot z\in V^{\PP})(q\Pforces(\dot z\in\HS^\bullet\,\land\,\varphi^{\HS^\bullet}(\vecdot x\concatt\dot z,\vec\Gamma)))\}$ is dense below $p$. So $p\Pforces(\exists z)(z\in \HS^\bullet\,\land\,\varphi^{\HS^\bullet}(\vecdot x\concatt z,\vec\Gamma))$ (i.e.\ $p\Pforces(\exists z\,\varphi)^{\HS^\bullet}(\vecdot x,\vec\Gamma)$). For the reverse direction, suppose that $p\Pforces(\exists z\,\varphi)^{\HS^\bullet}(\vecdot x,\vec\Gamma)$). Then 
\[
\{q\in P\mid(\exists\dot z\in V^{\PP})(q\Pforces(\dot z\in\HS^\bullet\,\land\,\varphi^{\HS^\bullet}(\vecdot x\concatt\dot z,\vec\Gamma)))\}
\]
is dense below $p$, so by \cref{def:Pfr},
\[
\{q\in P\mid(\exists\dot z\in V^{\PP})(\exists\dot u\in\HS)(q\Pforces(\dot z=\dot u\,\land\,\varphi^{\HS^\bullet}(\vecdot x\concatt\dot z,\vec\Gamma)))\}
\]
is dense below $p$. A straightforward induction on the complexity of $\varphi$ then gives us that $\{q\in P\mid(\exists\dot z\in\HS)(q\Pforces\varphi^{\HS^\bullet}(\vecdot x\concatt\dot z,\vec\Gamma)\}$ is dense below $p$. From the induction hypothesis and \cref{def:sofr}(7), we conclude that $p\Sforces(\exists z\,\varphi)(\vecdot x,\vec\Gamma)$.
\end{proof}

\section{The truth lemma}\label{section:truth}

We want to show that the statement commonly known as the truth lemma for forcing holds in the context of class symmetric systems. For a meaningful version of this theorem, we work in an ambient universe $V$ of set theory in which there is a transitive model $\MM=\<M,\in,\C>\models\GB$,\footnote{That is, both $M$ and $\C$ are transitive.} with a domain $M$ of sets and a domain $\C$ of classes, whose elementhood relation $\in$ coincides with (the restriction to $\C$ of) that of $V$.\footnote{Even if $\MM$ does not satisfy these two conditions, as long as its elementhood relation $\in_M$ is set-like on $M$ and well-founded, and $V$ satisfies enough set theory to perform a Mostowski collapse on~$\MM$, then we can get a model isomorphic to $\MM$ which is transitive and whose elementhood relation coincides with that of $V$. If $\in_M$ is set-like on $M$ but not well-founded, we can still construct an extension $\MMGS$ whose objects are equivalence classes of $\SSS$-names forced to be equal by an element of $G$; for this construction, the atomic case of the truth lemma is immediate.} We assume that $\SSS=\<\PP,\G,\F>$ denotes a class symmetric system in $\MM$; the interesting case is when \emph{$\SSS$-generic filters} exist in $V$.

\begin{definition}
\begin{itemize}
\item A filter $G\subseteq P$ is \emph{$\SSS$-generic} in case it intersects every symmetrically dense subclass of $\PP$ in $\MM$.
\item Given a $\PP$-name $\dot x$, we may recursively evaluate it by an $\SSS$-generic filter as usual, letting $\dot x^G=\{\dot y^G\mid(\exists p\in G)(\<\dot y,p>\in\dot x)\}$.
\item An $\SSS$-generic extension by an $\SSS$-generic filter $G$ is of the form $\MMGS=\<M[G]_\SSS,\in,\C[G]_\SSS>$, where
\[
M[G]_\SSS=\{\dot x^G\mid\dot x\in\HS\}\textrm{, and}
\]
\[
\C[G]_\SSS=\{\dot X^G\mid\dot X\in\C\textrm{ is an $\SSS$-name}\}.
\]
We also refer to such an extension as a symmetric extension (of $\MM)$.
\end{itemize}
\end{definition}

\begin{theorem}\label{thm:truth}
If $\SSS$ admits forcing relations, then for any given first-order formula~$\varphi$ with fixed class $\SSS$-name parameters $\vec\Gamma$, whenever $G\subseteq P$ is $\SSS$-generic and $\vecdot x\in\HS$,
\begin{equation}\tag{$\ast$}\label{ast}
\MMGS\models\varphi(\vec{\dot{x}}^G,\vec{\Gamma}^G)\iffq(\exists p\in G)(p\Sforces\varphi(\vecdot x,\vec\Gamma)).
\end{equation}
\end{theorem}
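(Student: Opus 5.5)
We prove \eqref{ast} by induction on the complexity of $\varphi$, following the usual proof of the truth lemma for set forcing. The one genuinely new ingredient is that we may only use genericity for \emph{symmetrically dense} classes. So every density argument must be run with a class that is dense and also symmetric. The tool for this is \cref{lem:sym}: if a class is defined from $\Sforces$ with $\SSS$-name parameters $\vecdot x,\vec\Gamma$, then it is fixed by every $\pi\in\sym(\vecdot x)\cap\bigcap\sym(\vec\Gamma)$. This intersection lies in $\F$, since $\F$ is a filter with a base (here we use that class $\SSS$-names are symmetric). Such a class exists as a class by first-order comprehension, because $\Sforces$ is a class once the parameters $\vec\Gamma$ are fixed (\cref{thm:fr}). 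Throughout we also use \cref{lem:str}, so that $G$ being a filter lets us pass to common extensions.

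\textbf{Atomic cases.} We first treat $\dot x\in\dot y$ and $\dot x=\dot y$ (with $\subseteq$) for $\dot x,\dot y\in\HS$, by simultaneous induction on name rank, exactly as for set forcing. For the left-to-right direction of $\in$: if $\dot x^G\in\dot y^G$, then $\dot x^G=\dot z^G$ for some $\<\dot z,r>\in\dot y$ with $r\in G$. By the induction hypothesis some $q\in G$ forces $\dot x=\dot z$. A common extension in $G$ then lies in the class of \cref{def:atomic}(\ref{ad:el}), so it forces $\dot x\in\dot y$.

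For the right-to-left direction we use a density argument. The class
\[
D=\{q\mid(\exists\<\dot z,r>\in\dot y)(q\le r\land q\atforces\dot x=\dot z)\}\cup\{q\mid(\forall s\le q)\,s\notin D_0\},
\]
where $D_0$ is the first part, is dense. It is symmetric under $\sym(\dot x)\cap\sym(\dot y)$ by \cref{lem:atsymm}. Hence $G$ meets it. Below a $p\in G$ forcing $\dot x\in\dot y$, only the first alternative is possible, and the induction hypothesis finishes the case.

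The $\subseteq$ case is similar. For left to right, suppose $\dot x^G\subseteq\dot y^G$ but no element of $G$ forces $\dot x\subseteq\dot y$. Consider the symmetric dense class of conditions that either force $\dot x\subseteq\dot y$ or have some $\<\dot z,r>\in\dot x$ with $q\le r$ and $q\atforces\neg(\dot z\in\dot y)$ in the sense of \cref{def:sofr}. Genericity yields such a $q\in G$. Then $\dot z^G\in\dot x^G\subseteq\dot y^G$, so some element of $G$ forces $\dot z\in\dot y$, which is incompatible with $q$, a contradiction. The cases $x\in X$ and $X=Y$ are handled the same way. For $x\in X$ the symmetric dense class uses $\sym(\dot\Gamma)\in\F$. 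The case $X=Y$ is defined via (4) and so reduces to later cases.

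\textbf{Inductive steps.} The $\land$ step uses that $G$ is a filter together with \cref{lem:str}(1).

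For negation, right to left: if $p\in G$ forces $\neg\varphi$ and $\MMGS\models\varphi$, the induction hypothesis gives $q\in G$ forcing $\varphi$, and a common extension contradicts the definition. Left to right: the class $\{q\mid q\Sforces\varphi\lor q\Sforces\neg\varphi\}$ is dense and symmetric by \cref{lem:sym}. So $G$ meets it, and the induction hypothesis excludes the first alternative.

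For $\exists z$, left to right: a witness $\dot z^G$ with $\dot z\in\HS$ is forced by some $q\in G$ by induction, and \cref{def:sofr}(7) together with \cref{lem:str}(2) shows that $q$ forces $\exists z\varphi$. Right to left: the class $\{r\mid(\exists\dot z\in\HS)\,r\Sforces\varphi(\ldots,\dot z)\}\cup\{r\mid r\Sforces\neg\exists z\varphi\}$ is dense and symmetric, because $\pi$ maps $\HS$ to itself and \cref{lem:sym} applies. So $G$ meets it, compatibility with $p$ rules out the second part, and the induction hypothesis gives the witness. Here $\dot z$ ranges over a proper class, but the class is still definable since $\Sforces$ is a class.

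\textbf{Main obstacle.} The hard part is the bookkeeping of symmetry. Each density class must be shown to lie in $\F$ via a single group $\sym(\vecdot x)\cap\bigcap\sym(\vec\Gamma)$, and this needs the symmetry lemma for $\Sforces$ with the \emph{fixed} parameters $\vec\Gamma$. The subtlety is that $\pi(\vec\Gamma)=\vec\Gamma$ must hold for $\pi$ in the relevant group, so that $\pi$ maps the class to itself rather than to a class defined from different parameters. This is why we intersect with $\sym(\vec\Gamma)$, and why the theorem is stated only for first-order $\varphi$, where $\Sforces$ exists as a class.
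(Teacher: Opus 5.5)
Your proposal is correct and follows essentially the same route as the paper. It uses induction on formula complexity, handles the set-atomic cases by induction on name rank, and runs every genericity argument through a dense class that is symmetric under $\sym(\vecdot x)\cap\bigcap\sym(\vec\Gamma)$ by the symmetry lemma, which is exactly the role of the paper's classes $A$--$F$. The differences are cosmetic: the paper proves the Boolean steps first and invokes them in the atomic $\subseteq$ case, where you argue incompatibility directly, and in the $\exists$ step it is \cref{lem:str}(1), not (2), that yields $q\Sforces\exists z\,\varphi$.
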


\begin{proof}
The proof is similar to the analogous result for class forcing, except we must pay attention to the symmetry of the relevant dense classes.
Suppose that $G\subseteq P$ is $\SSS$-generic. We show (\ref{ast}) by induction on the complexity of the formula $\varphi$.

It is useful to establish that (\ref{ast}) plays well with negation and conjunction before we prove the atomic case. So, suppose first that (\ref{ast}) holds for $\varphi$ and let $\vecdot x\in\HS$. If $p\Sforces\neg\varphi(\vecdot x,\vec\Gamma)$ for some $p\in G$, then no element of $G$ forces $\varphi(\vecdot x,\vec\Gamma)$ (by \cref{lem:str}(1)), so our assumption on $\varphi$ tells us that $\neg\varphi(\vecdot x^G,\vec\Gamma^G)$ holds in $\MMGS$. Conversely, if $\neg\varphi(\vecdot x^G,\vec\Gamma^G)$ holds in $\MMGS$, then by our assumption on $\varphi$, there is no $p\in G$ with $p\Sforces\varphi(\vecdot x,\vec\Gamma)$. But the class
\[
A=\{q\in P\mid(q\Sforces\neg\varphi(\vecdot x,\vec\Gamma))\lor(q\Sforces\varphi(\vecdot x,\vec\Gamma))\}
\]
is dense in $\PP$ by \cref{def:sofr}(6), and symmetric by \cref{lem:sym}. We may thus pick $q\in G\cap A$. Since $p,q\in G$ are compatible, by \cref{lem:str}(1) it cannot be the case that $q\Sforces\varphi(\vecdot x,\vec\Gamma)$, so it must be the case that $q\Sforces\neg\varphi(\vecdot x,\vec\Gamma)$.

For conjunction, suppose that (\ref{ast}) holds for the formulas $\varphi,\psi$ with class parameters $\vec\Gamma,\vec\Gamma'$ agreeing on any shared variables, and let $\vecdot x,\vecdot y\in\HS$ be assignments of the free set variables of $\varphi,\psi$ respectively, that also agree on any shared variables. Then $p\Sforces(\varphi\land\psi)(\vecdot x\cup\vecdot y,\vec\Gamma\cup\vec\Gamma')$ for some $p\in G$ if and only if $p\Sforces\varphi(\vecdot x,\vec\Gamma)$ and $p\Sforces\psi(\vecdot y,\vec\Gamma')$ for some $p\in G$ (by \cref{lem:str}(1)), if and only if $\varphi(\vecdot x^G,\vec\Gamma^G)$ and $\psi(\vecdot y^G,\vec\Gamma'^G)$ hold in $\MMGS$, if and only if $(\varphi\land\psi)(\vecdot x^G\cup\vecdot y^G,\vec\Gamma^G\cup\vec\Gamma'^G)$ holds in $\MMGS$.

\medskip

With the Boolean operations under our belt, we now show by induction on name ranks that (\ref{ast}) holds for the atomic formulas $x\in y$, $x\subseteq y$, $x=y$. The base case (all name ranks $0$) is immediate from the definitions.

Suppose first that $p\Sforces\dot x\in\dot y$ for some $p\in G$ and $\dot x,\dot y\in\HS$. The class
\[
B=\{q\in P\mid(\exists\<\dot z,r>\in\dot y)(q\leq r\,\land\,q\Sforces\dot x=\dot z)\lor(q\Sforces\dot x\notin\dot y)\}
\]
is dense in $\PP$: if $s\notSforces\dot x\notin\dot y$ for all $s\leq t$, then $t\Sforces\dot x\in\dot y$ by \cref{def:sofr}(6), and so by \cref{def:atomic}(\ref{ad:el}) there are densely many $q\leq t$ with $\<\dot z,r>\in\dot y$ such that $q\leq r$ and $q\Sforces\dot x=\dot y$. $B$ is symmetric, with $\sym(B)\supseteq\sym(\dot x)\cap\sym(\dot y)\in\F$ by \cref{lem:sym}. Thus, we can take $q\in G\cap B$, for which there must be a $\<\dot z,r>\in\dot y$ with $q\leq r$ and $q\Sforces\dot x=\dot z$ by \cref{lem:str}(1). By our induction hypothesis, since $\rho(\dot z)<\rho(\dot y)$, $\dot x^G=\dot z^G$, and $\dot z^G\in\dot y^G$ by \cref{lem:atom}(1), so $\dot x^G\in\dot y^G$.

Conversely, suppose that $\dot x^G\in\dot y^G$ for some $\dot x,\dot y\in\HS$. Then there is $\<\dot z, r>\in\dot y$ with $\dot x^G=\dot z^G$ and $r\in G$. By \cref{lem:atom}(1), $r\Sforces\dot z\in\dot y$. By our inductive hypothesis, there is $p\in G$ with $p\Sforces\dot x=\dot z$; by \cref{lem:str}(1) we can assume $p\leq r$ since both are in $G$. Therefore, $p\Sforces\dot x\in\dot y$, as desired.

Now, suppose $p\Sforces\dot x\subseteq\dot y$ for some $p\in G$. To show that $\dot x^G\subseteq\dot y^G$, consider $a\in\dot x^G$; there is $\<\dot z,r>\in\dot x$ such that $a=\dot z^G$ and $r\in G$. By \cref{def:sofr}(6), the class
\[
C=\{q\in P\mid(q\Sforces\dot z\in\dot y)\lor(q\Sforces\dot z\not\in\dot y)\}
\]
is dense in $\PP$, and symmetric by \cref{lem:sym}. Thus, we can pick a $q\in G\cap C$, for which it must be that $q\Sforces\dot z\in\dot y$ by \cref{def:atomic}(\ref{ad:su}). Then $\dot z^G\in\dot y^G$ by our inductive hypothesis.

Conversely, suppose that $\dot x^G\subseteq\dot y^G$ for some $\dot x,\dot y\in\HS$; we want to find $p\in G$ with $p\Sforces\dot x\subseteq\dot y$. Consider the class
\[
D=\{p\in P\mid(p\Sforces\dot x\subseteq\dot y)\lor(\exists\<\dot z,r>\in\dot x)(p\leq r\,\land\,p\Sforces\dot z\notin\dot y)\}.
\]
We argue that $D$ is dense in $\PP$: if $s\notSforces\dot x\subseteq\dot y$ for all $s\leq t$, then by Definitions \ref{def:atomic}(\ref{ad:su})\&\ref{def:sofr}(6) there are densely many $q\leq t$ with a $\<\dot z,r>\in\dot x$ such that $q\leq r$ and $q\Sforces\dot z\notin\dot y$. Since $D$ is symmetric, we can take $p\in G\cap D$. Suppose, for contradiction, that there is $\<\dot z,r>\in\dot x$ with $p\leq r$ and $p\Sforces\dot z\notin\dot y$, so $r\in G$ and $\dot z^G\in\dot x^G$. Then $\dot z^G\in\dot x^G\setminus\dot y^G$ by our inductive hypothesis and our argument that (\ref{ast}) plays well with negation. But then $\dot x^G\not\subseteq\dot y^G$, a contradiction. So there is $p\in G$ with $p\Sforces\dot x\subseteq\dot y$.

Finally, the fact that (\ref{ast}) holds for set equality is due to the transitivity of $M$, from which we get Extensionality for sets in $\MMGS$.

\medskip

Moving on to class elementhood, suppose that $p\Sforces\dot x\in\dot\Gamma$ for some $\dot x\in\HS$, $\SSS$-name $\dot\Gamma$, and $p\in G$. By \cref{def:sofr}(3), the class
\[
E=\{q\in P\mid(\exists\<\dot z,r>\in\dot\Gamma\ q\leq r\,\land\,q\Sforces\dot x=\dot z)\lor(q\forces\dot x\not\in\dot\Gamma)\}
\]
is dense in $\PP$. Since $E$ is also symmetric, we can take $q\in G\cap E$, for which there must be $\<\dot z,r>\in\dot\Gamma$ with $q\leq r$ and $q\Sforces\dot x=\dot z$. By (\ref{ast}) for set equality, we then get $\dot x^G=\dot z^G\in\dot\Gamma^G$. Conversely, if $\dot x^G\in\dot\Gamma^G$, then there is some $\<\dot z,r>\in\dot\Gamma$ with $\dot x^G=\dot z^G$ and $r\in G$. By \cref{lem:atom}(1), $r\Sforces\dot z\in\dot\Gamma$. Now, (\ref{ast}) for set equality yields $p\in G$ with $p\Sforces\dot x=\dot z$, and we may also assume $p\le r$. It follows that $p\Sforces\dot x\in\dot\Gamma$.

For the quantification step of the induction, suppose that (\ref{ast}) holds for the formula $\varphi$ (with class parameters $\vec\Gamma$, and $z$ appearing free), and let $\vecdot x\in\HS$. For the forward direction: if $\MMGS$ satisfies $(\exists z\,\varphi)(\vecdot x^G,\vec\Gamma^G)$, then there is some $\dot z\in\HS$ for which $\varphi(\vecdot x^G\concatt\dot z^G,\vec\Gamma^G)$ holds in $\MMGS$, and for this $\dot z$ there is $p\in G$ with $p\Sforces\varphi(\vecdot x\concatt\dot z,\vec\Gamma)$ by the inductive hypothesis. But then $p\Sforces(\exists z\,\varphi)(\vecdot x,\vec\Gamma)$ by \cref{def:sofr}(7) and \cref{lem:str}(1). Conversely, suppose that $p\Sforces(\exists z\,\varphi)(\vecdot x,\vec\Gamma)$ for some $p\in G$, and consider the class
\[
F=\{q\in P\mid(\exists\dot z\in\HS)(q\Sforces\varphi(\vecdot x\concatt\dot z,\vec\Gamma))\lor(q\Sforces\neg(\exists z\,\varphi(\vecdot x,\vec\Gamma)))\},
\]
which is symmetrically dense in $\PP$ by \cref{def:sofr}(6)\&(7), so we can take $q\in G\cap F$. By our assumption on $p\in G$ and \cref{lem:str}(1), there must be some $\dot z\in\HS$ such that $q\Sforces\varphi(\vecdot x\concatt\dot z,\vec\Gamma)$. Then our inductive assumption tells us that $\MMGS$ satisfies $\varphi(\vecdot x^G\concatt\dot z^G,\vec\Gamma^G)$, so $(\exists z\,\varphi)(\vecdot x^G,\vec\Gamma^G)$ holds in $\MMGS$.

Finally, (\ref{ast}) holds for formulas of the form $X=Y$ because this is defined from formulas of the form $x\in X$ (which satisfy (\ref{ast})) using Boolean operations and quantifiers (which play well with (\ref{ast})), and because $\MMGS$ automatically satisfies Extensionality for classes, by the transitivity of $M$.
\end{proof}

\begin{corollary}\label{cor:truth}
Suppose that $\SSS$ admits forcing relations, and that for any $p\in P$, there is an $\SSS$-generic $G\subseteq P$ such that $p\in G$. Then for any given first-order formula~$\varphi$ with fixed class $\SSS$-name parameters $\vec\Gamma$, and any $p\in P$ and $\vecdot x\in\HS$,
\[
p\Sforces\varphi(\vecdot x,\vec\Gamma)\iffq(\forall\text{ $\SSS$-generic }G\subseteq P)(p\in G\to\MMGS\models\varphi(\vecdot x^G,\vec\Gamma^G)).
\]
\end{corollary}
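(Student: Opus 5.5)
The forward direction follows directly from \cref{thm:truth}. Suppose $p\Sforces\varphi(\vecdot x,\vec\Gamma)$ and $G$ is any $\SSS$-generic filter with $p\in G$. Then the right-hand side of (\ref{ast}) holds, witnessed by $p$ itself. Hence $\MMGS\models\varphi(\vecdot x^G,\vec\Gamma^G)$. This direction uses neither the existence assumption on generics nor anything beyond the truth lemma.

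For the reverse direction I argue by contraposition. Suppose $\neg(p\Sforces\varphi(\vecdot x,\vec\Gamma))$. By \cref{lem:str}(2), applied to $\Sforces$ for the first-order formula $\varphi$ with fixed parameters $\vec\Gamma$, there is some $q\le p$ such that no $r\le q$ forces $\varphi(\vecdot x,\vec\Gamma)$. By \cref{def:sofr}(6), this means exactly $q\Sforces\neg\varphi(\vecdot x,\vec\Gamma)$. Here I use that \cref{thm:fr} provides a single forcing relation for the list $\varphi,\neg\varphi$ with the same parameters, so that $\Sforces$ applies to both formulas simultaneously.

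Now use the hypothesis of the corollary to choose an $\SSS$-generic $G$ with $q\in G$. Since $G$ is a filter and $q\le p$, upward closure gives $p\in G$. By \cref{thm:truth} applied to $\neg\varphi$, we get $\MMGS\models\neg\varphi(\vecdot x^G,\vec\Gamma^G)$. So $G$ is an $\SSS$-generic filter containing $p$ in whose extension $\varphi$ fails, which contradicts the right-hand side.

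There is no real obstacle here. The only points needing care are that \cref{thm:truth} is invoked for the negated formula, and that the step from the failure of $p\Sforces\varphi$ to a $q\le p$ forcing $\neg\varphi$ correctly uses \cref{lem:str}(2) in its contrapositive form.
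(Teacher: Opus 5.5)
Your proof is correct and follows essentially the same route as the paper. The forward direction is immediate from \cref{thm:truth}. The reverse direction goes by contraposition: obtain $q\le p$ with $q\Sforces\neg\varphi(\vecdot x,\vec\Gamma)$, take an $\SSS$-generic $G$ containing $q$ (hence $p\in G$), and apply \cref{thm:truth} to $\neg\varphi$. You only spell out the appeal to \cref{lem:str}(2) and \cref{def:sofr}(6) for obtaining $q$, which the paper leaves implicit.
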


\begin{proof}
The proof works the same as for usual forcing. The forward direction follows directly from \cref{thm:truth}. We show the contrapositive of the reverse direction: suppose that $p\notSforces\varphi(\vecdot x,\vec\Gamma)$. Then there is some $q\leq p$ with $q\Sforces\neg\varphi(\vecdot x,\vec\Gamma)$. Let $G\subseteq P$ be an $\SSS$-generic with $q\in G$ (so $p\in G$ also). Then by \cref{thm:truth}, $\MMGS\models\neg\varphi(\vecdot x^G,\vec\Gamma^G)$, so we are done.
\end{proof}

\begin{corollary}\label{cor:sotruth}
Assume second-order class comprehension. Then, for any given second-order formula $\varphi$, whenever $G\subseteq P$ is $\SSS$-generic and $\vecdot x,\vecdot X$ are $\SSS$-names,
\[
\MMGS\models\varphi(\vecdot x^G,\vecdot X^G)\iffq(\exists p\in G)(p\sforces\varphi(\vecdot x,\vecdot X).
\]
Furthermore, if for any $p\in P$, there is an $\SSS$-generic $G\subseteq P$ such that $p\in G$, then
\[
p\sforces\varphi(\vecdot x,\vecdot X)\iffq(\forall\text{ $\SSS$-generic }G\subseteq P)(p\in G\to\MMGS\models\varphi(\vecdot x^G,\vecdot X^G)).
\]
\end{corollary}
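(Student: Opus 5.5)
The plan is to rerun the proof of \cref{thm:truth} inside the second-order setting, and then obtain the second display exactly as in \cref{cor:truth}. First, fix the second-order formula $\varphi$. By \cref{thm:sofr}, applied to the finite list of subformulas of $\varphi$, we get forcing relations for all of them. These relations exist as classes once the class $\SSS$-name parameters are fixed, and they agree with $\sforces$ on those parameters. Hence, for any fixed assignment $\vecdot X$, the properties used earlier are available for $\sforces$: the symmetry lemma (\cref{lem:sym}), extension and density (\cref{lem:str}), and closure under provability (\cref{lem:prov2}). All of these were stated for $\sforces$ with arbitrary $\SSS$-names, so they apply directly.

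Next, we prove the first equivalence by meta-theoretic induction on $\varphi$, uniformly over all assignments $\vecdot x,\vecdot X$. Every case except the second-order existential quantifier is handled verbatim as in \cref{thm:truth}. Whenever that proof intersects $G$ with a dense class such as $A,B,C,D,E,F$, the class is formed by second-order comprehension, or by first-order comprehension from the class relations of \cref{thm:sofr}. Such a class is symmetric with $\sym\supseteq\bigcap\sym(\dot\Gamma_i)\cap\bigcap\sym(\dot x_j)\in\F$, by \cref{lem:sym}. This uses that each class $\SSS$-name $\dot X$ has $\sym(\dot X)\in\F$ and that $\F$ is closed under finite intersections. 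The case $X=Y$ reduces, as before, to the case $x\in X$ together with quantifiers.

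The new step, and the only one requiring care, is $\exists Z\,\varphi$. For the forward direction, suppose $\MMGS\models\exists Z\,\varphi$. Then some $\dot Z\in\C$ that is an $\SSS$-name witnesses it. The induction hypothesis gives $p\in G$ with $p\sforces\varphi(\ldots,\dot Z)$, and by \cref{def:sofr}(8) and \cref{lem:str}(1) this $p$ forces $\exists Z\,\varphi$. For the converse, suppose $p\in G$ forces $\exists Z\,\varphi$. Let
\[
F'=\{q\in P\mid(\exists Z)(Z\text{ is an $\SSS$-name}\land q\sforces\varphi(\vecdot x,\vecdot X\concatt Z))\lor q\sforces\neg\exists Z\,\varphi(\vecdot x,\vecdot X)\}.
\]
This $F'$ exists as a class by second-order comprehension, since it is defined by a second-order formula with class parameters. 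It is dense by \cref{def:sofr}(6),(8). It is symmetric because applying $\pi$ sends an $\SSS$-name witness $Z$ to the $\SSS$-name $\pi(Z)$, using \cref{lem:sym}. Picking $q\in G\cap F'$ gives a witness $Z$, since the second disjunct is incompatible with $p\in G$. The induction hypothesis then yields $\MMGS\models\varphi(\ldots,Z^G)$ with $Z^G\in\C[G]_\SSS$.

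I expect the main obstacle to be this quantifier step, specifically the symmetry of $F'$. One has to check that $\pi(Z)$ is again a class $\SSS$-name in $\C$, and that $\sym(\pi(Z))=\pi\,\sym(Z)\pi^{-1}$ still belongs to $\F$, which follows from the normality clause of \cref{def:system}. Once the first equivalence is established, the second follows word for word as in \cref{cor:truth}. The forward direction is immediate. For the reverse direction, if $p$ does not force $\varphi$, pick $q\le p$ with $q\sforces\neg\varphi$, then a generic $G\ni q$, and apply the first part.
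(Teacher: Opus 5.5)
Your proposal is correct and follows the same route as the paper. Like the paper, you rerun the proof of \cref{thm:truth} with $\sforces$ in place of $\Sforces$, form the dense symmetric classes that $G$ must meet by second-order class comprehension, handle the class quantifier just like the set quantifier, and then get the second display as in \cref{cor:truth}; your care in running the induction uniformly over all class $\SSS$-name assignments and in checking that $\pi(Z)$ is again a class $\SSS$-name (via normality of $\F$) only spells out details the paper leaves implicit.
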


\begin{proof}
Most of the proof is exactly the same as those of \cref{thm:truth} and \cref{cor:truth}, except using $\sforces$ instead of $\Sforces$, noting that second-order class comprehension allows us to form the classes of conditions required for $G$ to intersect. The class quantification step of the induction works in the same way as the set quantification step of the former proof.
\end{proof}

If we did not have second-order class comprehension, but we still wanted a version of the truth lemma for second-order formulas, we would need a stronger notion of genericity which ensures that the filter $G$ meets all symmetrically dense conditions that are \emph{second-order definable} over $\MM$ -- for example, a similar approach is taken in \cite{chuaqui} for the case of usual class forcing.

\section{Preserving \texorpdfstring{$\GB^-$}{GB⁻}}\label{section:axioms}

We go back to working in $\GB$; we again assume that $\SSS=\<\PP,\G,\F>$ is a class symmetric extension. We show that all axioms of $\GB^{-}$ except Separation and Collection are in fact automatically preserved by $\SSS$ as long as $\SSS$ admits forcing relations (\cref{lem:pres}). We then introduce the notions of stratification (\cref{def:strat}), and show that combinatorial pretameness and stratification of $\SSS$ is enough to preserve every axiom of $\GB^{-}$ (\cref{thm:axioms}). However, we note that neither condition is necessary for this preservation.

\begin{lemma}\label{lem:pres}
If $\SSS$ admits forcing relations, then the following hold.
\begin{enumerate}
\item If $\varphi$ is Extensionality, Regularity, Pairing, Union, or Infinity, then $\1\Sforces\varphi$.
\item For any class $\SSS$-names $\dot\Gamma$ and $\dot\Pi$, $\1\Sforces((\forall x)(x\in\dot\Gamma\ifff x\in\dot\Pi)\to\dot\Gamma=\dot\Pi)$.
\item For any $\dot x\in\HS$, there is a class $\SSS$-name $\dot\Gamma$ with $\1\Sforces(\forall y)(y\in\dot x\ifff y\in\dot\Gamma)$.
\item If $\forall X_1\ldots\forall X_n\exists Y\varphi(X_1,\ldots,X_n,Y)$ is a Class Existence axiom, where $\varphi$ is first-order\footnote{All Class Existence axioms can be written in this form.}, then for all class $\SSS$-names $\dot\Gamma_1,\ldots,\dot\Gamma_n$, there is a class $\SSS$-name $\dot\Pi$ such that $\1\Sforces(\forall x)(x\in\dot\Pi\ifff\varphi(x,\dot\Gamma_1,\ldots,\dot\Gamma_n))$.
\end{enumerate}
\end{lemma}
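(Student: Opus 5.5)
The plan is to verify each item by exhibiting explicit hereditarily symmetric names and then applying the truth-lemma-free machinery already established: \cref{def:sofr}, \cref{lem:str}, \cref{lem:atom}, \cref{lem:sym} and closure under provability (\cref{lem:prov}). Since everything is stated at the level of $\Sforces$, we never need generic filters.

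For (1), the arguments follow the set-forcing templates.

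\emph{Extensionality.} By \cref{def:atomic}(\ref{ad:eq}) together with the characterization of $\forall$ and $\to$ from the proof of \cref{lem:prov2}, it suffices to show the following: if $p\Sforces(\forall z)(z\in\dot x\to z\in\dot y)$, then $p\atforces\dot x\subseteq\dot y$. Given $\<\dot z,r>\in\dot x$, we have $r\atforces\dot z\in\dot x$ by \cref{lem:atom}(1). Hence every $q\le p,r$ forces $\dot z\in\dot y$, which is exactly \cref{def:atomic}(\ref{ad:su}).

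\emph{Regularity.} This is proved as usual by an induction on name rank, using an $\in$-minimal name.

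\emph{Pairing.} For $\dot x,\dot y\in\HS$, use $\{\dot x,\dot y\}^\bullet$. It is symmetric because $\sym(\dot x)\cap\sym(\dot y)\in\F$ fixes it.

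\emph{Union.} Use $\{\<\dot w,q>\mid(\exists\<\dot z,r>\in\dot x)(\<\dot w,s>\in\dot z\land q\le r,s)\}$. Its symmetry group contains $\sym(\dot x)$. Its domain is a subset of $\bigcup\dom\dom(\dot x)\subseteq\HS$, so it is hereditarily symmetric. The usual density argument, via \cref{def:atomic}(\ref{ad:el}), shows that it contains $\bigcup\dot x$.

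\emph{Infinity.} Use $\check\omega$, with $\check a=\{\check b\mid b\in a\}^\bullet$. Every $\pi$ fixes $\1$, so check names are fixed by all of $\G$.

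Item (2) follows directly from \cref{def:sofr}(4), which defines $p\Sforces\dot\Gamma=\dot\Pi$ as forcing the extensional formula. For (3), take $\dot\Gamma=\dot x$ itself, viewed as a class (it is a set, hence a class by $\GB$). It is a class $\SSS$-name, and \cref{def:sofr}(3) for $y\in\dot\Gamma$ literally coincides with \cref{def:atomic}(\ref{ad:el}) for $y\in\dot x$. Applying \cref{lem:str}, this gives the equivalence.

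Item (4) is the main work. Rather than treating the seven Class Existence axioms separately, I would prove the comprehension form uniformly. Given a first-order $\varphi(x,\vec X)$ and class $\SSS$-names $\vec\Gamma$, define
\[\dot\Pi=\{\<\dot x,p>\mid\dot x\in\HS\land p\Sforces\varphi(\dot x,\vec\Gamma)\}.\]
This is a class by first-order comprehension, because the forcing relation for $\varphi$ with the fixed parameters $\vec\Gamma$ exists by \cref{thm:fr}. Its domain is contained in $\HS$.

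The step needing care is that $\dot\Pi$ is symmetric. Pick $H\in\F$ fixing each $\Gamma_i$, and use the normal-filter axioms to obtain a base subgroup $H_c$ contained in the intersection. By \cref{lem:sym}, for $\pi\in H_c$ we get $p\Sforces\varphi(\dot x,\vec\Gamma)\iff\pi(p)\Sforces\varphi(\pi(\dot x),\vec\Gamma)$, so $\pi[\dot\Pi]=\dot\Pi$. Since a class is used here, I would note that the symmetry group is expressed in the parametrized form $\{\pi_b\mid b\in C\}$ for the class $C$ obtained from $C_\G$ and $C_\F$.

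For the forced equivalence, there are two directions. If $p\Sforces\varphi(\dot x,\vec\Gamma)$, then $\<\dot x,p>\in\dot\Pi$, so $p\Sforces\dot x\in\dot\Pi$ by \cref{def:sofr}(3) together with \cref{lem:atom}(2). Conversely, suppose $p\Sforces\dot x\in\dot\Pi$. Then densely many $r\le p$ have some $\<\dot z,s>\in\dot\Pi$ with $r\le s$ and $r\atforces\dot x=\dot z$. Such an $r$ forces $\varphi(\dot z,\vec\Gamma)$ by \cref{lem:str}(1), hence forces $\varphi(\dot x,\vec\Gamma)$ by \cref{lem:prov} applied to the equality axiom (7). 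Finally, \cref{lem:str}(2) gives $p\Sforces\varphi(\dot x,\vec\Gamma)$.
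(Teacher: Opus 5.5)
Everything except Union matches the paper's proof. In particular, for (4) the paper uses exactly your name $\dot\Pi$ together with \cref{lem:sym}, and for (3) it also just takes the class with the same elements as $\dot x$.

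The gap is your Union witness $\{\langle\dot w,q\rangle\mid(\exists\langle\dot z,r\rangle\in\dot x)(\langle\dot w,s\rangle\in\dot z\land q\le r,s)\}$. Here $q$ ranges over \emph{all} common extensions of $r$ and $s$ in $P$. For a proper class forcing this is typically a proper class; for instance, in the paper's forcing adding a cofinal function from $\omega$ to $\Ord$, every condition has class many extensions. So what you define is a class name, not an element of $\HS$, and it cannot witness a set existence axiom. The fact that its domain is a set does not help. You also cannot simply cut $q$ down to a set of conditions: without meets or any pretameness assumption, there need not be a set of common extensions of $r$ and $s$ that is predense below both. This is exactly where the set-forcing template breaks for class forcing.

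The paper avoids this by verifying only the weak form of Union. Let $u=\bigcup\{\dom(\dot y)\mid\dot y\in\dom(\dot x)\}$. The name $u^\bullet$ uses only the condition $\1$, is fixed by every $\pi\in\sym(\dot x)$, and has domain contained in $\HS$. The statement $\1\Sforces(\forall y\in\dot x)(y\subseteq u^\bullet)$ is then routine from \cref{lem:atom}(1) and closure under provability. The strong form of Union is obtained afterwards from Separation, which is only available once $\SSS$ is known to be pretame (e.g.\ via \cref{thm:axioms}). With this replacement your argument is complete.
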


\begin{proof}
For Extensionality, given $\dot x,\dot y\in \HS$, it is straightforward to show that $\1\Sforces((\forall z)(z\in\dot x\ifff z\in\dot y)\to\dot x=\dot y)$ from Definitions \ref{def:atomic}\&\ref{def:sofr} and the results in \cref{section:fr}. Now suppose, for contradiction, that Regularity is not forced by $\1$, so there is $\dot x\in\HS$ and $p\in G$ with
\[
p\Sforces(\dot x\neq\emptyset\;\land\;(\forall y\in\dot x)(y\cap\dot x\neq\emptyset)).
\]
By Regularity, there is $\dot y\in\HS$ of least name rank that has some $q\leq p$ with $q\Sforces\dot y\in\dot x$. By assumption on $p$, the class
\[
D_{\dot y}=\{s\in P\mid(\exists\<\dot z,r>\in\dot y)(s\leq r\;\land\;s\Sforces\dot z\in\dot x)\}
\]
is dense below $q$. So there exists $s\leq q$ and $\<\dot z,r>\in\dot y$ with $s\leq r$ and $s\Sforces\dot z\in\dot x$. But this $\dot z$ has lower name rank than $\dot y$, a contradiction. For Pairing, given $\dot x,\dot y\in\HS$, the $\PP$-name $\{\dot x,\dot y\}^\bullet$ is hereditarily symmetric (since $\sym(\dot x)\cap\sym(\dot y)\in\F)$ and $\1\Sforces(\forall z)(z\in\{\dot x,\dot y\}^\bullet\;\ifff\;z=\dot x\lor z=\dot y)$. To show (the weak form of) Union, suppose $\dot x\in\HS$ and let $u=\bigcup\{\dom(\dot y)\mid\dot y\in\dom(\dot x)\}$. Then $\pi(u^\bullet)=u^\bullet$ for any $\pi\in\sym(\dot x)$, and $\1\Sforces(\forall y\in u^\bullet)(y\subseteq u^\bullet)$. Checking Infinity is easy, since $\check\omega$ is hereditarily symmetric and $\1\Sforces(\check\omega\text{ is an inductive set})$.

(2) is immediate from \cref{def:sofr}(4), and for (3), any $\dot x\in\HS$ has a class $\SSS$-name $\dot\Gamma$ for which $(\forall y)(y\in\dot x\ifff y\in\dot\Gamma)$, and then it is not hard to see that $\1\Sforces(\forall y)(y\in\dot x\ifff y\in\dot\Gamma)$.

We show (4) for first-order class comprehension instead of the Class Existence axioms, since the former clearly implies the latter. Given a first-order formula $\varphi$ with $\SSS$-name parameters $\vec\Gamma$, the class
\[
\dot\Pi=\{\<\dot x,p>\mid\dot x\in\HS\,\land\,p\in P\,\land\,p\Sforces\varphi(\dot x,\vec\Gamma)\}
\]
is an $\SSS$-name (by symmetry of $\vec\Gamma$ and \cref{lem:sym}), and $\1\Sforces(\forall x)(x\in\dot\Pi\;\ifff\;\varphi(x,\vec\Gamma))$.
\end{proof}

Note that (2) is equivalent to $\1\sforces\text{``Extensionality for classes''}$, while (3) implies $\1\sforces$``every set represents a class'' and (4) implies $\1\sforces\varphi$ for any Class Existence axiom $\varphi$. We get a similar result for second-order comoprehension:

\begin{lemma}\label{lem:sopres}
Assume second-order class comprehension. Then, given a second-order formula $\varphi$ with class $\SSS$-name parameters $\vec\Gamma$, the class
\[
\dot\Pi=\{\<\dot x,p>\mid\dot x\in\HS\,\land\,p\in P\,\land\,p\sforces\varphi(\dot x,\vec\Gamma)\}
\]
is an $\SSS$-name with $\1\sforces(\forall x)(x\in\dot\Pi\;\ifff\;\varphi(x,\vec\Gamma))$.\hfill{$\qed$}
\end{lemma}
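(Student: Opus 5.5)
The plan is to repeat the argument for \cref{lem:pres}(4), using \cref{thm:sofr} and \cref{lem:sym} in place of \cref{thm:fr}. Fix the second-order formula $\varphi$ and the class $\SSS$-name parameters $\vec\Gamma$. By \cref{thm:sofr} (second-order comprehension applied to the finite list consisting of $\varphi$ and its subformulas, with the parameters fixed), there is a class forcing relation for $\varphi$. Hence $\dot\Pi$ exists as a class: it is obtained by one more instance of comprehension applied to the formula $\dot x\in\HS\land p\in P\land p\sforces\varphi(\dot x,\vec\Gamma)$.

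First I show that $\dot\Pi$ is an $\SSS$-name. Its domain consists of elements of $\HS$ paired with conditions, so it is a $\PP$-name all of whose domain elements are hereditarily symmetric. For symmetry, pick $H\in\F$ with $H\subseteq\bigcap_i\sym(\dot\Gamma_i)$. This is possible because finitely many parameters are involved, each symmetric, and the base of $\F$ is closed under finite intersections. Now let $\pi\in H$. By \cref{lem:sym},
\[p\sforces\varphi(\dot x,\vec\Gamma)\iffq\pi(p)\sforces\varphi(\pi(\dot x),\vec\Gamma).\]
Since $\pi$ maps $\HS$ onto $\HS$ and $P$ onto $P$, this gives $\pi(\dot\Pi)=\dot\Pi$. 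Thus $\sym(\dot\Pi)\supseteq H$, so $\sym(\dot\Pi)\in\F$.

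Next I show that $\1\sforces(\forall x)(x\in\dot\Pi\ifff\varphi(x,\vec\Gamma))$. By the characterisations in the proof of \cref{lem:prov2}, it suffices to fix $\dot x\in\HS$ and verify two things: first, that every $q$ forcing $\dot x\in\dot\Pi$ also forces $\varphi(\dot x,\vec\Gamma)$; second, the converse.

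For the first direction, suppose $q\sforces\dot x\in\dot\Pi$. By \cref{def:sofr}(3), densely many $r\le q$ have some $\<\dot z,s>\in\dot\Pi$ with $r\le s$ and $r\atforces\dot x=\dot z$. For such $r$ we have $s\sforces\varphi(\dot z,\vec\Gamma)$, hence $r\sforces\varphi(\dot z,\vec\Gamma)$ by \cref{lem:str}(1). Closure under provability (\cref{lem:prov2}, the equality axiom scheme (7)) then transfers this to $r\sforces\varphi(\dot x,\vec\Gamma)$. Finally, \cref{lem:str}(2) yields $q\sforces\varphi(\dot x,\vec\Gamma)$.

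For the converse, suppose $q\sforces\varphi(\dot x,\vec\Gamma)$. Then $\<\dot x,q>\in\dot\Pi$, and \cref{lem:atom}(1) together with \cref{def:sofr}(3) gives $q\sforces\dot x\in\dot\Pi$. Indeed, for any $r\le q$ we may take $\dot z=\dot x$ and $s=q$, using $r\atforces\dot x=\dot x$ from \cref{lem:atom}(2).

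The only delicate point is bookkeeping. Since $\varphi$ is second order, $\sforces\varphi$ is a priori only a formula, not a class. It has to be confirmed that finitely many comprehension instances make the relation, and hence $\dot\Pi$, a genuine class. Once that is in place, applying \cref{lem:sym} and \cref{lem:prov2} to this class relation is routine.
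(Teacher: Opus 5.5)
Your proposal is correct and takes the route the paper intends. The lemma is stated without proof as the second-order analogue of \cref{lem:pres}(4): second-order comprehension supplies the class $\dot\Pi$, symmetry of $\vec\Gamma$ together with \cref{lem:sym} makes it an $\SSS$-name, and the forced biconditional is then checked directly, which you do in detail with \cref{def:sofr}(3), \cref{lem:str}, and the equality scheme of \cref{lem:prov2}.
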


The above conclusion implies $\1\sforces\psi$ for the instance $\psi$ of the second-order class comprehension scheme involving $\varphi$.

\medskip

Having shown that, given $\SSS$ admits forcing relations, $\SSS$ preserves all axioms of $\GB^-$ except for possibly Separation and Collection, we define pretameness of $\SSS$ by the preservation of these two axioms.

\begin{definition}\label{def:pretame}
A class symmetric system $\SSS$ is \emph{pretame} if it admits forcing relations and preserves Separation and Collection, i.e., for any $\SSS$-name~$\dot\Gamma$,
\begin{itemize}[leftmargin=*]
\item $\1\Sforces(\forall x\exists y)(y=x\cap\dot\Gamma)$, and
\item $\1\Sforces(\forall x)\big[(\dot\Gamma\text{ is a relation}\land(\forall u\in x)(\exists v)(\<u,v>\in\dot\Gamma))\to$\\
\strut\hfill$(\exists y)(\forall u\in x)(\exists v\in y)(\<u,v>\in\dot\Gamma)\big]$.
\end{itemize}
\end{definition}

Observe that, by \cref{lem:pres}, $\SSS$ is pretame if and only if $\SSS$ admits forcing relations and $\1\sforces\varphi$ whenever $\varphi$ is an axiom of $\GB^-$.

\medskip

These results translate smoothly into the context of \cref{section:truth}, where $\MM$ is a transitive model of $\GB$ in an ambient universe $V$ of set theory:

\begin{lemma}
\begin{enumerate}
\item If $\SSS$ admits forcing relations, then $\MMGS\models\varphi$ for any axiom $\varphi$ of $\GB^-$ besides Separation or Collection, and any $\SSS$-generic $G\subseteq P$.
\item If $\SSS$ is pretame, then $\MMGS\models\varphi$ for any axiom $\varphi$ of $\GB^-$ and any $\SSS$-generic $G\subseteq P$.
\item Suppose that $\SSS$ admits forcing relations and for any $p\in P$, there is an $\SSS$-generic containing $p$. Then the implication in (2) can be reversed.
\end{enumerate}
\end{lemma}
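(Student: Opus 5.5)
The plan is to transfer each statement about forcing to a statement about $\MMGS$ using \cref{thm:truth} for (1) and (2), and \cref{cor:truth} for (3). Throughout, each axiom is treated as a sentence in the language of $\GB$ whose second-order quantifiers are handled semantically in $\MMGS$, with class variables ranging over $\C[G]_\SSS$.

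For (1), I go through the clauses of \cref{lem:pres}.

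\begin{itemize}
\item The set axioms Extensionality, Regularity, Pairing, Union and Infinity are first-order sentences forced by $\1$. Since $\1\in G$, \cref{thm:truth} gives that they hold in $\MMGS$.
\item For Extensionality for classes, take $\dot\Gamma^G,\dot\Pi^G\in\C[G]_\SSS$. Clause (2) of \cref{lem:pres} yields the instance, again via \cref{thm:truth} with these as fixed class parameters. One can also simply note that $M[G]_\SSS$ is transitive.
\item For ``every set represents a class'', let $a=\dot x^G$. Clause (3) of \cref{lem:pres} gives $\dot\Gamma$ with $\1\Sforces(\forall y)(y\in\dot x\ifff y\in\dot\Gamma)$, so \cref{thm:truth} gives $\dot\Gamma^G\in\C[G]_\SSS$ with the same elements as $a$.
\item For the Class Existence axioms, given $\dot\Gamma_1^G,\ldots,\dot\Gamma_n^G$, clause (4) of \cref{lem:pres} supplies $\dot\Pi\in\C$, and \cref{thm:truth} shows that $\dot\Pi^G$ witnesses the axiom in $\MMGS$.
\end{itemize}

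The only subtlety is that \cref{thm:truth} is applied formula by formula with fixed class parameters. That is harmless here, since each axiom is verified for arbitrary given class parameters from $\C[G]_\SSS$, and each of these has an $\SSS$-name in $\C$.

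For (2), Separation and Collection remain. Given $\dot\Gamma^G$, \cref{def:pretame} says $\1$ forces the universally quantified first-order statement with parameter $\dot\Gamma$. Since $\1\in G$, \cref{thm:truth} gives that it holds in $\MMGS$. Combined with (1), this yields every axiom of $\GB^-$.

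For (3), assume every condition lies in some $\SSS$-generic filter and that every $\MMGS$ satisfies Separation and Collection. Fix a class $\SSS$-name $\dot\Gamma$ and let $\varphi(\dot\Gamma)$ be the relevant instance. For every $\SSS$-generic $G$ containing $\1$, which is every $G$, we have $\MMGS\models\varphi(\dot\Gamma^G)$. The reverse direction of \cref{cor:truth} then gives $\1\Sforces\varphi(\dot\Gamma)$, so $\SSS$ is pretame.

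The main point needing care is in (3). The hypothesis should be read as quantifying over all $\SSS$-generic filters. The corollary needs genericity through arbitrary $p$, which is why the assumption about generics through every condition is required. Beyond that, the work is bookkeeping about fixed parameters.
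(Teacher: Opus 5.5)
Your proposal is correct and follows the paper's own proof. Parts (1) and (2) come from \cref{lem:pres} and \cref{def:pretame}, transferred to $\MMGS$ via \cref{thm:truth} using $\1\in G$. Part (3) comes from the reverse direction of \cref{cor:truth}, applied to each Separation and Collection instance with class parameter $\dot\Gamma$; your clause-by-clause treatment of (1) just spells out what the paper calls a direct consequence.
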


\begin{proof}
(1) and (2) are direct consequences of \cref{lem:pres}, \cref{def:pretame}, and \cref{thm:truth}.

To see (3), suppose that $\MMGS\models\varphi$ for any $\SSS$-generic $G\subseteq P$ and axiom $\varphi$ of $\GB^-$, and that for any $p\in P$, there is an $\SSS$-generic containing $p$. Then, by \cref{cor:truth}, for any $\SSS$-name $\dot\Gamma$, both bullet points of \cref{def:pretame} hold.
\end{proof}

From \cref{lem:sopres} and \cref{cor:sotruth}, we also get the following:

\begin{corollary}
Suppose $\MM$ satisfies second-order class comprehension. Then for any $\SSS$-generic $G\subseteq P$, the model $\MMGS$ also satisfies second-order class comprehension.\hfill{$\qed$}
\end{corollary}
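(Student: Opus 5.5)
The plan is to derive the statement directly from \cref{lem:sopres} and \cref{cor:sotruth}, checking the scheme instance by instance. Fix an $\SSS$-generic $G\subseteq P$, a second-order formula $\varphi(x,\vec X)$, and classes $\vec A\in\C[G]_\SSS$. By the definition of $\C[G]_\SSS$, there are class $\SSS$-names $\vec\Gamma\in\C$ with $\vec A=\vec\Gamma^G$.

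Since $\MM$ satisfies second-order class comprehension, \cref{lem:sopres} applies inside $\MM$. It yields the class
\[
\dot\Pi=\{\<\dot x,p>\mid\dot x\in\HS\,\land\,p\in P\,\land\,p\sforces\varphi(\dot x,\vec\Gamma)\}\in\C,
\]
and $\dot\Pi$ is an $\SSS$-name with $\1\sforces(\forall x)(x\in\dot\Pi\ifff\varphi(x,\vec\Gamma))$. Symmetry of $\dot\Pi$ comes from \cref{lem:sym} applied to $\sforces$. Hereditary symmetry holds because the domain of $\dot\Pi$ consists of elements of $\HS$. So $B:=\dot\Pi^G\in\C[G]_\SSS$.

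Next, apply the first part of \cref{cor:sotruth} to the second-order formula $(\forall x)(x\in Y\ifff\varphi(x,\vec X))$, with $Y$ assigned to $\dot\Pi$ and $\vec X$ to $\vec\Gamma$. Since $\1\in G$ and $\1$ forces this formula, we get
\[
\MMGS\models(\forall x)(x\in\dot\Pi^G\ifff\varphi(x,\vec\Gamma^G)).
\]
Thus $B=\{x\mid\varphi(x,\vec A)\}$ in $\MMGS$, and this instance of comprehension holds there.

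The only point needing care is that \cref{cor:sotruth} and \cref{lem:sopres} are applied in $\MM$ rather than $V$. That is legitimate: they are theorems of $\GB$ plus second-order class comprehension, $\MM$ satisfies these axioms, and $\sforces$ is defined over $\MM$. The truth lemma for second-order formulas also relies on genericity with respect to the second-order definable dense classes used in its proof. By comprehension in $\MM$ these classes lie in $\C$, and they are symmetric by \cref{lem:sym}, so $G$ meets them. I expect no real obstacle beyond this bookkeeping.
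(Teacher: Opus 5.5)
Your proposal is correct and follows the paper's route. The paper derives the corollary from \cref{lem:sopres} (the class $\SSS$-name $\dot\Pi$ with $\1\sforces(\forall x)(x\in\dot\Pi\ifff\varphi(x,\vec\Gamma))$) together with the second-order truth lemma \cref{cor:sotruth}, using $\1\in G$. Your remark that second-order comprehension in $\MM$ supplies the symmetric dense classes $G$ must meet is the same justification the paper gives for \cref{cor:sotruth}.
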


Moving out of the context of \cref{section:truth} and back to $\GB$, we give an additional condition on class symmetric systems, which, along with combinatorial pretameness, ensures the preservation of Separation and Collection.

\begin{definition}\label{def:strat}
A class symmetric system $\SSS$ is \emph{stratified} if for every set $a\subseteq P$, there is a symmetric set $b\subseteq P$ with $a\subseteq b$.
\end{definition}

If global choice holds, then stratification of $\SSS$ implies the existence of a $\subseteq$-increasing sequence $\<P_\alpha\mid\alpha\in\Ord>$ of symmetric subsets of~$P$ with $P=\bigcup_{\alpha\in\Ord}P_\alpha$ (the reverse always holds, as witnessed by the $P_\alpha$). This is why we called this property stratification.

For any symmetric set $b\subseteq P$, let $\HS_\alpha^{b}$ denote the set of names in $\HS_\alpha$ which have only elements of $b$ appearing\footnote{We say that $q$ appears in a name $\dot a$ if there is $\<r,\dot b>\in\dot a$ such that $q=r$ or $q$ appears in $\dot b$, inductively.} in them. Since permutations of $\PP$ preserve name rank, $(\HS_\alpha^b)^\bullet$ is then itself in $\HS$, with $\sym((\HS_\alpha^b)^\bullet)\supseteq\sym(b)\in\F$. 

\medskip

By similar arguments as in \cite[Theorem 3.1]{hks}, we can now show the following:

\begin{theorem}\label{thm:axioms}
If $\SSS$ is combinatorially pretame and stratified, then $\SSS$ is pretame.
\end{theorem}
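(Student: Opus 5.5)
By \cref{lem:atomic} and \cref{thm:fr}, combinatorial pretameness already gives forcing relations $\Sforces$. So by \cref{def:pretame} it remains to verify that $\1$ forces Separation and Collection for an arbitrary class $\SSS$-name $\dot\Gamma$. The plan follows \cite[Theorem 3.1]{hks}. First I handle Collection. Then I derive Separation from Collection together with a bounding argument, since both reduce to the same key step. That step is: given $\dot x\in\HS$, a condition $p$, and a symmetric family of requirements indexed by the set $\dom(\dot x)$, find $q\le p$ and a set-sized collection of names that suffices below $q$.

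For Collection, fix $\dot x\in\HS$ and $p$ forcing that $\dot\Gamma$ is a relation which is total on $\dot x$. For each $\dot u\in\dom(\dot x)$ consider the class
\[
D_{\dot u}=\{s\in P\mid (\exists \dot v\in\HS)(s\Sforces\<\dot u,\dot v>\in\dot\Gamma)\lor s\Sforces\dot u\notin\dot x\lor s\perp p\}.
\]
Each $D_{\dot u}$ is dense by \cref{def:sofr} and \cref{lem:str}. The sequence $\<D_{\dot u}\mid \dot u\in\dom(\dot x)>$ is symmetric: its index set is $\dom(\dot x)$, and $\dom(\dot x)^\bullet$ is fixed by $\sym(\dot x)$. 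For the witnessing group $H=\sym(\dot x)\cap\sym(\dot\Gamma)\cap\sym(p)$-ish, \cref{lem:sym} gives $\pi[D_{\dot u}]=D_{\pi(\dot u)}$. One subtlety is that $p$ need not be symmetric. I would remove the clause ``$s\perp p$'', which is not needed for density, and argue below $p$ instead. Combinatorial pretameness then yields $q\le p$ and sets $d_{\dot u}\subseteq D_{\dot u}$ predense below $q$.

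Next I use Collection in the ground model to choose a set $B\subseteq\HS$ of witnesses $\dot v$ for all pairs $(\dot u,s)$ with $s\in d_{\dot u}$ compatible with $q$. Such $B$ need not be symmetric, so I use stratification here. Let $a$ be the set of conditions appearing in names in $B$, and let $\alpha$ bound their ranks. Choose a symmetric $b\supseteq a$. Then $\dot y=(\HS^b_\alpha)^\bullet\in\HS$ contains $B$, and $\dot y$ has $\sym(b)$ in its symmetry group. By predensity, $q\Sforces(\forall u\in\dot x)(\exists v\in\dot y)\<u,v>\in\dot\Gamma$. Since $q\le p$ was obtained densely below any starting condition, \cref{lem:str}(2) gives the forced Collection statement.

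For Separation, given $\dot x$, I apply the same argument with $D_{\dot u}=\{s\mid s\Sforces\dot u\in\dot\Gamma\lor s\Sforces\dot u\notin\dot\Gamma\}$. This produces $q$ and sets $d_{\dot u}$, and I collect the relevant conditions into a symmetric set $b$ by stratification. Then I define
\[
\dot y=\{\<\dot u,s>\mid \dot u\in\dom(\dot x),\ s\in b,\ s\Sforces \dot u\in\dot x\cap\dot\Gamma\}.
\]
This is a set, and it is symmetric, with $\sym(\dot x)\cap\sym(\dot\Gamma)\cap\sym(b)$ fixing it by \cref{lem:sym}. Predensity of $d_{\dot u}\subseteq b$ below $q$ then shows $q\Sforces\dot y=\dot x\cap\dot\Gamma$.

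I expect the main obstacle to be symmetry bookkeeping. Each auxiliary name must be hereditarily symmetric even though the sets obtained from combinatorial pretameness and ground-model Collection are not. This is exactly why stratification, the passage to $\HS^b_\alpha$, and the symmetric set $b$ are needed. I also need to check that the dense classes form symmetric sequences with a single $H\in\F$.
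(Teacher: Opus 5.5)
Your overall route is the paper's: combinatorial pretameness for sequences indexed by $\dom(\dot x)$, ground-model Collection, and stratification to obtain a symmetric $b$, hence $(\HS^b_\alpha)^\bullet$ resp.\ a name built from conditions in $b$. However, two steps fail as written.

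In the Collection step, the clause ``$s\perp p$'' (or some substitute) \emph{is} needed for density. Without it, $D_{\dot u}=\{s\mid(\exists\dot v\in\HS)(s\Sforces\<\dot u,\dot v>\in\dot\Gamma)\lor s\Sforces\dot u\notin\dot x\}$ is in general only dense below $p$. For example, if $\<\dot u,\1>\in\dot x$ and some $r\perp p$ forces $\dot\Gamma=\emptyset$, then no extension of $r$ lies in $D_{\dot u}$. Since \cref{def:compt} only concerns symmetric sequences of classes dense in all of $\PP$, ``arguing below $p$'' is not available as stated. The repair is to replace $s\perp p$ by a clause invariant under $\sym(\dot x)\cap\sym(\dot\Gamma)$, such as $(\forall\dot v\in\HS)(s\Sforces\<\dot u,\dot v>\notin\dot\Gamma)$, which is what the paper does. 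More generally, adding ``no extension of $s$ lies in $D_{\dot u}$'' turns a symmetric sequence of classes dense below $p$ into a symmetric sequence of dense classes. You must then check that any $s\in d_{\dot u}$ having a common extension $t$ with $q$ and $r$, where $\<\dot u,r>\in\dot x$, satisfies the first disjunct. This holds because $t\le p$ and $t\Sforces\dot u\in\dot x$, so some extension of $t$, hence of $s$, forces $\<\dot u,\dot v>\in\dot\Gamma$ for some $\dot v\in\HS$. Only such $s$ need witnesses in $B$, and the rest of your argument then goes through.

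In the Separation step, $D_{\dot u}$ decides only $\dot u\in\dot\Gamma$, whereas $\<\dot u,s>\in\dot y$ requires $s\Sforces\dot u\in\dot x\cap\dot\Gamma$. Given $\<\dot u,r>\in\dot x$ and $t\le q,r$ forcing $\dot u\in\dot\Gamma$, predensity yields $s\in d_{\dot u}$ compatible with $t$, so $s\Sforces\dot u\in\dot\Gamma$. Nothing, however, makes $s$ force $\dot u\in\dot x$. Here is a concrete failure: take Cohen forcing with trivial $\G$, $\dot x=\{\<\check 0,r>\}$ for some $r\neq\1$, and $\dot\Gamma=\{\<\check 0,\1>\}$. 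Then $q=r$, $d_{\check 0}=\{\1\}$ and $b=\{\1\}$ are legitimate choices, giving $\dot y=\emptyset$ although $q\Sforces\check 0\in\dot x\cap\dot\Gamma$. The fix is to let $D_{\dot u}$ decide the conjunction $\dot u\in\dot x\land\dot u\in\dot\Gamma$. This is still a symmetric sequence via $\sym(\dot x)\cap\sym(\dot\Gamma)$. Now any $s\in d_{\dot u}$ compatible with a condition forcing the conjunction must itself force it, so $\<\dot u,s>\in\dot y$ and your predensity argument works. The paper's name has the same defect in the same example; it instead requires $s\le r$ for some $\<\dot u,r>\in\dot x$ together with $s\Sforces\dot u\in\dot\Gamma$. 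It is repaired the same way, by deciding the conjunction and using your definition of $\dot y$.
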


\begin{proof}
Suppose that $\SSS$ is combinatorially pretame and stratified. By the results of \cref{section:forcingtheorem}, we already know that $\SSS$ admits forcing relations. Thus, we just need to show that $\SSS$ preserves Separation and Collection.

To show Separation, suppose that $\dot z\in\HS$ and $\dot\Gamma$ is a class $\SSS$-name. For each $\dot x\in\dom(\dot z)$, let
\[
D_{\dot x}=\{q\in P\mid(q\Sforces\dot x\in\dot\Gamma)\lor(q\Sforces\dot x\notin\dot\Gamma)\}.
\]
Each $D_{\dot x}$ is dense in $\PP$, and the sequence $\<D_{\dot x}\mid\dot x\in\dom(\dot z)>$ is symmetric because $\pi[D_{\dot x}]=D_{\pi(\dot x)}$ whenever $\pi\in\sym(\dot z)\cap\sym(\dot\Gamma)$ and $\dot x\in\dom(\dot z)$ (by \cref{lem:sym}). By combinatorial pretameness of $\SSS$, the class
\[
A=\{p\in P\mid(\exists\<d_{\dot x}\mid\dot x\in\dom(\dot z)>\in V)(\forall\dot x\in\dom(\dot z))(d_{\dot x}\subseteq D_{\dot x}\text{ is predense }{\leq}p)\}
\]
is dense in $\PP$. We show that $p\Sforces(\exists u)(u=\dot z\cap\dot\Gamma)$ whenever $p\in A$; since $A$ is dense in $\PP$, this would imply that $\1\Sforces(\exists u)(u=\dot z\cap\dot\Gamma)$ by \cref{lem:str}(2). To this end, let $p\in A$ be fixed and let $\<d_{\dot x}\mid\dot x\in\dom(\dot z)>$ be a set such that $d_{\dot x}\subseteq D_{\dot x}$ and $d_{\dot x}$ is predense below $p$ whenever $\dot x\in\dom(\dot z)$. By stratification of $\SSS$, there is a symmetric set $b\subseteq P$ such that each $d_{\dot x}\subseteq b$. Then the set
\[
\dot u=\{\<\dot y,s>\mid s\in b\,\land\,(\exists r)(\<\dot y,r>\in\dot z\,\land\,s\leq r\,\land\,s\Sforces\dot y\in\dot\Gamma)\}
\]
is hereditarily symmetric and $p\Sforces\dot u=\dot z\cap\dot\Gamma$, so certainly $p\Sforces(\exists u)(u=\dot z\cap\dot\Gamma)$.

We move on to Collection. Suppose that $\dot\Gamma$ is an $\SSS$-name, $\dot z\in\HS$, and $p\in P$ such that
\[
p\Sforces(\dot\Gamma\text{ is a relation}\;\land\;(\forall x\in\dot z)(\exists y)(\<x,y>\in \dot\Gamma)).
\]
For each $\dot x\in\dom(\dot z)$, consider the class
\[
D_{\dot x}=\{s\in P\mid (\exists\dot y\in\HS)(s\Sforces\<\dot x,\dot y>\in\dot\Gamma)\lor(\forall\dot y\in\HS)(s\Sforces\<\dot x,\dot y>\notin\dot\Gamma)\}.
\]
Observe that $D_{\dot x}$ is dense in $\PP$ by \cref{def:sofr}(6): if there is $r\in P$ with no $s\leq r$ or $\dot y\in\HS$ such that $s\Sforces\<\dot x,\dot y>\in\dot\Gamma$, then $r\Sforces\<\dot x,\dot y>\notin\dot\Gamma$ for all $\dot y\in\HS$. Also, the sequence $\<D_{\dot x}\mid\dot x\in\dom(\dot z)>$ is symmetric by symmetry of $\dot z$ and $\dot\Gamma$ and \cref{lem:sym}. Note that any element $s$ of $D_{\dot x}$ below $p$ must have a $\dot y\in\HS$ with $s\Sforces\<\dot x,\dot y>\in\dot\Gamma$, otherwise we contradict that $p\Sforces(\forall x)(\exists y)(\<x,y>\in \dot\Gamma)$. As in the argument for Separation, by combinatorial pretameness of $\SSS$, the class
\[
B=\{q\in P\mid(\exists\<d_{\dot x}\mid\dot x\in\dom(\dot z)>\in V)(\forall\dot x\in\dom(\dot z))(d_{\dot x}\subseteq D_{\dot x}\text{ is predense }{\leq}q)\}
\]
is dense in $\PP$. We show that $q\Sforces(\exists u)(\forall x\in\dot z)(\exists y\in u)(\<x,y>\in\dot\Gamma)$ whenever $q\leq p$ and $q\in B$, whereby $p\Sforces(\exists u)(\forall x\in\dot z)(\exists y\in u)(\<x,y>\in\dot\Gamma)$ by \cref{lem:str}(2). So, fix $q\leq p$ in $B$ and let $\<d_{\dot x}\mid\dot x\in\dom(\dot z)>$ be a set such that each $d_{\dot x}\subseteq D_{\dot x}$ is predense below $q$. We can now use stratification of $\SSS$ and Collection to find a symmetric set $b\subseteq P$ and an ordinal $\alpha$ such that there is $\dot y\in\HS_\alpha^b$ with $s\Sforces\<\dot x,\dot y>\in\dot\Gamma$ whenever $\<\dot x,r>\in\dot z$ and $s\in d_{\dot x}$ is compatible with $r$ and $q$. But this means that
\[
q\Sforces(\forall x\in\dot z)(\exists y\in(\HS_\alpha^b)^\bullet)(\<x,y>\in\dot\Gamma),
\]
as desired (since $(\HS_\alpha^b)^\bullet\in\HS$).
\end{proof}

The above should be contrasted with a result of Matthews (\cite[Section 4.1]{matthews} or \cite[Theorem 7.13]{MATTHEWSpaper}), where he shows that one can have a pretame notion of class forcing $\PP$ with a symmetric system $\SSS=\<\PP,\G,\F>$ -- which is thus combinatorially pretame -- that fails to force Replacement, which tells us in particular that only assuming combinatorial pretameness of $\SSS$ is not sufficient for the pretameness of~$\SSS$. It also answers the second question posed in \cite[Question 7.14]{MATTHEWSpaper}, by providing stratification as a very natural combinatorial property of a symmetric system ${\SSS=\<\PP,\G,\F>}$ that ensures, together with the pretameness of $\PP$, the preservation of $\GB^-$ and thus the pretameness of $\SSS$.\footnote{Stratification of $\SSS$ however is not necessary for the pretameness of $\SSS$: It is easy to construct a non-stratified symmetric system $\SSS=\<\PP,\G\,\F>$ with $\PP$ pretame, such that $\SSS$ doesn't add new sets or classes in its symmetric extensions, hence clearly preserves $\GB^-$ (or also $\GB$ in case it holds in the ground model). For example, this can be achieved with the symmetric system that consists of the natural class product forcing to add a proper class of Cohen reals, with the permutation group $\G$ of arbitrary finitary permutations of indices, and with the trivial filter $\{\G\}$.} Unlike for the case of class forcing (where pretameness is a combinatorial requirement on class partial orders), we do not know of a combinatorial property of class symmetric systems that is equivalent to their pretameness.

\medskip

It will also be useful to know that when $\SSS$ is stratified, the sequence of predense sets witnessing combinatorial pretameness can be taken to be symmetric:

\begin{observation}\label{obs:sym}
If $\SSS$ is combinatorially pretame and stratified, then for every $p\in P$ and every symmetric sequence $\<D_i\mid i\in I>$ of dense subclasses of~$\PP$  with $I$ a set, there is $q\le p$ and a symmetric sequence $\<d_i\mid i\in I>\in V$ such that $d_i\subseteq D_i$ and $d_i$ is predense below $q$ whenever $i\in I$.
\end{observation}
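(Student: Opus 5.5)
The idea is to enlarge the predense sets given by combinatorial pretameness to sets that are uniformly definable from the $D_i$ and one fixed symmetric set. The symmetry of $\<D_i\mid i\in I>$ then passes directly to the new sequence.

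First, fix $p\in P$ and a symmetric sequence $\<D_i\mid i\in I>$ of dense subclasses of $\PP$ with $I$ a set. Let $H\in\F$ witness its symmetry, so that $I^\bullet$ is symmetric and $\pi[D_i]=D_{\pi(i)}$ for all $\pi\in H$ and $i\in I$. By combinatorial pretameness (\cref{def:compt}) there are $q\le p$ and a set $\<d_i\mid i\in I>$ with each $d_i\subseteq D_i$ predense below $q$. The set $a=\bigcup_{i\in I}d_i\subseteq P$ exists by Union and Replacement in the ground model. By stratification (\cref{def:strat}) there is a symmetric set $b\subseteq P$ with $a\subseteq b$.

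Next, define $d_i'=D_i\cap b$ for $i\in I$. By the Separation axiom of $\GB$ (intersection of the set $b$ with the class $D_i$, done uniformly using the class coding the sequence), $\<d'_i\mid i\in I>$ is a set. Clearly $d'_i\subseteq D_i$. Since $d_i\subseteq d'_i$ and $d_i$ is predense below $q$, so is $d'_i$.

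It remains to check that $\<d'_i\mid i\in I>$ is a symmetric sequence, and this is the only step needing care. Its index set is still $I$, so $I\subseteq\HS$ and $I^\bullet$ is symmetric by hypothesis. Take $H'=H\cap\sym(b)$, which lies in $\F$ since $\F$ is a filter and $b$ is symmetric. For $\pi\in H'$ and $i\in I$, since $\pi$ is a bijection of $P$,
\[
\pi[d'_i]=\pi[D_i]\cap\pi[b]=D_{\pi(i)}\cap b=d'_{\pi(i)}.
\]
This is exactly what symmetry of the sequence requires, which completes the argument.
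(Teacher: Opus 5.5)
Your proposal is correct and follows essentially the same route as the paper's proof. Both obtain $q$ and the $d_i$ from combinatorial pretameness, use stratification to find one symmetric set $b\supseteq\bigcup_{i\in I}d_i$, replace $d_i$ by $D_i\cap b$, and verify symmetry of the new sequence with the group $H\cap\sym(b)\in\F$.
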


\begin{proof}
Let $p\in P$, and let $\<D_i\mid i\in I>$ be such a sequence, say with $H\in\F$ such that $\pi[D_i]=D_{\pi(i)}$ whenever $\pi\in H$. Using combinatorial pretameness, we obtain $q\le p$ and a set $\<d_i\mid i\in I>$ such that $d_i\subseteq D_i$ and $d_i$ is predense below $q$ whenever $i\in I$. Using that $I$ is a set and $\SSS$ is stratified, let $b\subseteq P$ be a symmetric set such that $d_i\subseteq b$ for every $i\in I$. It follows that $\<D_i\cap b\mid i\in I>$ is a set, where $d_i\subseteq D_i\cap b\subseteq D_i$ and $D_i\cap b$ is predense below~$q$ whenever $i\in I$. But $\pi[D_i\cap b]=D_{\pi(i)}\cap b$ for any $\pi\in H\cap\sym(b)\in\F$, so $\<D_i\cap b\mid i\in I>$ is also a symmetric sequence.
\end{proof}

Finally, we want to show that we can have a notion of class forcing $\PP$ that is not pretame, while having a symmetric system $\SSS=\<\PP,\G,\F>$ which is pretame, but not combinatorially pretame.

\medskip

Let $\PP$ be the notion of class forcing to add a cofinal function from $\omega$ to the class of ordinals by proper initial segments, that is, conditions in $\PP$ are of the form $p\colon n\to\Ord$ for $n\in\omega$, ordered by end-extension. As shown in \cite[Section 2]{hklns}, this class forcing notion admits forcing relations, does not add new sets, clearly adds a cofinal function from $\omega$ to the class of ordinals, and is therefore not pretame.

Let $\G$ be the group of permutations $\pi=\<\pi_n\mid n<\omega>$ where, if $p\in\PP$, for every $n\in\dom(p)=\dom(\pi(p))$, $\pi(p)(n)=\pi_n(p(n))$. Let $\F=\{\{\G\}\}$.

\begin{claim}
$\SSS$ doesn't add new classes and therefore is pretame.
\end{claim}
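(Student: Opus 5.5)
The plan is to exploit the fact that the filter $\F=\{\{\G\}\}$ is trivial. Every $\SSS$-name, set or class, is then fixed by every $\pi\in\G$. I will assume, as the definition of $\G$ intends, that $\G$ contains enough permutations (e.g.\ all transpositions of ordinals in each coordinate) to act transitively on each level $P_n=\{p\in P\mid\dom(p)=n\}$. First, $\SSS$ admits forcing relations: by \cite[Section 2]{hklns} $\PP$ does, and hence so does $\SSS$, as remarked before \cref{lem:frs}.

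\textbf{Step 1: $\1$ decides everything.} Fix a first-order $\varphi$ with fixed class $\SSS$-name parameters $\vec\Gamma$ and set names $\vecdot x\in\HS$. By \cref{lem:sym}, for every $\pi\in\G$ we have $p\Sforces\varphi(\vecdot x,\vec\Gamma)$ iff $\pi(p)\Sforces\varphi(\vecdot x,\vec\Gamma)$. By transitivity of $\G$ on levels, whether $p$ forces $\varphi$ therefore depends only on $\dom(p)$.

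Suppose some $q$ of length $m$ forces $\varphi$. Then every condition of length $m$ does. Now take any $p\in P$.
\begin{itemize}
\item If $\dom(p)\ge m$, then $p$ extends $p\restriction m$, which has length $m$ and so forces $\varphi$; hence $p$ forces $\varphi$ by \cref{lem:str}(1).
\item If $\dom(p)<m$, then $p$ has an extension of length $m$, which forces $\varphi$.
\end{itemize}
Thus the conditions forcing $\varphi$ are dense, and \cref{lem:str}(2) gives $\1\Sforces\varphi$. Otherwise no condition forces $\varphi$, and then $\1\Sforces\neg\varphi$.

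\textbf{Step 2: every name is forced equal to a check name.} By induction on name rank I show that for each $\dot x\in\HS$ there is $t\in V$ with $\1\Sforces\dot x=\check t$. Namely, let
\[
t=\{t_{\dot y}\mid\dot y\in\dom(\dot x)\land\1\Sforces\dot y\in\dot x\},
\]
where $t_{\dot y}$ is obtained from the induction hypothesis (chosen via Collection, or via uniqueness under Extensionality). Equality is then forced, using Step 1 to know that each $\dot y\in\dom(\dot x)$ is decided.

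For a class $\SSS$-name $\dot\Gamma$, put
\[
T=\{t\mid\1\Sforces\check t\in\dot\Gamma\}.
\]
This is a class by first-order comprehension applied to the forcing relation. Then $\1\Sforces\dot\Gamma=\check T$, where $\check T=\{\<\check t,\1>\mid t\in T\}$. To verify this, use Step 2 for set names inside $\dot\Gamma$ together with the characterization of $p\Sforces X=Y$ via \cref{def:sofr}(4). Check names are fixed by $\G$, hence are $\SSS$-names. So in any symmetric extension the sets and classes are exactly the ground ones, and no new classes are added.

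\textbf{Step 3: pretameness, and the expected obstacle.} For Separation, given $\dot z\in\HS$ and $\dot\Gamma$, pick $t,T$ as above. Then $\1\Sforces\dot z\cap\dot\Gamma=(t\cap T)\check{}$, using ground Separation to form $t\cap T$. For Collection, if $p$ forces the hypothesis, then by Step 1 so does $\1$. The ground relation $R=\{\<a,b>\mid\1\Sforces\<\check a,\check b>\in\dot\Gamma\}$ is then total on $t$ (witnesses $\dot y$ are replaced by checks via Step 2). Ground Collection yields a set $c$, and $\check c$ witnesses the conclusion. Together with \cref{lem:pres}, this gives pretameness in the sense of \cref{def:pretame}.

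I expect the main care to be in Step 1: verifying transitivity of $\G$ on each $P_n$ from its definition, and the density argument for conditions of length below $m$. The inductive equality argument in Step 2 is routine, mirroring the set-forcing proof that check names evaluate correctly.
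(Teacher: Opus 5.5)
Your proof is correct, but it takes a different route from the paper's.

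The paper works directly on the name $\dot\Gamma$. It first cites \cite{hklns}, that $\PP$ adds no new sets, to assume $\dot\Gamma$ consists of pairs $\langle\check x,p\rangle$. It then uses $\pi(\dot\Gamma)=\dot\Gamma$, together with the existence of $\pi\in\G$ making $\pi(p)$ compatible with an arbitrary $q$. This shows that each $\check x$ occurring in $\dot\Gamma$ occurs with a predense class of conditions, so $\1\Sforces\check x\in\dot\Gamma$. Pretameness is left as an immediate consequence.

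You instead pass the homogeneity through the forcing relation via \cref{lem:sym}. This gives the stronger intermediate fact that $\1$ decides every first-order statement with $\SSS$-name parameters. From it you derive, by induction on name rank, that $\SSS$ itself adds no new sets, rather than importing this from $\PP$. You then check Separation and Collection explicitly.

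Your route is more self-contained. It fills in the two steps the paper leaves implicit: the reduction of $\dot\Gamma$ to check-name pairs, and the passage from ``no new classes'' to pretameness. The cost is length; the paper's argument is a shorter, purely combinatorial manipulation of names.

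Both arguments need the same richness of $\G$. The paper needs $\pi(p)\parallel q$ to be achievable, and you need transitivity on each level. That weaker compatibility property would already suffice for your Step 1, via the usual density argument.
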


\begin{proof}
Assume $\dot\Gamma$ is a class $\SSS$-name. Since $\PP$ doesn't add new sets, we can assume that $\dot\Gamma$ consists of elements of the form $\<\check x,p>$. Pick one such $\<\check x,p>\in\dot\Gamma$. Let $q\in\PP$ be arbitrary. Let $\pi\in\G$ be such that $\pi(p)\parallel q$. Then, also $\<\check x,\pi(p)>\in\dot\Gamma$. In total, $\dot\Gamma$ contains $\<\check x,r>$ for a predense class of conditions $r$ in $\PP$. This means however that $1\Sforces\check x\in\dot\Gamma$, yielding $\dot\Gamma$ to be forced to equal a class from the ground model.
\end{proof}

\begin{claim}
$\SSS$ is not combinatorially pretame.
\end{claim}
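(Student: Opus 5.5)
The plan is to exhibit a single condition $p$ (namely $p=\1$, the empty function) and a symmetric sequence $\<D_n\mid n\in\omega>$ of dense subclasses of $\PP$ for which no $q\le\1$ admits set-sized $d_n\subseteq D_n$ that are predense below $q$. Since the filter is trivial, $\F=\{\{\G\}\}$, symmetry here means invariance under \emph{all} of $\G$. So the sequence must be chosen so that $\pi[D_n]=D_{\pi(n)}$ for every $\pi\in\G$, and so that the index set is symmetric.

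\textbf{The sequence.} I take the index set $I=\{\check n\mid n\in\omega\}$. Every $\pi\in\G$ fixes $\1$, since $\pi$ preserves domains and $\1=\emptyset$. Hence every check name is fixed by every $\pi$. In particular $I^\bullet$ is symmetric and $\pi(\check n)=\check n$. I then let
\[
D_n=\{p\in P\mid \dom(p)>n\}.
\]
This class is clearly dense, since any condition can be end-extended to have length greater than $n$. It is also invariant under every $\pi\in\G$, because $\dom(\pi(p))=\dom(p)$. So $\pi[D_n]=D_n=D_{\pi(n)}$, and $\<D_n\mid n\in\omega>$ is a symmetric sequence of dense classes, witnessed by $H=\G$.

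\textbf{Failure for every $q$.} Suppose towards a contradiction that there are $q\le\1$ and a set $\<d_n\mid n\in\omega>$ with each $d_n\subseteq D_n$ predense below $q$. Let $m=\dom(q)$ and consider the single set $d_m$. Since it is a set, Replacement gives an ordinal $\alpha$ bounding all values $s(m)$ for $s\in d_m$; these values are defined because $\dom(s)>m$. Now let $r=q\cup\{\<m,\alpha>\}$, so that $r\le q$. Conditions are compatible exactly when one end-extends the other. Any $s\in d_m$ has $m\in\dom(s)$ and $s(m)<\alpha=r(m)$. Hence $s$ and $r$ disagree at $m$ and are incompatible. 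This contradicts the predensity of $d_m$ below $q$. Therefore $\SSS$ is not combinatorially pretame.

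The only point needing care, and the one I would double-check, is the symmetry bookkeeping: that $I\subseteq\HS$ with $I^\bullet$ fixed by all of $\G$. This reduces to $\pi(\1)=\1$ for all $\pi\in\G$, which holds because these permutations act coordinatewise and preserve domains. The combinatorial core is just the unboundedness of the possible values at coordinate $m$.
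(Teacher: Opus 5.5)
Your proof is correct and is essentially the paper's argument. You use the same sequence of dense classes, symmetric because $\F$ is trivial and indexed by $\{\check n\mid n\in\omega\}$ (your $D_n$ is the paper's $D_{\check i}$ with $i=n+1$), and the same observation that a condition $r\le q$ can take an arbitrary ordinal value at coordinate $m=\dom(q)$, so no set $d_m\subseteq D_m$ can be predense below $q$. Bounding the values $s(m)$ for $s\in d_m$ by an ordinal $\alpha$ and extending $q$ by $\<m,\alpha>$ just makes explicit the paper's remark that $d_i$ would otherwise have to be a proper class.
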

\begin{proof}
Let $I=\{\check i\mid i\in\omega\}$, and let $D_{\check i}=\{p\in\PP\mid\dom(p)\in[i,\omega)\}$ for every $i<\omega$; then $\<D_{\check i}\mid i\in\omega>$ is a symmetric sequence of dense subclasses of $\PP$. Let $q\in\PP$ be arbitrary, and assume that $\<d_i\mid i<\omega>$ is such that $d_i\subseteq D_i$ and $d_i$ is predense below $q$ whenever $i\in\omega$. Pick $i>\dom(q)$. If $r\le q$, then $r(i-1)$ can be an arbitrary ordinal, and different choices produce incompatible conditions. But $i-1\in\dom(r)$ whenever $r\in d_i$. This means that $d_i$ would have to be a proper class, which is a contradiction.
\end{proof}

\section{Preserving the powerset axiom}\label{section:powerset}

Following \cite{syhandbook}, we say that a notion of class forcing $\PP$ is \emph{tame} if it is pretame and preserves the powerset axiom. In complete analogy, we say that a class symmetric system $\SSS$ is \emph{tame} if it is pretame and preserves the powerset axiom.

\begin{theorem}\label{th:ptamestame}
If $\PP$ is tame and $\SSS=\<\PP,\G,\F>$ is a stratified class symmetric system, then $\SSS$ is tame.
\end{theorem}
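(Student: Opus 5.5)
First, since $\PP$ is tame it is in particular pretame, so it is combinatorially pretame as a symmetric system; by \cref{thm:axioms}, stratification of $\SSS$ then gives that $\SSS$ is pretame. It remains to show that $\SSS$ preserves the powerset axiom. That is, for every $\dot x\in\HS$ we need $\1\Sforces(\exists y)(\forall u)(u\subseteq\dot x\to u\in y)$. Since $\PP$ is pretame, it admits forcing relations $\Pforces$. So we may use \cref{lem:frs} to move between $\Sforces$ and $\Pforces$ relativized to $\HS^\bullet$.

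The main tool will be the following fact about tame class forcing. Given $\dot x\in V^{\PP}$ and $p\in P$, tameness yields $q\le p$ and a set $N$ of $\PP$-names with $q\Pforces(\forall u\subseteq\dot x)(\exists\dot n\in\check N)\,u=\dot n$. Equivalently, $q\Pforces\mathcal P(\dot x)\subseteq N^\bullet$. I will apply this to $\dot x\in\HS$ and then need to replace $N$ by a set of \emph{symmetric} names. For each $u\subseteq\dot x$ lying in $\HS^\bullet$, represented by some $\dot u\in\HS$, $q$ forces that $\dot u$ equals some $\dot n\in N$. Below any condition we can find $\dot u$ of name rank below a fixed $\alpha$: take the canonical name $\{\langle\dot z,s\rangle\mid \dot z\in\dom(\dot x),\,s\Sforces\dot z\in\dot u\}$, which is symmetric and of bounded rank. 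Its conditions, however, range over a proper class. Here stratification is used. The conditions $s$ that matter can be drawn from the set of conditions relevant for deciding $\dot z\in\dot n$ for $\dot n\in N$; these are predense sets given by pretameness of $\PP$. Stratification then puts all of them inside one symmetric set $b\subseteq P$, by the same argument as \cref{obs:sym} (intersecting the symmetric sequence of dense classes with $b$).

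Concretely, the plan is as follows. Fix $\dot x\in\HS$ and $p$. Consider the dense classes $D_{\dot z,\dot n}$ of conditions deciding $\dot z\in\dot n$ for $\dot z\in\dom(\dot x)$, $\dot n\in N$, and use pretameness of $\PP$ to find $q'\le q$ and set-sized predense pieces. Then choose a symmetric $b$ containing all of these pieces together with everything appearing in $\dot x$. Next, show that every $\SSS$-name for a subset of $\dot x$ is forced by $q'$ to equal a name in $\HS^b_\alpha$ of the form $\{\langle\dot z,s\rangle\mid \dot z\in\dom(\dot x), s\in b, s\Sforces\dot z\in\dot u\}$. This name is symmetric with $\sym\supseteq\sym(\dot u)\cap\sym(b)\cap\sym(\dot x)$ by \cref{lem:sym}. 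Then $q'\Sforces\mathcal P(\dot x)\subseteq(\HS_\alpha^b)^\bullet$, which is in $\HS$. Since such $q'$ are dense, \cref{lem:str}(2) gives the powerset axiom forced by $\1$. To build the replacing name, use density together with the fact that $b$ contains predense sets deciding $\dot z\in\dot n$, where $\dot n$ is the name in $N$ that $\dot u$ is forced equal to below a further extension. Then check that $\dot u$ and the replacing name agree on membership of each $\dot z$ in the generic sense.

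The main obstacle is the middle step: ensuring the replacing name built only from conditions in $b$ is really forced equal to $\dot u$. The issue is that $\dot u$ is only forced equal to some $\dot n\in N$ below conditions that depend on $\dot u$, and those conditions need not lie in $b$. My first attempt is to use the $\PP$-genericity formulation: use \cref{lem:frs} and argue with $\PP$-generics, or with the predense sets $d_{\dot z,\dot n}\subseteq b$ deciding each relevant statement. These give $q'\Pforces \dot n=\{\langle\dot z,s\rangle\mid s\in d_{\dot z,\dot n},\, s\Pforces\dot z\in\dot n\}$. The resulting name is symmetric only if $\dot n$ is, however. So I would instead fix a symmetric sequence of such predense sets indexed by $(\dot z,\dot u)$ over representatives $\dot u$ ranging over a set. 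The set of representatives is obtained via Collection applied to $N$.
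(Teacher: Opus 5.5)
There is a genuine gap, at exactly the step you flag as the main obstacle. Your reduction to the powerset axiom via \cref{thm:axioms} is fine, and so is the plan to transfer a $\Pforces$-statement relativized to $\HS^\bullet$ through \cref{lem:frs}. The trouble is in the middle. In your concrete plan, $b$ is fixed first, containing predense sets that decide $\dot z\in\dot n$ for $\dot n\in N$. You then want $q'$ to force $\dot u\cap\dot x=\{\langle\dot z,s\rangle\mid\dot z\in\dom(\dot x),\ s\in b,\ s\Sforces\dot z\in\dot u\}$ for \emph{every} $\dot u\in\HS$. But a condition in $b$ that decides $\dot z\in\dot n$ decides $\dot z\in\dot u$ only if it lies below a condition forcing $\dot u=\dot n$. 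Those conditions depend on $\dot u$, which ranges over a proper class, so they need not lie in the set $b$. For instance, a $\dot u$ that equals $\dot n_0$ or $\dot n_1$ according to a coordinate no condition in $b$ mentions gives an empty restricted name. Your remedy, Collection applied to $N$, does not close this as stated. Ground-model Collection over the set $N$ does not by itself give a set of symmetric names that each $\dot n\in N$ is densely forced (below $q'$) to equal. The reason is that the symmetric name $\dot n$ equals varies with the condition, and there are class many conditions. So some form of pretameness of $\PP$ has to enter at this point, and your proposal never supplies it.

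The missing idea is to apply the fact that $\PP$ preserves Collection (equivalently, is pretame) to the right dense classes. These are the classes deciding \emph{which} symmetric name a given $\dot n\in N$ equals, not those deciding $\dot z\in\dot n$. The paper forms the $\PP$-name $\dot R$ of all $\langle\langle\dot n,\dot u\rangle^{\check{}},r\rangle$ with $\dot u\in\HS$ and either $r\Pforces\dot n=\dot u$, or $\dot u=\check\emptyset$ and $r\Pforces\dot n\notin\HS^\bullet$. Then $\1\Pforces(\forall x\in\check N)(\exists y)(\langle x,y\rangle\in\dot R)$, so Collection in the $\PP$-extension gives $p_1\le q$ and $\dot c$ with $p_1\Pforces(\forall x\in\check N)(\exists y\in\dot c)(\langle x,y\rangle\in\dot R)\land\dot c\subseteq\check\HS$. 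If $\alpha$ is the name rank of $\dot c$, every representative lies in $V_\alpha\cap\HS$. Equivalently, you could apply pretameness of $\PP$ to $E_{\dot n}=\{r\mid r\Pforces\dot n\notin\HS^\bullet\lor(\exists\dot u\in\HS)(r\Pforces\dot n=\dot u)\}$ for $\dot n\in N$, and then use ground-model Collection over the resulting set-sized predense pieces. Once you have such a set of representatives, neither canonical names nor predense sets deciding $\dot z\in\dot n$ are needed. Stratification gives a symmetric $b\supseteq V_\alpha\cap P$, so $V_\alpha\cap\HS\subseteq\HS^b_\alpha$. Hence $p_1\Pforces\mathcal P(\dot x)\cap\HS^\bullet\subseteq(\HS^b_\alpha)^\bullet$, and \cref{lem:frs} yields $p_1\Sforces\mathcal P(\dot x)\subseteq(\HS^b_\alpha)^\bullet$.
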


\begin{proof}
We already know that $\SSS$ admits forcing relations~$\Sforces$ and preserves $\GB^-$. We also know that $\PP$ admits forcing relations $\Pforces$ and preserves $\GB$.

Let $p\in P$ and $\dot a\in\HS$. We want to find a condition below $p$ which forces in $\SSS$ that a set covering the powerset of $\dot a$ exists. Since $\PP$ is tame, there is $p_0\le p$ and $\dot b\in V^{\PP}$ such that $p_0\Pforces\dot b=\mathcal P(\dot a)$. Let $B=\dom(\dot b)$. Clearly, $p_0\Pforces\mathcal P(\dot a)\subseteq B^\bullet$. Now consider the $\PP$-name
\[
\dot R=\{\<\<\dot x,\dot y>^{\check{}},q>\mid\dot x\in V^{\PP}\land\dot y\in\HS\land q\in P\land(q\Pforces\dot x=\dot y\;\lor\;(\dot y=\check\emptyset\land q\Pforces\dot x\notin\HS^\bullet))\}.
\]
It is clear that $\1\Pforces(\forall x\in\check B)(\exists y)(\<x,y>\in\dot R)$, so since $\PP$ preserves Collection, there is $p_1\leq p_0$ and $\dot c\in V^{\PP}$ such that
\[
p_1\Pforces((\forall x\in \check B)(\exists y\in\dot c)(\<x,y>\in\dot R)\;\land\;\dot c\subseteq\check\HS).
\]
Let $\alpha$ be the name rank of $\dot c$. Then $\1\Pforces\check d\notin\dot c$ whenever the rank of $d$ is higher than $\alpha$, so $\1\Pforces\dot c\subseteq(V_\alpha\cap\HS)^{\check{}}$. Using stratification of $\SSS$, let $b\subseteq P$ be a symmetric set with $V_\alpha\cap P\subseteq b$, so certainly $V_\alpha\cap\HS\subseteq\HS_\alpha^b$. Then $\1\Pforces\dot c\subseteq(\HS_\alpha^b)^{\check{}}$, and our construction easily yields that
\[
p_1\Pforces B^\bullet\cap\HS^\bullet\subseteq(\HS_\alpha^b)^\bullet.
\]
Together with our earlier observation, this implies that
\[
p_1\Pforces\mathcal P(\dot a)\cap\HS^\bullet\subseteq(\HS_\alpha^b)^\bullet,
\]
or spelled out more completely, that
\[
p_1\Pforces(\forall x\in\HS^\bullet)(x\subseteq\dot a\,\to\,x\in(\HS_\alpha^b)^\bullet).
\]
Since the above formula that is forced by $p_1$ is the relativization to $\HS^\bullet$ of the formula $\mathcal P(\dot a)\subseteq(\HS_\alpha^b)^\bullet$, by \cref{lem:frs}, this means that $p_1\Sforces\mathcal P(\dot a)\subseteq(\HS_\alpha^b)^\bullet$.
\end{proof}

Let us show that the above does not generally hold without the stratification assumption.

\begin{proposition}
There is a tame notion of class forcing $\PP$ and a class symmetric system $\SSS=\<\PP,\G,\F>$ such that the powerset axiom is not preserved by $\SSS$. 
\end{proposition}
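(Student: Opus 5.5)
We want a tame $\PP$ together with a class symmetric system $\SSS=\<\PP,\G,\F>$ that is \emph{not} stratified; by \cref{th:ptamestame}, non-stratification is forced on us. Since $\PP$ is tame, $\mathcal P(\omega)^{V[G]}$ is a set in the full extension. So the only way powerset can fail in $\MMGS$ is this: every real of $V[G]$, or at least class many ``names for'' reals, is individually symmetric, but no symmetric set-sized name covers them all. The plan is to arrange that each symmetric set needs a support of set size, while collecting all the reals would need a support of proper class size.

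\emph{Construction.} I would take $\PP$ to be a class lottery sum of copies of Cohen forcing $\mathrm{Add}(\omega,1)$ over a common top $\1$. Concretely, the conditions are $\1$ together with pairs $\<p,\alpha>$ for $\alpha\in\Ord$ and $p\in\mathrm{Add}(\omega,1)$, where $\<p,\alpha>\le\<q,\beta>$ iff $\alpha=\beta$ and $p\le q$. More generally, the order can be varied so that one copy is selected and inside it a Cohen real (or $\omega$ many Cohen reals) is added.
\begin{itemize}
\item \emph{Tameness of $\PP$.} Below any $\<p,\alpha>$ the forcing is the set forcing $\mathrm{Add}(\omega,1)$. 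Hence $\PP$ is pretame, since any dense class can be thinned below $\<p,\alpha>$ to a set. It also preserves the powerset axiom, since every extension is a Cohen extension.
\item \emph{The group $\G$.} I would take automorphisms parametrized by set-supported functions $f$ assigning to each $\alpha$ in a set $\dom(f)$ an automorphism of $\mathrm{Add}(\omega,1)$, for instance finitely many bit-flips, acting coordinatewise on copy $\alpha$. If needed I would add permutations of the ordinal indices that are supported on a set.
\item \emph{The filter $\F$.} I would generate $\F$ by the groups $\fix(E)=\{\pi_f\mid \dom(f)\cap E=\emptyset\}$ for sets $E$ of ordinals. Normality is immediate, because conjugating $\fix(E)$ gives $\fix$ of the image of $E$, which is again a set.
\end{itemize}
Non-stratification is visible here: a symmetric set $b\subseteq P$ can only mention set many copies.

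\emph{Key steps.}
\begin{enumerate}
\item Verify that $\SSS$ is a class symmetric system in the sense of \cref{def:system}, and that it admits forcing relations. The latter follows because $\PP$ does, by the remarks after \cref{def:Pfr}.
\item Exhibit, for each $\alpha$, a hereditarily symmetric name $\dot c_\alpha$ with support $\{\alpha\}$ that names a subset of $\omega$. The sets $\dot c_\alpha^G$ should be pairwise distinct as $\alpha$ varies, or at least class many of them should differ in a way detectable by a symmetric name with small support. If the plain lottery sum makes all but one $\dot c_\alpha^G$ trivial, I would switch to a modification in which each copy contributes a real definable from $G$. One option is a class product with finite support in which only a set of coordinates is active, with the remaining coordinates collapsed into a single Cohen real via bijections; then $\PP$ stays equivalent to $\mathrm{Add}(\omega,1)$, but the indexing by ordinals is visible to $\G$.
\item Show that $\1\Sforces\neg(\exists y)(\forall x)(x\subseteq\check\omega\to x\in y)$. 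Given $\dot y\in\HS$ with support $E$, pick $\alpha\notin E$. Take $\pi\in\fix(E)$ moving $\dot c_\alpha$ to a name forced to be distinct from it. Then use the symmetry lemma (\cref{lem:sym}) to argue, as in the Cohen model, that no condition can force $\dot c_\alpha\in\dot y$.
\end{enumerate}

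\emph{Main obstacle.} The crux is step 2 combined with step 3: making $\PP$ tame forces the full extension to have only a set of reals. So class many ``distinct'' symmetric reals must actually be equal in $V[G]$ or come from a set. The step-3 argument has to exploit that a set-sized support cannot decide membership for every index. Tuning the order so that $\PP$ remains tame while $\G$ sees a proper class of independent coordinates is where I expect the real work, and it is where I would first try the ``set many active coordinates, identified by bijections'' variant.
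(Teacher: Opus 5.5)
Your proposal has a genuine gap, both in the construction and in step 3. First, the lottery system is not a counterexample, and your claim that non-stratification is visible there is mistaken. Stratification asks that every \emph{set} $a\subseteq P$ be covered by a symmetric set. Such an $a$ mentions only a set $E$ of copies, so $\{\1\}$ together with all conditions from copies in $E$ is a symmetric superset of $a$. It is invariant under all coordinatewise bit-flips, and also under $\fix(E)$ if you add index permutations and let $\fix(E)$ fix the indices in $E$. So your system is stratified, and \cref{th:ptamestame} already shows that it preserves powerset. You can also see this directly. Only one copy $\alpha_0$ is ever active, so below $\<p,\alpha_0>$ the set of all nice names for subsets of $\omega$ built from copy $\alpha_0$ lies in $\HS$ with support $\{\alpha_0\}$, and it names $\mathcal P(\omega)$. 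This works whether supports are sets or finite. Your fallback, as described (everything collapsed into a single Cohen real), has the same defect: one coordinate carries all the information about the generic, so a support of size one suffices.

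Second, step 3 uses an argument that cannot refute powerset. The Cohen-model symmetry argument only shows that a name $\dot y$ with support $E$ cannot \emph{separate} $\dot c_\alpha$ from $\pi(\dot c_\alpha)$ for $\pi\in\fix(E)$: from $q\Sforces\dot c_\alpha\in\dot y$ you get $\pi(q)\Sforces\pi(\dot c_\alpha)\in\dot y$. That is no contradiction, since in the basic Cohen model $\mathcal P(\omega)$ exists and contains all these images. The contradiction has to come from size. Since $\PP$ is tame, the reals of the full extension form a set, so the proper class cannot consist of distinct reals. It must consist of conditions that any symmetric name for $\mathcal P(\omega)$ is forced to mention. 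This is what the paper does. Its $\PP$ adds $\omega$ many Cohen reals, but each bit $(\alpha,n)$ is recorded redundantly at every ordinal $\gamma$: conditions are finite partial $p\colon\omega\times\omega\times\Ord\to 2$ with $p(\alpha,n,\gamma)=p(\alpha,n,\gamma')$. Each $\pi=\<\pi_\alpha\mid\alpha<\omega>$ permutes the $\gamma$-coordinate separately for each $\alpha$, moving only set many ordinals, and $\F$ is generated by $\fix(e)$ for \emph{finite} $e\subseteq\omega$. Then $\pi(\dot c_\alpha)$ is forced \emph{equal} to $\dot c_\alpha$. Now suppose $\dot a$ with support $e$ names $\mathcal P(\omega)$ and $\alpha\notin e$. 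Some condition $q$ appearing in $\dot a$ must carry information on coordinate $\alpha$. Otherwise $\dot a$ is a $\QQ$-name for the subforcing $\QQ$ of conditions trivial on coordinate $\alpha$, and $c_\alpha$ is Cohen generic over the $\QQ$-extension. But then the proper class $\{\pi(q)\mid\pi\in\fix(e)\}$ appears in $\dot a$, contradicting that $\dot a$ is a set. Your plan is missing both ingredients: proper-class orbits of conditions under the groups $\fix(e)$, and infinitely many reals with finite supports, so that some needed coordinate falls outside every support.
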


\begin{proof}
Let $\PP$ be the notion of class forcing in which conditions are finite partial functions $p$ from $\omega\times\omega\times\Ord$ to $2$, ordered by end-extension, with the additional property that whenever $\gamma,\gamma'\in\Ord$ and $\alpha,n\in\omega$ are such that both $\<\alpha,n,\gamma>$ and $\<\alpha,n,\gamma'>$ are in the domain of $p$, then \[p(\alpha,n,\gamma)=p(\alpha,n,\gamma').\]
In effect, $\PP$ just adds countably many Cohen reals, indexed by $\alpha<\omega$, so it is clearly tame.
Now, let $\G$ be the group of permutations $\pi=\<\pi_\alpha\mid\alpha<\omega>$ with each $\pi_\alpha$ a permutation of $\Ord$ moving only set-many ordinals. If $p\in\PP$, we let $\pi(p)(\alpha,n,\pi_\alpha(\gamma))=p(\alpha,n,\gamma)$
for all $\alpha,n<\omega$ and $\gamma\in\Ord$. Let $\F$ be the filter of subgroups of $\G$ given by subgroups of the form $\fix(e)$ for finite $e\subseteq\omega$, consisting of all permutations $\pi$ such that $\pi_\alpha=\id$ whenever $\alpha\in e$. As usual, if $\dot x\in\HS$ and $\fix(e)\subseteq\sym(\dot x)$, we call $e$ a support of $\dot x$. For every $\alpha<\omega$, the $\alpha^{\textrm{th}}$ Cohen real added by $\PP$ has a symmetric name $\dot c_\alpha$ with support $\{\alpha\}$. Assume for a contradiction that $\dot a\in\HS$ is a name for the powerset of $\omega$, as forced (with respect to $\SSS$) by a condition $p\in\PP$, and that $\dot a$ has finite support $e\subseteq\omega\times\Ord$. Let $\alpha\in\omega\setminus e$. Since $p\Sforces\dot c_\alpha\in\dot a$, there has to be a condition $q$ appearing in $\dot a$ which has nontrivial information on its $\alpha^{\textrm{th}}$ coordinate: otherwise, let $\QQ$ be the subforcing of $\PP$ consisting only of conditions with trivial information on their $\alpha^{\textrm{th}}$ coordinate. Then it is forced (by $\PP$) that $\dot a$ is contained already in the $\QQ$-generic extension, which however does not contain $\dot c_\alpha$, for $\dot c_\alpha$ is Cohen generic over this extension, and this is clearly a contradiction, as also $p\Pforces\dot c_\alpha\in\dot a$. But then, also $\pi(q)$ appears in~$\dot a$ whenever $\pi\in\fix(e)$. However $\{\pi(q)\mid\pi\in\fix(e)\}$ is a proper class, since for $\pi\in\fix(e)$, $\pi_\alpha$ can permute set-size subsets of $\Ord$ in arbitrary ways. This clearly contradicts that $\dot a\in\HS$ is set-sized.
\end{proof}

The class forcing notion $\PP$ that we use as a counterexample in the above proposition is antisymmetric, however not separative. It is easy to modify $\PP$ so that it still witnesses our above proposition and is separative, however not antisymmetric anymore. We do not know of a class forcing notion that witnesses the above proposition and is similarly antisymmetric and separative.

\section{Final remarks and open questions}

We end the paper with some observations and open questions. The following remains open:

\begin{question}
Is there a combinatorial characterization of the pretameness of class symmetric systems?
\end{question}

We would like to observe that there are important class symmetric systems that necessarily use class forcing notions which are not tame. In order to formalize this, we again work in an ambient universe $V$ with a transitive model $\MM$ of $\GB$. We may even assume, for the sake of simplicity, that $V\models\ZFC$ and $\MM$ is countable in~$V$.\footnote{Note that this assumption has non-trivial extra consistency strength. The assumption of a countable transitive model $\MM$ satisfying each axiom of $\ZFC$ individually, i.e., expressed as a scheme, does not increase consistency strength by the reflection theorem. Even in the context of $\GB$, a theory expressing that $\MM$ is an inner model of $\GB$ and stating the existence of a generic over~$\MM$ for a class forcing that does admit forcing relations is still equiconsistent with $\GB$ alone, since the forcing relation interprets this theory and $\GB$ proves that no contradiction is forced. However, it seems unclear how to conceive of a theory, of strength not exceeding $\GB$, that meaningfully formalizes constructing a class symmetric extension of a model of $\GB$ by use of a generic. This is unclear specifically in the case that the relevant class symmetric system admits forcing relations, but the underlying class forcing notion does not.}

\begin{definition}
We say that two class symmetric systems $\SSS$ and $\SSS'$ of $\MM$ are \emph{weakly equivalent} if they produce the same set of symmetric extensions (in $V$).
\end{definition}

Let us consider the question whether, if $\SSS=\<\PP,\G,\F>$ is a tame class symmetric system in~$\MM$, there is a weakly equivalent symmetric system $\SSS'=\<\PP',\G',\F'>$ in $\MM$ with a tame notion of class forcing $\PP$. This has a negative answer, letting $\SSS$ be the class symmetric system to obtain Gitik's model~\cite{gitik1980} in which every uncountable cardinal is singular. If a weakly equivalent $\SSS'$ with a tame notion of class forcing~$\PP'$ existed, let $G$ be $\PP'$-generic over $\MM$. Then, $\MM[G]$ has a submodel that is an $\SSS$-generic extension of $\MM$, in which every uncountable cardinal is singular. By upward absoluteness of this property, this is clearly a contradiction. Another example that does not use large cardinals is the Morris model (see \cite{Morris}, and \cite{Karagila2020} for a modern exposition).

\medskip

Regarding pretameness, the analogous question seems to be open:

\begin{question}
If $\SSS$ is pretame in a model of $\GB$, does this model have a weakly equivalent symmetric system $\SSS'=\<\PP,\G,\F>$ with $\PP$ pretame?
\end{question}

A positive answer to the following question would imply that stratification is, in a certain sense, essential for tame symmetric systems:

\begin{question}
If $\SSS$ is a tame symmetric system in a model of $\GB$, does this model have a weakly equivalent tame symmetric system $\SSS'$ that is stratified?\footnote{It is automatic that $\SSS'$ preserves $\GB$, but it doesn't seem to be automatic that forcing relations for $\SSS'$ exist. This is why we also require $\SSS'$ to be tame.}
\end{question}

Regarding tame notions of class forcing, the following variant of the above question imposes itself:

\begin{question}
If $\SSS=\<\PP,\G,\F>$ is tame and $\PP$ is tame, is there $\SSS'=\<\PP',\G',\F'>$ that is weakly equivalent to $\SSS$ and stratified with $\PP$ tame?\footnote{In this case, $\SSS'$ will be tame by \cref{th:ptamestame}.}
\end{question}

In the context of set-sized symmetric systems $\SSS=\<\PP,\G,\F>$, Grigorieff (see \cite{Grigorieff1975}, also \cite{symext}) showed that whenever $G$ is $\mathbb{P}$-generic over $\MM$, then $G$ is also (set-)generic over $\MMGS$. Trying to adapt the proof to class symmetric systems, one  should probably require $\SSS$ to admit forcing relations. The bigger obstacle however appears to be that when directly adapting Grigorieff's argument, one would obtain a forcing notion with conditions that are classes themselves. Therefore, the following question arises.

\begin{question}
Let $\SSS = \<\PP,\G,\F>$ be a class symmetric system in a model $\MM$ of $\GB$, admitting forcing relations. Let $G$ be $\PP$-generic over $\MM$. Can $G$ be added generically over $\MMGS$ via a notion of class forcing? 
\end{question}

\subsection*{Author Contribution}
The results of this paper have been achieved as a joint effort of its authors.

\subsection*{Acknowledgements}
This research was funded in whole or in part by the Austrian Science Fund (FWF) [10.55776/ESP5711024]; and the Mathematical Institute, University of Oxford. The authors would like to thank Asaf Karagila for some helpful discussions and remarks on the topics of this paper. For open access purposes, the authors have applied a CC BY public copyright license to any author-accepted manuscript version arising from this submission. No data are associated with this article.


\end{document}